\newcommand{\R}{\mathbb{R}}
\newcommand{\Z}{\mathbb{Z}}
\newcommand{\N}{\mathbb{N}}
\newcommand{\vc}[1]{\boldsymbol{#1}}
\newcommand{\ind}{\mathds{1}} 	
\DeclarePairedDelimiter\abs{\lvert}{\rvert}
\DeclarePairedDelimiter\norm{\lVert}{\rVert}
\DeclarePairedDelimiter\floor{\lfloor}{\rfloor}
\DeclareMathOperator{\supp}{supp}
\DeclareMathOperator{\BigO}{\mathcal{O}}
\let\dim\relax
\DeclareMathOperator{\dim}{dim_{\mathcal{H}}}
\DeclareFontFamily{U}{mathx}{\hyphenchar\font45}
\DeclareFontShape{U}{mathx}{m}{n}{
      <5> <6> <7> <8> <9> <10>
      <10.95> <12> <14.4> <17.28> <20.74> <24.88>
      mathx10
      }{}
\DeclareSymbolFont{mathx}{U}{mathx}{m}{n}
\DeclareMathAccent{\widecheck}{0}{mathx}{"71}
\DeclareMathAccent{\wideparen}{0}{mathx}{"75}
\newtheorem{thm}{Theorem}[section]
\newtheorem{cor}[thm]{Corollary}
\newtheorem{prop}[thm]{Proposition}
\newtheorem{lem}[thm]{Lemma}
\newtheorem{rmk}[thm]{Remark}
\newtheorem*{rmk*}{Remark}
\newtheorem*{acknowledgements*}{Acknowledgements}
\theoremstyle{definition}
\definecolor{darkGreen}{RGB}{14,146,0}
\title{Counterexamples for the fractal Schr\"odinger convergence problem 
with an Intermediate Space Trick}
\author[D. Eceizabarrena]{Daniel Eceizabarrena}
\address[D. Eceizabarrena]{Department of Mathematics and Statistics,
		University of Massachusetts Amherst,
		Amherst MA 01003, United States}
\email{eceizabarrena@math.umass.edu}
\author[F. Ponce-Vanegas]{Felipe Ponce-Vanegas}
\address[F. Ponce-Vanegas]{BCAM -- Basque Center for Applied Mathematics,
		Mazarredo 14, E48009 Bilbao, Basque Country -- Spain}
\email{fponce@bcamath.org}
\subjclass[2020]{Primary 35J10; Secondary 42B37}
\keywords{
	Carleson's problem, 
	Bourgain's counterexample,
	intermediate space trick,
	Hausdorff dimension,
	Mass Transference Principle}
\date{\today}
\begin{document}

\begin{abstract}
We construct counterexamples 
for the fractal Schr\"odinger convergence problem 
by combining
a fractal extension of Bourgain's counterexample and
the intermediate space trick of Du--Kim--Wang--Zhang. 
We confirm that the same 
regularity as Du's
counterexamples for weighted $L^2$ restriction
estimates is achieved for the convergence problem.
To do so, we need to construct the set of divergence explicitly 
and compute its Hausdorff dimension, 
for which we use the Mass Transference Principle, 
a technique originated from Diophantine approximation. 
\end{abstract}

\maketitle



\section{Introduction}

We study the convergence problem of the solutions of the 
Schrödinger equation to the initial datum in its fractal version. That is, if 
$u = e^{it\Delta}f$ is the solution to
\begin{equation}
\left\{
\begin{array}{l}
u_t = -\frac{ i}{2\pi} \, \Delta u, \\
u(x,0) = f(x),
\end{array}
\right.
\end{equation}
with $f \in H^s(\R^n)$, we look for the minimal Sobolev regularity $s$
so that 
\begin{equation}
\lim_{t \to 0} e^{it\Delta} f(x) = f(x) \quad \text{ for } \mathcal H^\alpha \text{-almost all } x \in \mathbb R^n, \quad \forall f \in H^s(\mathbb R^n),
\end{equation}
where $0 \leq \alpha \leq n$ and $\mathcal H^\alpha$ is the $\alpha$-Hausdorff measure.
In other words, we look for the exponent
\begin{equation}
s_c(\alpha) = 
	\inf\left\{s \ge 0 \, \, \mid \, \,  \lim_{t \to 0} e^{it\Delta}f = f\enspace 
		\mathcal{H}^\alpha\textrm{-a.e., } \quad \forall  f\in H^s(\R^n)\right\}.
\end{equation}
The case $\alpha = n$ for the Lebesgue measure is the original problem, 
proposed by Carleson in \cite{carleson1980}. 
The fractal refinement we here consider was studied later by
Sjögren and Sjölin \cite{sjogren_etal1989}, and by 
Barceló \textit{et al.} \cite{bercelo_etal2011}.

This problem, as well as variations of it, 
has received much attention over the past decades 
\cite{sjolin1987, vega1988, sjogren1989, moyua1996, lee2006, 
choLeeVargas2012, bourgain2013, choLee2014, wangZhang2019, lucaRogers2019,
pierce2020, eceizabarrena2020, Pierce2021, LiWangYan2021, compaan_etal2021, LiZhao2021}.
We discuss here with more detail the contributions to the fractal problem.

Concerning the Lebesgue case $\alpha = n$,
Carleson himself proved that $s_c(n) \le 1/4$ when $n = 1$.
This was confirmed to be optimal by Dahlberg and Kenig \cite{dahlberg_etal1982}, who provided a counterexample that implies $s_c(n) \ge 1/4$ in every dimension. 
After the  contribution of many authors, 
Bourgain's counterexample \cite{Bourgain2016}
and the positive results of Du, Guth and Li in $n=2$ \cite{du_etal2017},
and Du and Zhang in $n \geq 3$
\cite{du_etal2018} 
determined that the correct exponent is 
\begin{equation}
s_c(n) = \frac{n}{2(n+1)}.
\end{equation}

A preliminary result for the fractal case $\alpha < n$ is that of \v{Z}ubrini\'c \cite{zubrinic2002}, 
who showed that a function $f\in H^s(\R^n)$ with 
$s < (n - \alpha)/2$ need not be well-defined in a set of Hausdorff dimension $\alpha$.
In that case, since the initial datum itself is not well-defined,
we directly get
\begin{equation}\label{eq:Zubrinic}
s_c(\alpha) \ge (n - \alpha)/2, \qquad \text{ for all } \alpha \in [0,n].
\end{equation}

In the range $\alpha \le n/2$ the problem was solved by 
Barceló \textit{et al.} \cite[Proposition~3.1]{bercelo_etal2011}, who proved that
$s_c(\alpha) \le (n - \alpha)/2$ and thus showed that
\v{Z}ubrini\'c's bound \eqref{eq:Zubrinic} is best possible. 

Thus, we only need to focus on the case $\alpha > n/2$. 
In \cite[Theorem~2.3]{du_etal2018}, Du and Zhang proved that
\begin{equation}
s_c(\alpha) \le 
	\frac{n}{2(n + 1)} + \frac{n}{2(n + 1)}(n - \alpha), \qquad
\textrm{for } (n + 1)/2 \le \alpha \le n.
\end{equation}
The proof goes through the standard argument of using the maximal function, and
then this is reduced to the bound
\begin{equation} \label{eq:linearized_maximal_fun}
\norm{e^{it\Delta} f}_{L^2(w)} \le 
	C_\epsilon R^{\frac{\alpha}{2(n + 1)} + \epsilon}\norm{f}_{L^2(\R^n)}, \qquad
\textrm{for all } \epsilon > 0 \textrm{ and } R \ge 1,
\end{equation}
where $\supp\hat{f} \subset \{\xi \in \R^n \, : \,  \abs{\xi} \simeq 1\}$,
and $w \ge 0$ is a weight function that satisfies the following properties:
\begin{enumerate}[(i)]
\item $w$ is a sum of functions $\ind_Q$, where
$\{Q\}$ is a collection of unit cubes in a tiling of $\R^{n + 1}$;
\item $\supp w \subset B(0, R) = \{x \in \R^{n + 1} \mid \abs{x} \le R\}$;
\item $\int_{\R^n} w = R^\alpha$; 
\item $\int_{B_r(x)} w \le C_w r^\alpha$ for all $x \in \R^{n + 1}$ and $r > 0$.
\end{enumerate}

On the side of counterexamples, 
the best result we have so far is
\begin{equation} \label{eq:BestLowerBound}
s_c(\alpha) \ge 
	\frac{n}{2(n + 1)} + \frac{n - 1}{2(n + 1)}(n - \alpha), \qquad
\textrm{for } n/2 \le \alpha \le n.
\end{equation}
Luc\`a and Rogers \cite{luca_etal2019} proved this for 
$(3n + 1)/4 \le \alpha \le n$, for which they constructed counterexamples based on ergodic arguments, 
different from Bourgain's one in \cite{Bourgain2016} that is based on number theoretic arguments. 
Luc\`a and the second author adapted Bourgain's example 
to the fractal setting in \cite{LucaPonceVanegas2021} to prove \eqref{eq:BestLowerBound} in the whole range.

In this paper we construct further counterexamples 
that improve \eqref{eq:BestLowerBound}.
Defining   
\begin{gather}
s_{3,m}(\alpha) = \frac{n}{2(n-m+1)} + \frac{n-m-1}{2(n-m+1)}(n-\alpha), \\
s_{4,m}(\alpha) = \frac{n-m}{2(n-m+1)} + \frac{n-m}{2(n-m+1)}(n-\alpha), \\
s_{5,m}(\alpha) = \frac12 + \frac{n-m-2}{2(n-m-1)}(n-\alpha),
\end{gather}
and
\begin{equation}
\beta_{m} = n - (m-1) \,  \frac{n-m-1}{n-m-3},
\end{equation}
we prove the following theorem.
\begin{thm}\label{thm:Main_Theorem}
Let $m_0 = \floor{(n - 1)/3}$ and  $m_1 = \floor{n/2 - 1}$
and $0 \leq m \leq m_1$.
Then, 
\begin{itemize}
	\item When $n=2, 3$, then
	\begin{equation}
	s_c(\alpha) \geq s_{3,0}(\alpha), \qquad n/2 \leq \alpha \leq n.
	\end{equation}
	
	\item When $n=4,5,6,7$, then 
	\begin{equation}
	s_c(\alpha) \geq 
	\left\{ \begin{array}{ll}
	s_{3,m_0}(\alpha), & n/2 \leq \alpha \leq n - m_0, \\
	\max\{  s_{3,m-1}(\alpha), s_{4,m}(\alpha)   \}, & n - m \leq \alpha \leq n-m+1 \quad \text{ and } m=1, \ldots, m_0.
	\end{array}
	\right.
	\end{equation}
	
	\item When $n = 8, 9, 10, 11, 13$, then
	\begin{equation}
	s_c(\alpha) \geq 
	\left\{ \begin{array}{ll}
	s_{3,m_0+1}(\alpha)   , & n/2 \leq \alpha \leq \beta_{m_0+1}, \\
	\max\{  s_{3,m_0}(\alpha), s_{5,m_0+1}(\alpha)  \}, & \beta_{m_0+1} \leq \alpha \leq n-m_0,  \\
	\max\{  s_{3,m-1}(\alpha), s_{4,m}(\alpha)   \}, & n - m \leq \alpha \leq n-m+1 \quad \text{ and } m=1, \ldots, m_0.
	\end{array}
	\right.
	\end{equation}
	
	\item When $n \geq 12$ and $n \in 2\N$, then
	\begin{equation}
	s_c(\alpha) \geq 
	\left\{ \begin{array}{ll}
	\max\{  s_{3,m-1}(\alpha), s_{5,m}(\alpha)  \}, & \beta_{m} \leq \alpha \leq \beta_{m-1} \quad \text{ and } m=m_0+2, \ldots, m_1,  \\
	\max\{  s_{3,m_0}(\alpha), s_{5,m_0+1}(\alpha)  \}, & \beta_{m_0+1} \leq \alpha \leq n-m_0,  \\
	\max\{  s_{3,m-1}(\alpha), s_{4,m}(\alpha)   \}, & n - m \leq \alpha \leq n-m+1 \quad \text{ and } m=1, \ldots, m_0.
	\end{array}
	\right.
	\end{equation}
	
	\item When $n \geq 15$ and $n \in 2\N+1$, then
	\begin{equation}
	s_c(\alpha) \geq 
	\left\{ \begin{array}{ll}
	s_{3,m_1}(\alpha)   , & n/2 \leq \alpha \leq \beta_{m_1}, \\
	\max\{  s_{3,m-1}(\alpha), s_{5,m}(\alpha)  \}, & \beta_{m} \leq \alpha \leq \beta_{m-1} \quad \text{ and } m=m_0+2, \ldots, m_1,  \\
	\max\{  s_{3,m_0}(\alpha), s_{5,m_0+1}(\alpha)  \}, & \beta_{m_0+1} \leq \alpha \leq n-m_0,  \\
	\max\{  s_{3,m-1}(\alpha), s_{4,m}(\alpha)   \}, & n - m \leq \alpha \leq n-m+1 \quad \text{ and } m=1, \ldots, m_0.
	\end{array}
	\right.
	\end{equation}
	
\end{itemize}

\end{thm}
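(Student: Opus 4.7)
The plan is to combine two main ingredients. The first is the fractal adaptation of Bourgain's number-theoretic counterexample due to Luc\`a and the second author \cite{LucaPonceVanegas2021}, which already yields the baseline lower bound \eqref{eq:BestLowerBound}; the second is an adaptation of the intermediate space trick of Du--Kim--Wang--Zhang, which in the positive direction reduces weighted $L^2$ restriction estimates in $\R^n$ to lower-dimensional restriction problems. Our strategy is to run this trick in reverse: instead of reducing to an $(n-m)$-dimensional slice, we will lift a Bourgain-type counterexample built on such a slice back up to $\R^n$, tuning the parameter $m$ so that the Sobolev regularity and the Hausdorff dimension of the resulting divergence set are optimally balanced.

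For each fixed $m$ with $0 \leq m \leq m_1$, I would construct the initial datum as a tensor product: Bourgain's fractal example \cite{Bourgain2016, LucaPonceVanegas2021} in dimension $n - m$ multiplied by a frequency-localized bump on the remaining $m$ coordinates. The $(n-m)$-dimensional factor is designed so that, at rational times $t = p/q$, the Gauss-sum structure of $e^{it\Delta}$ produces anomalously large values on an arithmetic lattice in the slice, while the $m$-dimensional factor is chosen to control the $H^s(\R^n)$ norm without destroying the Fourier support properties or the size of the bad values. The outcome is a family $\{f_{m,q}\}$ whose Schr\"odinger evolutions exceed the expected magnitude on a collection of balls indexed by reduced fractions $p/q$, with centers and radii determined by Bourgain's arithmetic.

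The second step is to identify a divergence set of large Hausdorff dimension. The natural candidate is the $\limsup$ in $q$ of the bad-ball collection. Here the Mass Transference Principle of Beresnevich--Velani plays its role: after verifying, via a Borel--Cantelli-type argument adapted to the arithmetic structure, that the \emph{enlarged} balls cover some fixed cube with full Lebesgue measure, the principle upgrades this to the statement that the original smaller balls have positive $\mathcal{H}^\alpha$-measure for an $\alpha$ determined by the ratio between the two radii. Feeding this into the standard maximal-function/convergence duality together with the $H^s$-bound on $f_{m,q}$, and optimizing over the free parameters, produces the three candidate curves $s_{3,m}$, $s_{4,m}$, and $s_{5,m}$, which correspond to three regimes of dominance among the competing constraints: Sobolev norm growth, Fourier support volume, and lattice density in the slice.

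The final ingredient is the case analysis, which is essentially bookkeeping: for each dimension $n$ and each $\alpha \in [n/2, n]$, we select the value of $m$ maximizing the lower bound, take the pointwise maximum over the three families, and verify that the transitions occur at the thresholds $\beta_m$ and $n - m$. The cutoffs $m_0 = \floor{(n-1)/3}$ and $m_1 = \floor{n/2 - 1}$ delineate where the optimal $m$ changes character, and the split of the theorem into five dimensional regimes mirrors when successive choices of $m$ become admissible and when the three curves cross. I expect the main obstacle to be neither the arithmetic construction nor the final optimization, but rather verifying the hypotheses of the Mass Transference Principle for the concrete $\limsup$ sets produced by the tensor-product lift: one must argue that the enlarged balls are sufficiently well-distributed to give a full-measure set, track the precise contraction factor so that $\mathcal{H}^\alpha$ emerges with the exponent the theorem demands, and check that the intermediate-space lift preserves these properties while simultaneously delivering the expected gain in Sobolev regularity.
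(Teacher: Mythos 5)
Your overall architecture --- a family of counterexamples indexed by $m$, a $\limsup$ divergence set whose dimension is computed by a Mass Transference Principle, and a final optimization over $m$ --- matches the paper, but the construction you describe for the $m$ complementary coordinates is not the intermediate space trick and would not produce the stated exponents. If you tensor the $(n-m)$-dimensional fractal Bourgain datum with a single frequency-localized bump in the remaining $m$ variables, that factor is inert: its evolution is comparable to its $L^2$ norm, so the ratio $\lvert e^{it\Delta}f\rvert/\lVert f\rVert_{H^s}$ is exactly that of the $(n-m)$-dimensional example evaluated at slice dimension $\alpha-m$, which gives $s_c(\alpha)\ge \tfrac{n-m}{2(n-m+1)}+\tfrac{n-m-1}{2(n-m+1)}(n-\alpha)$. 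At $\alpha=n$ this is $\tfrac{n-m}{2(n-m+1)}$, \emph{strictly smaller} than both the known $m=0$ bound $\tfrac{n}{2(n+1)}$ and the claimed $s_{3,m}(n)=\tfrac{n}{2(n-m+1)}$. The whole point of the trick (Section~\ref{sec:Counterexample}) is that the $m$-dimensional factor $h_2$ is itself a Dahlberg--Kenig-type lattice datum, $\widehat{h_2}(\xi'')=\sum_{\lvert\ell''\rvert\le cR^{1/2}/D_2}\widehat{\varphi_2}(\xi''-D_2\ell'')$, whose evolution concentrates on balls of radius $R^{-1/2}$ around the lattice $D_2^{-1}\Z^m$ with height $(R^{1/2}/D_2)^m$ against an $L^2$ norm $(R^{1/2}/D_2)^{m/2}$, valid for all $\lvert t\rvert<1/R$ with no Gauss sums. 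That extra factor $(R^{1/2}/D_2)^{m/2}$ is the entire source of the gain, and the sparseness of the set where it is attained is what makes the dimension count work; without it there is nothing to prove beyond \cite{LucaPonceVanegas2021}.

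Two further steps as you describe them would also need repair. First, for $\alpha<n$ there is no maximal-function/convergence equivalence for $\mathcal H^\alpha$-a.e.\ convergence, so you cannot ``feed'' the single-scale lower bound into such a duality; the paper instead sums the data dyadically into one $f\in H^s$ and proves $\limsup_{t\to 0}\lvert e^{it\Delta}f(x)\rvert=\infty$ directly on $F=\limsup_k F_k$, which requires a stationary-phase argument (Proposition~\ref{prop:Initial_Datum}) showing that at the selected times $t(x)\simeq R_k^{-1}$ the pieces at scales $R_j\neq R_k$ are negligible. Second, the sets $F_k$ are unions of strongly anisotropic slabs, with radii $R^{-1/2}$, $R^{-1}$ and $R^{-1/2}$ in the three blocks of coordinates, so the isotropic ball form of the Beresnevich--Velani MTP governed by ``the ratio between the two radii'' does not apply; one needs the rectangles-to-rectangles version of Wang--Wu, a verification of uniform local ubiquity for suitably dilated slabs (which is where the arithmetic of the fractions $p/q$ enters, via Lemma~\ref{thm:Minkowski}), and the resulting dimension $\min\{\alpha_1,\alpha_2\}$ depends on the full anisotropy rather than on a single contraction exponent. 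The final case analysis you call bookkeeping is indeed bookkeeping, but only once these three ingredients are in place.
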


\begin{figure}[t]
\centering
\includegraphics[width=0.75\textwidth]{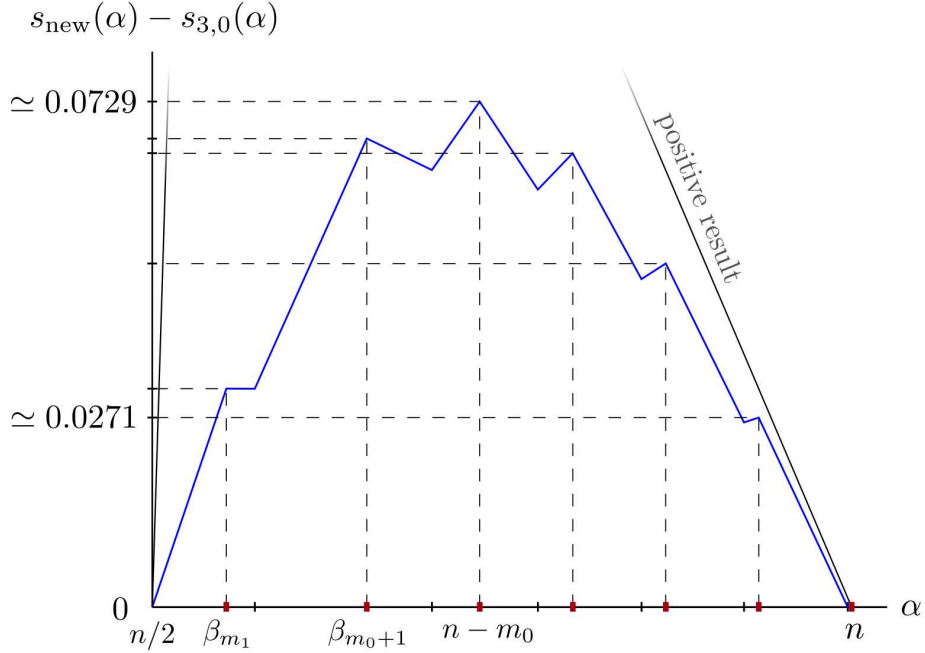}
\caption{Representation of Theorem~\ref{thm:Main_Theorem} for $n = 15$,
where we show the improvement with respect to the former lower bound \eqref{eq:BestLowerBound}. 
The positive result refers to Theorem~2.3 in \cite{du_etal2018}.}
\label{fig:regularity_n15}
\end{figure}

In the notation of Theorem~\ref{thm:Main_Theorem}, 
the best previous result in \cite{LucaPonceVanegas2021} is 
\begin{equation}
s_c(\alpha) \ge s_{3,0}(\alpha) \qquad
\textrm{for } n/2 \le \alpha \le n.
\end{equation}
See Figure~\ref{fig:regularity_n15} for a graphical comparison 
between the old and the new results.

The counterexamples combine 
the fractal extension of Bourgain's counterexample as presented in \cite{LucaPonceVanegas2021}, and
the intermediate space trick of Du--Kim--Wang--Zhang \cite{DKWZ20}.
In \cite{Du2020}, Du exploited this trick 
to construct counterexamples for \eqref{eq:linearized_maximal_fun}, which
are morally equivalent to counterexamples for convergence, 
except for one essential thing:
for convergence the weight $w$ must intersect every line $t \mapsto (x,t)$ 
in at most one interval of length $1$.
This additional restriction is evident in the fact that
Bourgain's counterexample needs Gauss sums, while
Du's examples do not.
The contribution of this paper thus is to confirm that
the numerology in Theorem~1.2 of \cite{Du2020} also
holds for convergence.

Unlike Du, we want to construct a fractal divergence set, which
demands further precautions.
As we did in \cite{eceizabarrena2021},
we compute the dimension of this set using 
the Mass Transference Principle proved in \cite{WangWu2021} (see also \cite{BeresnevichVelani2006}).

To compare Theorem~\ref{thm:Main_Theorem} with Du's Theorem~1.2 in \cite{Du2020},
the reader can use the relationship
\begin{equation}
s_{i,m}(\alpha) = \frac{n-\alpha + 1}{2}- \kappa_i(m + 1; \alpha, n+1),
\end{equation}
where $\kappa_i$ are functions defined by Du.
Notice that we chose our notation trying to make it easier
to compare our results with those of Du.  
The following dictionary might help:
\vspace*{2mm}
\begin{center}
{\tabulinesep=1.3mm
\begin{tabu}{ c c } \hline
\textbf{Du's Theorem~1.2} &
	\textbf{Theorem~\ref{thm:Main_Theorem}} \\
\hline\hline
$d$ & $n + 1$ \\
$j$ & $m + 1$ \\
$\kappa_3(j; \alpha, d) = \dfrac{d - j/2 - \alpha}{d -j + 1}$ &
	$s_{3, m}(\alpha)$ \\
$\kappa_4(j; \alpha, d) = \dfrac{d - \alpha}{2(d - j + 1)}$ & 
	$s_{4, m}(\alpha)$ \\
$\kappa_5(j; \alpha, d) = \dfrac{d - \alpha - 1}{2(d - j - 1)}$ &
	$s_{5, m}(\alpha)$ \\
\hline
\end{tabu}
}
\end{center}

\subsection*{Outline of the paper}

\begin{enumerate}

\item[\it Section~\ref{sec:Counterexample}:]
	For each integer $0 \le m \le n - 1$
	we construct a family of counterexamples,
	where $m$ is the dimension associated with
	the ``intermediate space trick''.
	We determine the set of divergence and
	the regularity of the initial data.

\item[\it Section~\ref{sec:Dimension}:]  
	We use the Mass Transference Principle to compute 
	the Hausdorff dimension of the set of divergence.
	
\item[\it Section~\ref{sec:Sobolev_Regularity}:]
	For each intermediate space dimension $m$ and
	the corresponding family of initial data,
	we fix a dimension $\alpha$ and 
	identify the data with maximum regularity.
	
\item[\it Section~\ref{sec:Maximum_Regularity}:]
	For a fixed dimension $\alpha$, 
	we determine the maximum regularity
	among data with different $m$.

\end{enumerate}

\subsection*{Notation}

\begin{itemize}
\item We denote $e(z) = e^{2\pi iz}$, and the Fourier transform of $f$ and the solution $e^{it\Delta}f$ are
\begin{equation}
\widehat{f}(\xi) = \int_{\mathbb R^n} f(x) \, e \big(-x\xi \big) \, dx
\qquad \text{ and } \qquad 
e^{it\Delta}f(x) = \int_{\mathbb R^n} \widehat f(\xi) \, e \big( x\xi + t|\xi|^2 \big)\, d\xi
\end{equation}

\item $B(a,r) = \{x : \abs{x-a}\le r\}$.

\item  $A\lesssim B$ means that $A\le CB$ for some constant $C>0$.
By $A\gtrsim B$ we denote the analog inequality.
We write $A\simeq B$ if $A\lesssim B$ and $B\lesssim A$.
When we want to stress some dependence of $C$ on a parameter $N$,
we write $A\lesssim_N B$. 

\item We write $c\ll 1$ as a shorthand of ``a sufficiently small constant''.
 
\item Size of sets: If $E \subset \mathbb R^n$ is a Lebesgue measurable set, 
then either $\abs{E}$ or $\mathcal H^n(E)$ denote its Lebesgue measure.
If $E$ is a finite set, then $\abs{E}$ is the number of elements. 

\item 
Given $0 \leq \alpha \leq n$ and $\delta >0$, 
the $(\alpha,\delta)$-Hausdorff content of $E \subset \R^n$ is 
\begin{equation}
\mathcal{H}_\delta^\alpha(E) 
	= \inf\bigg\{\sum_{j=1}^\infty (\textrm{diam}\, U_j)^\alpha \mid 
		E \subset \bigcup_{j=1}^\infty U_j \quad
		\textrm{such that} \quad 
		\textrm{diam}\, U_j < \delta \bigg\}, 
\end{equation}
and the $\alpha$-Hausdorff measure of $E$ is $  \mathcal{H}^\alpha(E) 
	= \lim_{\delta\to 0}\mathcal{H}^\alpha_\delta(E)$.
	The Hausdorff dimension of $E$ is 
	$\dim E
	= \inf\{\alpha \ge 0 \mid 
		 \mathcal{H}^\alpha(F) = 0\}$. 

\end{itemize}

\subsection*{Funding}
Daniel Eceizabarrena is supported by the Simons Foundation Collaboration Grant on Wave Turbulence (Nahmod's Award ID 651469), 
and by the National Science Foundation under Grant No. DMS-1929284 while he was in residence at ICERM - Institute for Computational and Experimental Research in Mathematics in Providence, RI, during the 
\textit{Hamiltonian Methods in Dispersive and Wave Evolution Equations} program.
Felipe Ponce-Vanegas is funded by the Basque Government through
the BERC 2018-2021 program; by the Spanish State Research Agency
through BCAM Severo Ochoa excellence accreditation SEV-2017-0718 and
the project PGC2018-094528-B-I00 - IHAIP; and by
a Juan de la Cierva -- Formation grant FJC2019-039804-I.

\subsection*{Acknowledgments}
We thank Renato Luc\`a for sharing with us all his insights on the problem.
We also thank Alex Barron; 
the idea for this project arose after an email exchange with him.


\section{Counterexample}
\label{sec:Counterexample}

Let $1 \leq m \leq n-1$, and split the variable $\xi \in \mathbb R^n$ as
\begin{equation}
 \xi = (\xi_1, \xi', \xi''), \qquad \text{ where }  \qquad (\xi_1, \xi', \xi'') \in \mathbb R \times  \mathbb R^{n-m-1} \times \mathbb R^m.
\end{equation}
Everywhere in this article, we use this notation for any variable in $\mathbb R^n$ or $\mathbb Z^n$.
Let $\phi \in \mathcal S(\mathbb R)$, $\varphi_1 \in \mathcal S(\mathbb R^{n - m - 1})$ and $\varphi_2 \in \mathcal S(\mathbb R^{m})$, 
all of which have positive Fourier transform with support 
in a ball $B(0,c)$, for $c \ll 1$.
Let also $\psi \in \mathcal S(\mathbb  R^{n-m-1})$ 
be a cutoff function supported in $B(0,c)$, for $c \ll 1$. 
Let $R \gg 1$ be the scale of the counterexample, 
which we should think of as tending to infinity, 
and $D_1, D_2 \gg 1$ be parameters,
which eventually will be appropriately chosen powers of $R$.

First, in Subsection~\ref{sec:Basic_Datum} we construct a
preliminary datum $f_R$ linked to a scale $R$.
Then, in Subsection~\ref{sec:Initial_Datum} we sum $f_R$ for dyadic $R$ 
to construct the counterexample for the convergence problem.

\subsection{A preliminary initial datum}\label{sec:Basic_Datum}

Let us first define the initial datum
\begin{equation}\label{eq:Initial_Datum_Fourier}
f(x) = f_R(x) = g(x_1) \, h_1(x') \, h_2(x'')
\end{equation}
such that
 \begin{equation}\label{eq:Initial_Datum_1}
\widehat g(\xi_1) = \widehat{\phi} \left( \frac{\xi_1 - R}{R^{1/2}} \right)
 \end{equation}
and
\begin{equation}\label{eq:Initial_Datum_Rest}
\widehat{h_1}(\xi') =  \sum_{\ell' \in \mathbb Z^{n-m-1} } \, \psi\left( \frac{\ell'}{R/D_1} \right) \, \widehat \varphi_1(\xi' - D_1 \, \ell' ), \qquad
\qquad
 \widehat{h_2}(\xi'') = \sum_{\substack{\ell'' \in \mathbb Z^m \\ |\ell''| \leq cR^{1/2}/D_2}} \, \widehat \varphi_2(\xi'' - D_2\, \ell'').
\end{equation}
Direct computation shows that 
\begin{equation}
\left\lVert g \right\rVert_2 \simeq R^{1/4}, \qquad \left\lVert h_1 \right\rVert_2 \simeq 
\left( \frac{R}{D_1} \right)^{(n-m-1)/2}, 
\qquad \left\lVert h_2 \right\rVert_2 \simeq \bigg( \frac{R^{1/2}}{D_2} \bigg)^{m/2},
\end{equation}
so 
\begin{equation}\label{eq:Norm_Of_Initial_Datum}
\left\lVert f_R \right\rVert_2 \simeq  R^{1/4} \, \left( \frac{R}{D_1} \right)^{(n-m-1)/2} \,  
\bigg( \frac{R^{1/2}}{D_2} \bigg)^{m/2}.
\end{equation}
Let us now study the evolution of this datum. 
We first do formal computations, which will be justified later. 

\begin{itemize}
	\item In the variable $x_1$,
\begin{equation}\label{eq:Evolution_Of_1_Pre}
\left| e^{it\Delta}g(x_1) \right| 
= 
R^{1/2} \, \left| \int_{\mathbb R} \widehat \phi(\xi_1)\, 
e \left(  \xi_1 R^{1/2}  ( x_1 + 2tR ) + t R |\xi_1|^2   \right) \, d\xi_1 \right|.
\end{equation}
If $|t| < 1/R$ and $ R^{1/2} |x_1 + 2Rt| < 1$, we get
\begin{equation}\label{eq:Evolution_Of_1}
\left| e^{it\Delta}g(x_1) \right| \simeq R^{1/2} \, \phi( R^{1/2} ( x_1 + 2R t ) ) \simeq R^{1/2}. 
\end{equation}

	\item For $x''$, we have
\begin{equation}\label{eq:Evolution_Of_2_Pre}
e^{it\Delta} h_2(x'')  
=
\sum_{\substack{\ell'' \in \mathbb Z^m \\ |\ell''| \leq cR^{1/2}/D_2}} \, 
e^{ 2\pi i \left( D_2 x'' \cdot \ell'' + tD_2^2\, |\ell''|^2  \right) } \,
 \int_{\mathbb R^m} \widehat \varphi_2(\xi'')\, 
e \big( \xi'' (x'' + 2tD_2 \ell'' ) +t|\xi''|^2  \big) \, d\xi''.
\end{equation}
The idea here is that if $|t| < 1/R$ and if we restrict the variable to $|x''| < 1$, 
all elements in the phase except $D_2 x'' \cdot \ell''$ are small. 
Thus, 
\begin{equation}
\left| e^{it\Delta} h_2(x'') \right| \simeq 
\Big|  \sum_{\substack{\ell'' \in \mathbb Z^m \\ |\ell''| \leq cR^{1/2}/D_2}} \, e^{ 2\pi i  \,  D_2 x'' \cdot \ell'' }   \Big| 
\end{equation}
If we choose $x'' = p'' / D_2 + \epsilon''$ for any $p'' \in \mathbb Z^m$ and $|\epsilon''| < R^{-1/2}$, 
then
\begin{equation}\label{eq:Evolution_Of_2}
\left| e^{it\Delta} h_2(x'') \right| \simeq \left( \frac{R^{1/2}}{D_2} \right)^m.
\end{equation}

	\item For $x'$, $h_1$ has a similar structure as $h_2$, so we obtain 
\begin{equation}\label{eq:Evolution_Of_3_Pre}
e^{it\Delta} h_1(x') 
=  \sum_{\ell' \in \mathbb Z^{n-m-1} } \, \psi\left( \frac{\ell'}{R/D_1} \right) \, 
e^{ 2\pi i \left( D_1 x' \cdot \ell' + tD_1^2 \, |\ell'|^2  \right)  } \, 
\int_{\mathbb R^{n-m-1}} \widehat \varphi_1(\xi')\, e\left( \xi' (x' + 2tD_1 \ell' )  + t|\xi'|^2  \right)  \, d\xi'.
\end{equation}
Again, restricting to $|x'| < 1$, the phase inside the integral is small, 
so we expect to have 
\begin{equation}
\left| e^{it\Delta} h_1(x')  \right| \simeq 
\Big|   \sum_{\substack{\ell' \in \mathbb Z^{n-m-1} \\ |\ell'| \leq cR/D_1}} \,    
e^{ 2\pi i \left( D_1 x' \cdot \ell' + tD_1^2 \, |\ell'|^2  \right)  }  \Big|.
\end{equation}
In this case we have a quadratic phase, 
so we take $x' = p' /(D_1 \, q) + \epsilon' $ and $t = p_1 / (D_1^2 q)$ 
such that $q \in 2 \mathbb N + 1$,  $p_1 \in \mathbb Z$ coprime with $q$, 
$p' \in \mathbb Z^{n-m-1}$ and $|\epsilon'| < R^{-1}$. 
That way, the exponential sum turns into the well-known Gauss sum, 
so we would obtain
\begin{equation}\label{eq:Evolution_Of_3}
\begin{split}
\left| e^{it\Delta} h_1(x')  \right| & 
\simeq 
\Big|   \sum_{\substack{\ell' \in \mathbb Z^{n-m-1} \\ |\ell'| \leq cR/D_1}} \, e\Big( \frac{p' \cdot \ell' +  p_1 \, |\ell'|^2}{q} \Big)   \Big| 
= \prod_{i=2}^{n-m} \Big|  \sum_{n =-cR/D_1}^{cR/D_1} \,  e \Big( \frac{p_i n +  p_1 \, n^2}{q}  \Big)     \Big|
\\ 
& \simeq \prod_{i=2}^{n-m} \frac{R}{qD_1} \, \sqrt{q} 
 = \left(  \frac{R}{D_1 \, q^{1/2}}\right)^{n-m-1}.
\end{split}
\end{equation}
	
\end{itemize}
Thus, combining \eqref{eq:Evolution_Of_1}, 
\eqref{eq:Evolution_Of_2} and \eqref{eq:Evolution_Of_3}
we expect to obtain
\begin{equation}\label{eq:Evolution}
\left| e^{it\Delta} f_R(x) \right|  \simeq R^{1/2} \, \left(  \frac{R}{D_1 \, q^{1/2}}\right)^{n-m-1} \, \bigg( \frac{R^{1/2}}{D_2} \bigg)^m,
\end{equation}
subject to the restrictions
\begin{equation}\label{eq:Restrictions}
t = \frac{p_1}{D_1^2 \, q}, \quad x_1 \in B^1 \left( \frac{R}{D_1^2} \frac{p_1}{q}, \frac{1}{R^{1/2}} \right), \quad  x' \in B^{n-m-1} \left( \frac{p'}{D_1\, q}, \frac{1}{R} \right), \quad x'' \in B^m \left( \frac{p''}{D_2}, \frac{1}{R^{1/2}} \right),
\end{equation}
where $q \in 2\, \mathbb N + 1$ and $ p \in \mathbb Z^n$  such that $\operatorname{gcd}(p_1,q) = 1$.
In view of this, let us define the slabs 
\begin{equation}\label{eq:Slabs}
E_R(p,q) 
= B^1 \left( \frac{R}{D_1^2} \frac{p_1}{q}, \frac{1}{R^{1/2}} \right) 
\times 
B^{n-m-1} \left( \frac{p'}{D_1\, q}, \frac{1}{R} \right)
\times
B^m \left( \frac{p''}{D_2}, \frac{1}{R^{1/2}} \right).
\end{equation}
All these formal computations,
 together with \eqref{eq:Norm_Of_Initial_Datum},
motivate the following proposition:
\begin{prop}\label{prop:Initial_Datum_Basic}
Let $1 \leq m \leq n-1$, $R \gg 1$ and $D_1,D_2,Q \gg 1$. 
Let $q \in 2\mathbb N + 1 $ be such that $Q/2 \leq q < Q$
and $p \in \mathbb Z^n$ such that $\operatorname{gcd}(p_1,q) = 1$. Then, 
letting $t = p_1 / (D_1^2 q)$, we have
\begin{equation}\label{eq:thm:Main_Lower_Bound}
\frac{ \left| e^{it\Delta} f_R(x) \right| }{\lVert f_R \rVert_2} \simeq R^{1/4} \, \left( \frac{R}{D_1 Q} \right)^{(n-m-1)/2} \, \left( \frac{R^{1/2}}{D_2} \right)^{m/2 }, \qquad \forall x \in E_R(p,q). 
\end{equation}
Moreover, if $1/10 \leq |x_1| \leq 1$, then the time satisfies $t \simeq 1/R$. 
\end{prop}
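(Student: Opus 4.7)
The plan is to rigorously verify the formal calculations \eqref{eq:Evolution_Of_1_Pre}--\eqref{eq:Evolution_Of_3} carried out in the preceding paragraphs. The tensor product structure $f_R(x)=g(x_1)h_1(x')h_2(x'')$ makes $e^{it\Delta}f_R$ factor as a product of three lower-dimensional free evolutions, so I can treat $e^{it\Delta}g$, $e^{it\Delta}h_1$ and $e^{it\Delta}h_2$ separately; the proposition then follows by multiplying the three estimates and dividing by the norm computation \eqref{eq:Norm_Of_Initial_Datum}, using $q\simeq Q$. The final assertion about the time is immediate from the definition \eqref{eq:Slabs}: when $|x_1|\in[1/10,1]$ the slab center $R p_1/(D_1^2 q)$ is comparable to $1$, so $t=p_1/(D_1^2 q)\simeq 1/R$.

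For $e^{it\Delta}g(x_1)$ I would change variables $\xi_1=R+R^{1/2}\eta$, which extracts an amplitude $R^{1/2}$ together with a unimodular prefactor and reduces matters to $R^{1/2}\int\widehat\phi(\eta)\,e\bigl(R^{1/2}\eta(x_1+2tR)+tR\eta^2\bigr)\,d\eta$. Since $\widehat\phi\ge 0$ is supported in $B(0,c)$ with $c\ll 1$, and $|t|\simeq 1/R$, the quadratic piece $|tR\eta^2|\lesssim c^2$ is small, and the linear piece $|R^{1/2}\eta(x_1+2tR)|\lesssim c$ is small by the slab constraint $|x_1+2tR|\lesssim R^{-1/2}$. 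Positivity of $\widehat\phi$ then yields $|e^{it\Delta}g(x_1)|\simeq R^{1/2}$.

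The factors $e^{it\Delta}h_1$ and $e^{it\Delta}h_2$ are lattice sums of modulated Schwartz bumps. The common first step is to shift the integration variable inside each summand by the offset $D_i\ell$, producing \eqref{eq:Evolution_Of_2_Pre} and \eqref{eq:Evolution_Of_3_Pre}; in each case the inner integral equals, up to an $O(c^2)$ phase, a positive Schwartz function of $x\pm 2tD_i\ell$, which is bounded below by a fixed constant on the allowed range of $\ell$. What remains is the outer exponential sum. For $h_2$ this phase is linear in $\ell''$, and after substituting $x''=p''/D_2+\epsilon''$ the integer contribution $p''\cdot\ell''$ drops out, leaving $\sum_{\ell''}e(D_2\epsilon''\cdot\ell''+tD_2^2|\ell''|^2)$, both of whose phases are $o(1)$ over $|\ell''|\le cR^{1/2}/D_2$, so the sum is comparable to the number of lattice points, $(R^{1/2}/D_2)^m$.

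The main obstacle, and the heart of the proof, is the Gauss sum analysis for $e^{it\Delta}h_1(x')$. With $t=p_1/(D_1^2 q)$, $\gcd(p_1,q)=1$, and $x'=p'/(D_1 q)+\epsilon'$ with $|\epsilon'|<R^{-1}$, the quadratic phase reduces to $(p'\cdot\ell'+p_1|\ell'|^2)/q$ plus small errors. I would partition $\ell'\in\mathbb Z^{n-m-1}$ into residue classes modulo $q$, using that $q\ll R/D_1$ so the cutoff $\psi(\ell' D_1/R)$ is essentially constant on each fundamental $q$-cube. On each such class the modulus-$q$ phase is frozen, and applying coordinatewise the classical estimate $\bigl|\sum_{n=0}^{q-1}e((a n+p_1 n^2)/q)\bigr|\simeq q^{1/2}$ (valid since $q$ is odd and $\gcd(p_1,q)=1$), together with the count $\simeq(R/(D_1 q))^{n-m-1}$ of fundamental domains inside $|\ell'|\le cR/D_1$, gives $|e^{it\Delta}h_1(x')|\simeq(R/(D_1 q^{1/2}))^{n-m-1}$. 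The delicate point throughout is to verify that the residual phases coming from $\epsilon'$, from the $t|\xi'|^2$ correction in the inner integral, and from the smoothing by $\psi$ are uniformly small so that the main term is not disturbed; this bookkeeping is the only real technical step.
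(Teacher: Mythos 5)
Your overall architecture is the same as the paper's: factor $e^{it\Delta}f_R$ into the three one-parameter evolutions, verify the smallness of all residual phases for $g$ and $h_2$ so that positivity of the bump functions gives the lower bounds, and reduce the $h_1$ factor to complete Gauss sums modulo $q$. The treatment of $g$, of $h_2$, and of the claim $t\simeq 1/R$ is correct and matches the paper's proof.

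The gap is in the step you dismiss as ``bookkeeping'' for $h_1$. Writing $d=n-m-1$, $L=R/D_1$ and $\zeta$ for the slowly varying amplitude (the cutoff $\psi(\ell'/L)$ times the small phases from $\epsilon'$ and from the inner integral), your plan is to split $\ell'=qk+r$, freeze $\zeta$ on each fundamental $q$-cube, and factor out the count of cubes against the complete Gauss sum. The main term is then $\simeq (L/q)^d\, q^{d/2}=L^d q^{-d/2}$. But the error from replacing $\zeta$ by a constant on each cube, estimated at first order (Lipschitz variation $O(q/L)$ per cube, $q^d$ terms per cube, $(L/q)^d$ cubes, triangle inequality over the residues $r$ since the error can correlate with the Gauss phase), is $O(L^{d-1}q)$. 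This dominates the main term unless $q^{1+d/2}\ll L$, which fails throughout the parameter ranges actually used (e.g.\ already for $m=0$, $u_1=1/2$, $u_2$ near $1-\tfrac{1}{2(n+1)}$ one has $q\simeq L\simeq R^{1/2-o(1)}$ while $d=n-1$). So ``the cutoff is essentially constant on each cube'' is not enough: you must exploit the smoothness of $\zeta$ to \emph{all} orders, showing that the $r$-dependence of the block sums $\sum_k\zeta(qk+r)$ is $O_N((q/L)^N)$ for every $N$ — via Poisson summation in $k$ and rapid decay of $\widehat\zeta$ away from frequency $\lesssim 1/L$, or equivalently via the $N$-fold discrete summation by parts of Lemma~\ref{thm:perturbation}, whose hypothesis $\lVert\widetilde\Delta^N\zeta\rVert_\infty\lesssim_N L^{-2N}$ is precisely what has to be verified for the specific $\zeta$ in \eqref{eq:Zeta_First}, uniformly in $x'$ and $t$. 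This high-order error control is the actual content of the proof of \eqref{eq:Evolution_Of_3}, and your proposal does not contain it.
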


\begin{proof}
Let us first check that $t \simeq 1/R$. 
Indeed, from the definition of $E_R(p,q)$, we have 
$x_1 \in B(Rt, R^{-1/2})$, which implies 
\begin{equation}
 1/20 \leq  x_1 - R^{-1/2}  \leq Rt \leq x_1 + R^{-1/2} \leq 2,
\end{equation}
if $R$ is large enough.

The main estimate \eqref{eq:thm:Main_Lower_Bound} follows by combining 
\eqref{eq:Evolution_Of_1}, \eqref{eq:Evolution_Of_2} and \eqref{eq:Evolution_Of_3}
with \eqref{eq:Norm_Of_Initial_Datum}. 
Thus, it suffices to justify \eqref{eq:Evolution_Of_1}, \eqref{eq:Evolution_Of_2} and \eqref{eq:Evolution_Of_3}.

Estimate \eqref{eq:Evolution_Of_1}
follows from direct computation.
Indeed, from \eqref{eq:Evolution_Of_1_Pre} we write
\begin{align}
	\left| e^{it\Delta}g(x_1) \right| 
   & = R^{1/2} \, \left| \int_{\mathbb R} \widehat \phi(\xi_1)\, e\left( \xi_1 R^{1/2}  ( x_1 + 2tR ) + t R |\xi_1|^2  \right)  \, d\xi_1 \right| \\
	&\ge R^{1/2}\abs[\bigg]{\int_{\mathbb R} \widehat \phi(\xi_1)\, 
		\cos \big(2\pi \big(\xi_1 R^{1/2}  ( x_1 + 2tR ) + t R |\xi_1|^2 \big) \big) \, d\xi_1 }.
\end{align}
Asking $\abs{R^{1/2}  ( x_1 + 2tR )}< 1 $ and $ \abs{tR} < 1$,
since $\supp\widehat{\phi} \subset [-c, c]$ for $c$ small enough, 
we get
\begin{equation}
	\Big| \xi_1 R^{1/2}  ( x_1 + 2tR ) + t R |\xi_1|^2 \Big| < 1/10,
\end{equation} 
and therefore $|e^{it\Delta}g(x_1)| \gtrsim R^{1/2}$.

We prove \eqref{eq:Evolution_Of_2} similarly.
From \eqref{eq:Evolution_Of_2_Pre} we have
\begin{equation}
	e^{it\Delta} h_2(x'')  = 
		\int_{\mathbb R^m} \widehat \varphi_2(\xi'')\,
		\sum_{\substack{\ell'' \in \mathbb Z^m \\ |\ell''| \leq cR^{1/2}/D_2}} \, 
			e \Big(D_2 x'' \cdot \ell'' + tD_2^2\, |\ell''|^2 + 
				\xi'' (x'' + 2tD_2 \ell'' ) + t|\xi''|^2 \Big) \, 
		d\xi''.
	\end{equation}
Let $x'' = p''/D_2 + \epsilon''$ with $|\epsilon''| \leq R^{-1/2}$.
Since $\supp \widehat{\varphi}_2 \subset [-c, c]$, 
choosing $c$ small enough we get
\begin{equation}
	\Big| D_2 \epsilon'' \cdot \ell'' + tD_2^2\, |\ell''|^2 + 
	\xi'' (x'' + 2tD_2 \ell'' ) + t|\xi''|^2 \Big| < 1/10,
\end{equation}
and thus $ \abs{e^{it\Delta} h_2(x'')} \gtrsim (R^{1/2}/D_2)^m$.

Estimate \eqref{eq:Evolution_Of_3} is more technical. 
Let $x' = p'/(D_1 q) + \epsilon'$ with $|\epsilon'| < R^{-1}$, 
and write
\begin{equation}\label{eq:Evolution_1_Rigorous}
\abs[\big]{e^{it\Delta} h_1(x')} = 
	\abs[\Big]{ \sum_{\ell' \in \mathbb Z^{n-m-1} } \, \zeta(\ell')\, 
	e \Big(  \frac{p' \cdot \ell' + p_1 |\ell'|^2}{q}  \Big)    },
\end{equation} 
where
\begin{equation}\label{eq:Zeta_First}
\zeta(\ell') = \psi \left( \frac{\ell'}{R/D_1} \right) \, e^{2\pi i D_1 \epsilon' \cdot \ell'} \, 
\int \widehat \varphi_1(\xi')\, e \left( \xi' (x' + 2tD_1 \ell')  +  t|\xi'|^2   \right) \, d\xi'.
\end{equation}
To bound \eqref{eq:Evolution_1_Rigorous}, 
we use a simplified version of \cite[Lemma 3.4]{eceizabarrena2021}. 
\begin{lem}[Lemma 3.4 of \cite{eceizabarrena2021}]\label{thm:perturbation}
Let $d \in \mathbb N$ and $f(m) = a \, |m|^2 + b \cdot m$ such that $a \in \mathbb Z$ and $b \in \mathbb  Z^d$. 
Let also $\zeta\in C_0^\infty(\mathbb R^d)$ and define the discrete Laplacian $\widetilde\Delta$ by
\begin{equation}
\widetilde{\Delta} \zeta(y) = \sum_{j=1}^d \big( \zeta(y + e_j) + \zeta(y - e_j) - 2\zeta(y) \big), \qquad y \in \mathbb R^d,
\end{equation}
where $(e_j)_{j=1}^d$ is the canonical basis of $\mathbb R^d$. 
Assume that $\zeta$ is supported in $B(0,L)$ for some $L >0$, and moreover that 
$\norm{\widetilde\Delta^N\zeta}_\infty\lesssim_N L^{-2N}$ for every $N \in \mathbb N$.
Then,
\begin{equation} \label{eq:thm:perturbation}
\sum_{m\in \Z^d} \zeta(m) \,  e \Big( \frac{f(m)}{q} \Big) = 
	\bigg(\frac{1}{q^d}\sum_{m\in\Z^d} \zeta(m)\bigg)\sum_{l\in\Z_q^d} e \Big( \frac{f(l)}{q} \Big) + 
	\BigO_N\bigg(q^{d/2}\left(\frac{L}{q}\right)^{d-2N}\bigg)
\end{equation}
for any integer $N> d/2$.
\end{lem}

We use the lemma with $d = n-m-1$ and 
 $L = R/D_1$. 
 Rewrite $\zeta$ in \eqref{eq:Zeta_First} as
\begin{equation}\label{eq:Zeta}
\zeta(\ell') = 
	\psi \left( \frac{\ell'}{L} \right) \, e^{2\pi i \delta \,  \cdot \ell' / L} \, 
		\int \widehat \varphi_1(\xi')\, e\left( \xi' (x' + 2\tau \ell'/L) +  t|\xi'|^2  \right)  \, d\xi',
\end{equation}
where $\delta = R \epsilon'$ and $\tau = Rt$ satisfy $|\delta|, |\tau| < 1$. 
Notice that $\zeta$ is supported in $B(0, L)$.
On the other hand, we have
 $\lVert \widetilde{\Delta}^N \zeta  \rVert_\infty  \lesssim \sup_{|\alpha| = 2N} \lVert \partial^\alpha \zeta \rVert_\infty $,
where $\alpha = (\alpha_2, \ldots, \alpha_{n-m})$ denotes a multi-index.
Thus, it suffices to bound $\lVert \partial^\alpha \zeta \rVert_\infty$
uniformly in $x'$ and $t$. 
Write
\begin{equation}
\partial^\alpha \zeta(y) =  \int \widehat \varphi_1(\xi')\, e^{2\pi i  \left( \xi' \cdot x' + t|\xi'|^2 \right) } \, 
\partial^\alpha  \Big[  \psi \left( \frac{y}{L} \right) \, e^{2\pi i ( \delta +  2  \tau \xi' ) \cdot y/L } \Big] \, d\xi', \qquad y \in \mathbb R^{n-m-1}, \, \, |y| \leq L.
\end{equation}
Calling $A(z) = \psi(z)\, e^{2\pi i (\delta + 2\tau \xi') \cdot z}$, we have
\begin{equation}
 \partial^\alpha  \Big[  \psi \left( \frac{y}{L} \right) \, e^{2\pi i \,  ( \delta +  2  \tau \xi' ) \cdot y/L } \Big] = \frac{1}{L^{2N}} \, \partial^\alpha A(y/L),
\end{equation} 
and since $\partial^\alpha A(z)$ is uniformly bounded in $|\delta|, |\tau|, |z| < 1$,
we get $\lVert \partial^\alpha \zeta \rVert_\infty \lesssim_N L^{-2N}$. 
Thus, by Lemma~\ref{thm:perturbation}, 
we estimate \eqref{eq:Evolution_1_Rigorous} as
\begin{equation}\label{eq:Hard_Bound}
\begin{split}
\Big| e^{it\Delta} h_1(x') \Big| 
& = \frac{1}{q^{n-m-1}}\, \Big| \sum_{\ell' \in \mathbb Z^{n-m-1}} \zeta(\ell') \Big| \,   \Big|  \sum_{ \ell' \in \Z_q^{n-m-1}}  \, e \Big(   \frac{p' \cdot \ell' + p_1 |\ell'|^2}{q}  \Big)   \, \Big|  \\
& \qquad \qquad \qquad \qquad \qquad \qquad \qquad \qquad \qquad +  \mathcal O_N \left( q^{(n-m-1)/2}\,  \bigg(\frac{L}{q}\right)^{n-m - 1 - 2N}\bigg).
\end{split}
\end{equation}
Since the phase of $\zeta$ in \eqref{eq:Zeta} is small, 
by the same procedure we used for \eqref{eq:Evolution_Of_2} we get
\begin{equation}
\Big| \sum_{\ell' \in \mathbb Z^{n-m-1}} \zeta(\ell') \Big| \simeq L^{n-m-1}.
\end{equation} 
Also, since $\operatorname{gcd}(p_1,q) = 1$, $q$ is odd and $q \simeq Q$, the Gauss sums in \eqref{eq:Hard_Bound} satisfy
\begin{equation}
\Big|  \sum_{ \ell' \in \Z_q^{n-m-1}}  \, e \left( \frac{p' \cdot \ell' + p_1 |\ell'|^2}{q} \right)  \, \Big|  \simeq  Q^{(n-m-1)/2}.
\end{equation}
Thus, taking $N > (n-m-1)/2$ and replacing $L = R/D_1$, from \eqref{eq:Hard_Bound} we get
\begin{equation}
\begin{split}
\Big| e^{it\Delta} h_1(x') \Big| 
&  \simeq  Q^{(n-m-1)/2} \left( \frac{L}{Q} \right)^{n-m-1}  + \mathcal O_N \bigg( Q^{(n-m-1)/2}\,  \Big( \frac{L}{Q} \Big)^{ n - m - 1 - 2N}\bigg) \\
& \gtrsim  Q^{(n-m-1)/2} \left( \frac{L}{Q} \right)^{n-m-1} \\
& = \left( \frac{R}{D_1 Q^{1/2}} \right)^{n-m-1},
\end{split}
\end{equation}
which proves \eqref{eq:Evolution_Of_3}.
\end{proof}

Roughly speaking, Proposition~\ref{prop:Initial_Datum_Basic} would suffice 
in the case of the Lebesgue measure $\alpha = n$. 
Indeed,  given that $Q$, $D_1$ and $D_2$ will be certain powers of $R$, 
we will be able to find an exponent $s_m = s_m(Q, D_1, D_2)$ such that
\begin{equation}\label{eq:Heuristic_Exponent}
\frac{ \left| e^{it\Delta} f_R(x) \right| }{\lVert f_R \rVert_2} \simeq R^{1/4} \, \left( \frac{R}{D_1 Q} \right)^{(n-m-1)/2} \, \bigg( \frac{R^{1/2}}{D_2} \bigg)^{m/2 } = R^{s_m}, \qquad \forall x \in E_R(p,q). 
\end{equation}
Since the estimate does not depend on the particular choice of $p,q$
but rather on the size $q \simeq Q$, 
then \eqref{eq:Heuristic_Divergence} holds for $F_R = \bigcup_{q \simeq Q} \bigcup_{p} E_R(p,q)$.
Consequently, up to checking that $\mathcal H^n(F_R \cap B(0,1)) \simeq 1$ for all $R$, 
we would be able to write 
\begin{equation}\label{eq:Heuristic_Divergence}
\frac{ \left\lVert \sup_t \left| e^{it\Delta} f_R \right| \right\rVert_{L^2(B(0,1))}  }{\lVert f_R \rVert_{H^{s_m - \epsilon}}} \gtrsim R^{\epsilon}, \qquad \forall R \gg 1, 
\end{equation}
which would disprove the standard maximal estimate,
which is equivalent to the almost everywhere convergence property,
in $H^s(\R^n)$ for all $s < s_m$.  

However, in the fractal case $\alpha <n$, where we ask for
 almost everywhere convergence 
with respect to the $\mathcal H^\alpha$ measure, 
the maximal characterization does not work. 
This means that we need to construct a divergent counterexample explicitly.

\subsection{Construction of the counterexample}
\label{sec:Initial_Datum}

Let $\alpha < n$. 
To find a counterexample for the $\mathcal H^\alpha$ almost everywhere convergence property, 
we need to construct a function $f \in H^s(\R^n)$ whose set of divergence $F$ satisfies 
$\operatorname{dim}_{\mathcal H} F = \alpha$.
Moreover, we look for the biggest possible Sobolev regularity $s$. 

The standard way to do this is to sum dyadically
the data $f_R$ we constructed in the previous section. 
For every $j \in \mathbb N$, let $R_j = 2^j$.
As before, assume that $Q$, $D_1$ and $D_2$ are powers of $R$
so that $s_m = s_m(Q, D_1, D_2)$ is well-defined in \eqref{eq:Heuristic_Exponent}.
Define
\begin{equation}\label{eq:Initial_Datum}
f(x) = \sum_{j \geq K_0} \, j \, \frac{f_{R_j}(x)}{R_j^{s_m}\, \lVert f_{R_j} \rVert_2}
\end{equation}
for some $K_0$ large enough. 
Observe that $f \in H^s(\R^n)$ for every $s < s_m$ because  
\begin{equation}
\lVert f \rVert_{H^s} 
\leq \sum_{j \geq K_0}  j \frac{\lVert  f_{R_j} \rVert_{H^s}}{R_j^{s_m}\, \lVert f_{R_j} \rVert_2} 
\simeq \sum_{j \geq K_0}  \frac{j}{R_j^{s_m - s}   } < \infty.
\end{equation}
As suggested at the end of the previous subsection, 
since the estimate in Proposition~\ref{prop:Initial_Datum_Basic}
does not depend on $p,q$ but only on $Q$, 
we work with  
\begin{equation}\label{eq:F_R}
F_k = \bigcup_{\substack{ Q_k / 2 \leq q < Q_k \\ q \in 2\mathbb N + 1 }} \, \,  \bigcup_{ p \in G(q)  } E_{k}(p,q),
\end{equation}
where we denote $E_k(p,q) = E_{R_k} (p,q)$ and 
$G(q) = \left\{ p \in \mathbb Z^n \, : \,  \operatorname{gcd}(p_1,q) = 1 \right\}$. 
This way, by Proposition~\ref{prop:Initial_Datum_Basic} we have
\begin{equation}\label{eq:Big_Piece}
\frac{  \big| e^{it\Delta} f_{R_k} (x) \big|  }{ R_k^{s_m}\, \lVert f_{R_k} \rVert_2  }  \simeq 1, \qquad \forall x \in F_k, \qquad \forall k \in \mathbb N
\end{equation}
However, this only accounts for the behavior of the piece $f_{R_k}$. 
We show next that the contribution of 
the remaining $f_{R_j}$ with $ j\neq k$ is much smaller.

\begin{prop}\label{prop:Initial_Datum}
Let $K_0 \in \mathbb N$ be large enough and $k \geq K_0$.
Let $x \in F_k \cap B(0,1)$ be such that $1/10 < |x_1|\leq 1$. 
Then, there exists a time $t = t(x) \simeq R_k^{-1}$ such that
$
\abs[\big]{ e^{i t(x) \Delta} f(x)} \gtrsim k.
$
\end{prop}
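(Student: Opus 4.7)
The plan is to apply the triangle inequality to the series defining $f$ in~\eqref{eq:Initial_Datum}, isolating the $j=k$ term and controlling the tail $j\neq k$ by a non-stationary phase argument. Since $x\in F_k\cap B(0,1)$ with $1/10<\abs{x_1}\leq 1$, the definition~\eqref{eq:F_R} of $F_k$ produces $q\in 2\N+1$ with $q\simeq Q$ and $p\in G(q)$ such that $x\in E_k(p,q)$. Setting $t(x)=p_1/(D_1^2 q)$, Proposition~\ref{prop:Initial_Datum_Basic} gives $t(x)\simeq R_k^{-1}$ and $\abs{e^{it(x)\Delta}f_{R_k}(x)}\simeq R_k^{s_m}\norm{f_{R_k}}_2$, so the $j=k$ summand contributes $\simeq k$.

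For the tail terms $j\neq k$, I would use the product structure $f_{R_j}(x)=g_j(x_1)\,h_{1,j}(x')\,h_{2,j}(x'')$ together with the trivial Fourier $L^1$ bounds $\abs{e^{it\Delta}h_{1,j}}\lesssim(R_j/D_1)^{n-m-1}$ and $\abs{e^{it\Delta}h_{2,j}}\lesssim(R_j^{1/2}/D_2)^m$, extracting all the required decay from the one-dimensional factor
\begin{equation}
e^{it\Delta}g_j(x_1) = R_j^{1/2}\,e\big(x_1 R_j + tR_j^2\big)\int\widehat\phi(\eta)\,e\big(\eta R_j^{1/2}(x_1+2tR_j) + tR_j\eta^2\big)\,d\eta
\end{equation}
via non-stationary phase on $\supp\widehat\phi\subset[-c,c]$. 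The $\eta$-derivative of the phase is $R_j^{1/2}(x_1+2tR_j)+2tR_j\eta$: for $j<k$ we have $tR_j=R_j/R_k\ll 1$ and, since $\abs{x_1}\geq 1/10$, the derivative has size $\simeq R_j^{1/2}$; for $j>k$ the term $2tR_j$ dominates and the derivative has size $\simeq tR_j^{3/2}=R_j^{3/2}/R_k$. Repeated integration by parts then yields $\abs{e^{it\Delta}g_j(x_1)}\lesssim_N R_j^{1/2}\mu_{j,k}^{-N}$ for arbitrary $N$, with $\mu_{j,k}$ equal to $R_j^{1/2}$ or $R_j^{3/2}/R_k$ depending on the regime.

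Multiplying the three factor bounds and comparing with the formula for $R_j^{s_m}\norm{f_{R_j}}_2$ read off from~\eqref{eq:Norm_Of_Initial_Datum} and~\eqref{eq:Heuristic_Exponent} (so that $R_j^{s_m}\norm{f_{R_j}}_2\simeq R_j^{1/2}(R_j/D_1)^{n-m-1}(R_j^{1/2}/D_2)^m\,Q^{-(n-m-1)/2}$), one obtains an estimate of the form $\abs{e^{it(x)\Delta}f_{R_j}(x)}/(R_j^{s_m}\norm{f_{R_j}}_2)\lesssim_N Q^{(n-m-1)/2}\mu_{j,k}^{-N}$. Since $Q$ is a fixed power of $R_j$, for $N$ large enough this gives exponential-in-$j$ decay: the sum $\sum_{j<k,\,j\geq K_0} j\,(\cdots)$ is a convergent geometric-type series whose value is $o(1)$ as $K_0\to\infty$, while $\sum_{j>k} j\,(\cdots)$ is bounded by $O(k\,R_k^{-\varepsilon})=o(k)$ for some $\varepsilon>0$. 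The triangle inequality then delivers $\abs{e^{it(x)\Delta}f(x)}\gtrsim k$, as claimed. The main obstacle is the regime $j>k$: there the wave packet $g_j$ has dispersed under $e^{it\Delta}$ (because $tR_j\gg 1$), so the Schwartz concentration of $\phi$ is not directly available and the gain must be extracted from the quadratic term $tR_j\eta^2$ of the phase rather than from spatial localization.
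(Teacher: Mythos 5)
Your proposal is correct and follows essentially the same route as the paper: isolate the $j=k$ term via Proposition~\ref{prop:Initial_Datum_Basic}, bound the $h_1,h_2$ factors trivially by their Fourier $L^1$ norms, and kill the $j\neq k$ terms by repeated integration by parts in the $g_j$ factor, splitting into the regimes $j<k$ and $j>k$ exactly as the paper does (your $\mu_{j,k}$ is the paper's $\lambda_j = R_j^{1/2}\abs{x_1+2tR_j}$, and the polynomial losses in $Q,D_1,D_2$ are absorbed the same way). The only quibble is your closing remark: even for $j>k$ the decay is extracted from the linear part of the phase being non-stationary (the center of the dispersed packet, at distance $\simeq \abs{t}R_j$ from the origin, is far from $x_1$), while the quadratic term $tR_j\eta^2$ contributes only the harmless perturbation $2tR_j\eta$ to the derivative, as your own computation two sentences earlier correctly shows.
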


With this proposition, 
the construction of the counterexample will be concluded 
if we can take the limit $k \to \infty$. 
For that, we need points that lie in infinitely many sets $F_k$.
The set of divergence is thus
\begin{equation}\label{eq:Divergence_Set}
F = \limsup_{k\to\infty} F_k  = \bigcap_{K \in \mathbb N}  \bigcup_{k \geq K} F_k.
\end{equation} 

\begin{cor}\label{cor:Divergence}
Let $F = \limsup_{k \to \infty} F_k$.
Then, 
\begin{equation}
\limsup_{t\to 0} \big| e^{it\Delta} f(x) \big|  = 
	\infty, \qquad \forall x \in F \cap A(1/10,1).
\end{equation}
\end{cor}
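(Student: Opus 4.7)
The corollary is essentially a direct consequence of Proposition~\ref{prop:Initial_Datum} combined with the $\limsup$ definition of $F$, so my plan is to unpack that definition and extract a sequence of times going to zero along which the solution blows up.

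First I would fix $x \in F \cap A(1/10, 1)$ and use the definition $F = \bigcap_{K}\bigcup_{k\ge K} F_k$ to produce an increasing sequence of integers $k_1 < k_2 < \dots \to \infty$, all at least $K_0$, with $x \in F_{k_j}$ for every $j$. The condition $x \in A(1/10,1)$ is exactly what is needed to ensure that the hypothesis $x \in F_{k_j} \cap B(0,1)$ with $1/10 < |x_1| \le 1$ of Proposition~\ref{prop:Initial_Datum} is satisfied for every $j$.

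Next I would apply Proposition~\ref{prop:Initial_Datum} for each index $k_j$ to obtain a time $t_j = t_{k_j}(x)$ with $t_j \simeq R_{k_j}^{-1} = 2^{-k_j}$ and
\begin{equation}
\bigl| e^{it_j \Delta} f(x) \bigr| \gtrsim k_j.
\end{equation}
Since $k_j \to \infty$, we have $t_j \to 0$, and hence
\begin{equation}
\limsup_{t\to 0}\, \bigl| e^{it\Delta} f(x) \bigr| \;\ge\; \limsup_{j\to\infty}\, \bigl| e^{it_j \Delta} f(x) \bigr| \;=\; +\infty.
\end{equation}

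There is no real obstacle in the argument itself: all the analytic work (decoupling the $f_{R_k}$-piece in \eqref{eq:Big_Piece} from the tail contributions of $f_{R_j}$ with $j \neq k$, choosing an appropriate $t(x) \simeq R_k^{-1}$, and proving the $\gtrsim k$ lower bound) lives inside Proposition~\ref{prop:Initial_Datum}. The only thing one must be slightly careful about is that the times $t_j$ supplied by that proposition genuinely accumulate at $0$, which follows immediately from $t_j \simeq R_{k_j}^{-1}$ and $R_{k_j} = 2^{k_j} \to \infty$; the annulus condition on $x_1$ is what licenses this quantitative estimate on $t_j$ via the last sentence of Proposition~\ref{prop:Initial_Datum_Basic}.
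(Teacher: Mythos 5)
Your argument is correct and is essentially the paper's own proof: both extract a sequence $k_j \to \infty$ with $x \in F_{k_j}$ from the $\limsup$ definition, invoke Proposition~\ref{prop:Initial_Datum} to get times $t_j \simeq R_{k_j}^{-1} \to 0$ along which $|e^{it_j\Delta}f(x)| \gtrsim k_j$, and conclude. No issues.
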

\begin{proof}[Proof of Corollary~\ref{cor:Divergence}]
If $x \in F$, then there exists a sequence $k_n$ such that $x \in F_{k_n}$ for all $n \in \mathbb N$. 
By Proposition~\ref{prop:Initial_Datum}, there exists 
a sequence of times $t_n = t_n(x)$ such that 
$t_n \simeq 1/R_{k_n}$ and $|e^{it_n \Delta}f(x)| \gtrsim k_n$ for all $n \in \mathbb N$. 
Thus, since $\lim_{n \to \infty} t_n(x) = 0$, we get 
\begin{equation}
\limsup_{t \to 0} \big| e^{it\Delta} f(x) \big| \geq \lim_{n \to \infty} \big| e^{it_n(x) \Delta} f(x) \big| = \infty.
\end{equation}
\end{proof}

In view of Corollary~\ref{cor:Divergence}, 
the main goal turns to computing the Hausdorff dimension of $F$. 
We do that in Section~\ref{sec:Dimension}. 
To conclude this section, we prove Proposition~\ref{prop:Initial_Datum}.

\begin{proof}[Proof of Proposition~\ref{prop:Initial_Datum}]
Fix $k \geq K_0$ and take $x \in F_k$.
According to \eqref{eq:Initial_Datum}, the solution looks like
\begin{equation}
\sum_{j \geq K_0} j \, \frac{e^{it\Delta}f_{R_j}(x)}{R_j^{s_m}\, \lVert f_{R_j} \rVert_2}.
\end{equation}
We first focus on the contribution of the piece $e^{it\Delta}f_{R_j}$ with $j=k$.
Since $x \in F_k$, there are $p_1$ and $q \simeq Q$ such that $x \in E_{R_k}(p,q)$, 
and thus, by Proposition~\ref{prop:Initial_Datum_Basic},  
there is a time $t(x) = p_1/(D_1 q)$ such that $t(x) \simeq 1/R_k$ and  
\begin{equation}\label{eq:Big_Piece_2}
\frac{  \big| e^{it(x)\Delta} f_{R_k} (x) \big|  }{ R_k^{s_m}\, \lVert f_{R_k} \rVert_2  }  \simeq 1.
\end{equation}

Now we want to measure the contribution of $e^{it(x)\Delta}f_j(x)$ for $j \neq k$.
We are going to prove 
\begin{equation}\label{eq:Small_Pieces}
 \frac{  \big| e^{it(x)\Delta} f_{R_j} (x) \big|  }{ R_j^{s_m}\, \lVert f_{R_j} \rVert_2  } \lesssim \frac{1}{j \, R_j}, \qquad \forall j \neq k.
\end{equation}
If this holds, then joining \eqref{eq:Big_Piece_2} and \eqref{eq:Small_Pieces}  we get
\begin{equation}
\big| e^{it(x)\Delta} f(x) \big| 
\gtrsim k - C\sum_{j \neq k} \frac{1}{R_j} 
\geq k - C\sum_{j =1}^\infty \frac{1}{2^j} 
\geq k/2, 
\qquad \text{for } K_0 \gg 1,
\end{equation}
which would conclude the proof.

To prove \eqref{eq:Small_Pieces}, the idea is that
the term $e^{it\Delta}g_{R_j}(x_1)$ in \eqref{eq:Evolution_Of_1}
localizes the solution
to the $n$-plane $T_j = \{x \, :  \,  |x_1+2R_jt| < R_j^{1/2}\}$. 
Thus, if $j \neq k$, the planes $T_j$ and $T_k$ are disjoint 
except in a neighborhood of the origin.
Consequently, 
if $|x_1| > 1/10$, the contribution of 
$e^{it\Delta}g_{R_j}(x_1)$ in the plane $T_k$ is very small.

Let us formalize the previous paragraph. First, we directly bound the contribution in the variables $x'$ and $x''$.
From \eqref{eq:Evolution_Of_2_Pre} and \eqref{eq:Evolution_Of_3}, we get
\begin{equation}
\big|  e^{it\Delta}h_{1, R_j}(x') \big| \lesssim \left( \frac{R_j}{D_1} \right)^{n-m-1} 
\qquad \text{ and } \qquad 
\big|  e^{it\Delta}h_{2, R_j}(x'') \big| \lesssim \bigg( \frac{R_j^{1/2}}{D_2} \bigg)^{m}, 
\end{equation}
and thus 
\begin{equation}\label{eq:Ready_For_Bound_Of_Osc_Int}
\big|  e^{it\Delta}f_{R_j}(x) \big| \lesssim   \frac{R_j^{n - m/2 - 1}}{D_1^{n-m-1}\, D_2^m} \, \big|  e^{it\Delta}g_{R_j}(x_1) \big|.
\end{equation}
Now, from \eqref{eq:Evolution_Of_1_Pre}, write
\begin{equation}\label{eq:Oscillatory_Integral}
\big|  e^{it\Delta}g_{R_j}(x_1) \big| = R_j^{1/2} \, \left|  \int_{\mathbb R}\widehat\phi(\eta) \, e^{2\pi i \,  \lambda_j \theta_j(\eta)}\, d\eta \right|
\end{equation}
where
\begin{equation}
\lambda_j = R_j^{1/2} \, | x_1 + 2tR_j |, \qquad \theta_j(\eta) = \eta + \frac{t}{\lambda_j}\, R_j\, \eta^2, \qquad \theta_j'(\eta) = 1 + 2\frac{tR_j}{\lambda_j}\, \eta.
\end{equation}
Now we exploit the decay of this oscillatory integral. 
Observe that 
\begin{equation}
\begin{split}
\lambda_j & = R_j^{1/2 }\, \Big( 2 t |R_j - R_k| +  \mathcal O \left(  |x_1 + 2tR_k| \right)\,   \Big) \\
& \simeq R_j^{1/2 }\, \Big( 2 \frac{ |R_j - R_k|}{R_k} +  \mathcal O \left( R_k^{-1/2} \right)\,   \Big).
\end{split}
\end{equation}
We separate in two cases:
\begin{itemize}
	\item If $j < k$, then $R_j / R_k \leq 1/2$ and
	\begin{equation}
		\lambda_j \simeq R_j^{1/2 }\, \Big( 1 -   \frac{ R_j}{R_k} +  \mathcal O \left( R_k^{-1/2} \right)\,   \Big)
		\simeq R_j^{1/2}.
	\end{equation}
	In this case, 
	\begin{equation}
	\Big| \frac{t R_j}{\lambda_j} \eta \Big| \leq \frac{ R_j}{R_k \, R_j^{1/2}} \leq \frac{1}{R_j^{1/2}} < \frac14
	\qquad  \Longrightarrow \qquad
	\left| \theta'_j(\eta) \right| > 1/2 > 0.
	\end{equation}

	\item If $j > k$, then $R_j / R_k \geq 2$ and 
	\begin{equation}
	 \lambda_j \simeq R_j^{1/2 }\, \Big(   \frac{ R_j}{R_k} - 1 +  \mathcal O \left( R_k^{-1/2} \right)\,   \Big)
	 \simeq R_j^{1/2 }\, \frac{ R_j}{R_k}.
	\end{equation}
	In particular $\lambda_j > R_j^{1/2}$, so in this case
	\begin{equation}
	\Big| \frac{t R_j}{\lambda_j} \eta \Big| \leq \frac{ R_j}{R_k \,\lambda_j } \simeq \frac{1}{R_j^{1/2}} < \frac14
	\qquad \Longrightarrow \qquad
	\left| \theta'_j(\eta) \right| > 1/2 > 0.
	\end{equation}
\end{itemize}
Thus, in both cases we can integrate by parts in \eqref{eq:Oscillatory_Integral} to obtain  
\begin{equation}
\big|  e^{it\Delta}g_{R_j}(x_1) \big| \lesssim \frac{  R_j^{1/2} }{ \lambda_j^N } \lesssim \frac{1}{R_j^{(N-1)/2}},  \qquad \forall N \in \mathbb N.
\end{equation}
Coming back to \eqref{eq:Ready_For_Bound_Of_Osc_Int}, 
using \eqref{eq:Norm_Of_Initial_Datum} and  recalling that 
$D_1$ and $D_2$ will be powers of $R$, 
we get 
\begin{equation}
 \frac{  \big| e^{it(x)\Delta} f_{R_j} (x) \big|  }{ R_j^{s_m}\, \lVert f_{R_j} \rVert_2  }
 \lesssim \frac{1}{R_j^N}, \qquad \forall N \in \mathbb  N.
\end{equation}
In particular, we get \eqref{eq:Small_Pieces} and the proof is complete.
\end{proof}


\section{Dimension of the set of divergence}\label{sec:Dimension}

In this section we compute the Hausdorff dimension of the divergence set $F$ 
defined in \eqref{eq:F_R} and \eqref{eq:Divergence_Set}.
Recall that the slabs in \eqref{eq:Slabs} are
\begin{equation}\label{eq:slabs}
E_R(p,q) = 
	B^1\bigg( \frac{R}{D_{1}^2} \, \frac{p_1}{q}, \frac{1}{R^{1/2}} \bigg)\times
	B^{n - m -1}\bigg(\frac{1}{D_{1}}\frac{p'}{q}, \frac{1}{R}\bigg)\times
	B^m\bigg(  \frac{p''}{D_{2}}, \frac{1}{R^{1/2}}\bigg),
\end{equation}
and that we build the divergence set with $E_k(p,q) = E_{R_k}(p,q)$. 
Rather than with the parameters $D_1, D_2$ and $Q$, 
we find it more convenient to work with $(u_1,u_2,u_3)$ defined by
\begin{equation}\label{eq:Geometric_Parameters}
R^{u_1} = \frac{QD_1^2}{R}, \qquad 
R^{u_2} = QD_1, \qquad\mbox{and}\qquad
R^{u_3} = D_2,
\end{equation}
or equivalently, 
\begin{equation}
Q = R^{2u_2-u_1-1}, \qquad 
D_1 = R^{1+u_1-u_2}, \qquad\mbox{and}\qquad
D_2 = R^{u_3}.
\end{equation}
In view of \eqref{eq:slabs}, 
$(u_1, u_2, u_3)$ determine the separation of successive slabs 
for each fixed $q$
in the coordinates $x_1, x'$ and $x''$ respectively.

We have a few preliminary restrictions for the parameters.
For each fixed $q$, we want that successive slabs do not intersect with each others.
For that, for instance in $x_1$, we need 
\begin{equation}
\frac{1}{R^{1/2}} < \frac{R}{D_1 q} \simeq \frac{R}{D_1 Q} = \frac{1}{R^{u_1}} \quad \Longrightarrow \quad u_1 \leq 1/2. 
\end{equation}
Also, we require that we have more than a single slab in each of the directions, 
so we require $R^{-u_1} = R/(D_1Q) \ll 1$, which implies $u_1 > 0$. 
Similar reasons suggest that we require
\begin{equation}\label{eq:Restrictions_Basic}
0 < u_1 \le 1/2, \qquad
0 < u_2\le 1 \quad \text{ and } \quad
0 < u_3 \le 1/2.
\end{equation}
Since $Q$ is the size of the denominators $q \in \mathbb N$, we always have $Q \geq 1$, which implies
\begin{equation}\label{eq:Restrictions_Q}
2u_2 - u_1 \ge 1.
\end{equation}

\subsection{Upper bound}
With these restrictions, we can compute an upper bound for $\dim F$.
\begin{prop} \label{thm:upper_bound}
Let $F\subset \R^n$ be the divergence set defined 
in \eqref{eq:F_R} and \eqref{eq:Divergence_Set},
with parameters $(u_1, u_2, u_3)$ as in \eqref{eq:Geometric_Parameters}, 
subject to the restrictions \eqref{eq:Restrictions_Basic} and \eqref{eq:Restrictions_Q}.
Then,
\begin{equation}
\dim F \le \min\{\alpha_1, \alpha_2\},
\end{equation}
where
\begin{equation}
\alpha_1 = \frac{m-1}{2} + (n-m+1)u_2+mu_3
\end{equation}
and
\begin{equation}
\alpha_2 = \begin{cases}
n-m-3 + 4u_2 + 2mu_3, 
	& \text{for} \quad u_2 \le 3/4  \\
n-m+2mu_3, 
	& \text{for} \quad u_2 \ge 3/4 
\end{cases}
\end{equation}
\end{prop}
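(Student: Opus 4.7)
\textbf{Plan for Proposition~\ref{thm:upper_bound}.} My plan is to produce two different ball covers of each level set $F_k$ at two natural scales and invoke the sub-additivity $\mathcal H^\alpha_\delta(F) \le \sum_{k \ge K(\delta)} \mathcal H^\alpha_\delta(F_k)$ for the $\limsup$ set $F$, where $K(\delta)$ is chosen so that the covers of $F_k$ with $k \ge K(\delta)$ have diameter at most $\delta$; each cover yields one of the two upper bounds. The common preliminary is a slab count: for each $q \in [Q_k/2, Q_k)$ and for slabs meeting $B(0,c)$, the indices $p_1$, $p'$, $p''$ range over intervals of sizes $\simeq R_k^{u_1}$, $\simeq R_k^{u_2}$ per component, and $\simeq R_k^{u_3}$ per component respectively, giving $\simeq R_k^{u_1 + (n-m-1)u_2 + m u_3}$ slabs per $q$; multiplying by $Q_k \simeq R_k^{2u_2 - u_1 - 1}$ yields $N_{\mathrm{slab}}(k) \lesssim R_k^{(n-m+1)u_2 + m u_3 - 1}$ in total.

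For the $\alpha_1$ bound, I cover each slab at its finest side-length $\rho_k = R_k^{-1}$. Each slab is contained in an axis-parallel box with one side of length $R_k^{-1/2}$, $n-m-1$ sides of length $R_k^{-1}$, and $m$ sides of length $R_k^{-1/2}$, so it is covered by $\simeq R_k^{(m+1)/2}$ balls of radius $R_k^{-1}$, giving
\begin{equation}
\mathcal H^\alpha_{R_k^{-1}}(F_k) \lesssim N_{\mathrm{slab}}(k) \cdot R_k^{(m+1)/2} \cdot R_k^{-\alpha} = R_k^{(n-m+1)u_2 + m u_3 + (m-1)/2 - \alpha}.
\end{equation}
Summing over $k \ge K(\delta)$ and letting $\delta \to 0$ forces $\mathcal H^\alpha(F) = 0$ whenever $\alpha > \alpha_1$.

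For the $\alpha_2$ bound, I exploit the product inclusion $F_k \subset A_1^{(k)} \times A_2^{(k)} \times A_3^{(k)}$, where $A_i^{(k)} = \bigcup_q A_i(q)$ is the projection of $\bigcup_{p,q} E_k(p,q)$ onto the $x_1$, $x'$, $x''$ subspaces respectively. Covering each factor at scale $R_k^{-1/2}$: for $A_1^{(k)}$, the $x_1$-centers $(R_k/D_1^2)(p_1/q)$ over all $(p_1, q)$ with $q \simeq Q_k$ number at most $\simeq R_k^{2u_2 - 1}$, to be compared with the trivial cover count $R_k^{1/2}$ of $[-c, c]$; the two cross at $u_2 = 3/4$, producing the two cases in the definition of $\alpha_2$. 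For $A_2^{(k)}$ one uses the trivial cover $\lesssim R_k^{(n-m-1)/2}$, and for $A_3^{(k)}$ the direct count $\simeq R_k^{m u_3}$. Multiplying and applying the same $\limsup$ sub-additivity yields $\dim F \le \alpha_2$.

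I do not anticipate substantial obstacles: both bounds reduce to bookkeeping on the slab count and its sub-covers. The only mildly non-obvious observation is that $\alpha_1$ arises from covering at the \emph{smaller} scale $R_k^{-1}$ rather than the slab-diameter scale $R_k^{-1/2}$, trading the growth factor $R_k^{(m+1)/2}$ in the number of balls against the smaller $R_k^{-\alpha}$ per ball; the Farey-type count for $N_1$ in the $\alpha_2$ analysis is routine, since over-counting $(p_1, q)$-pairs that give the same reduced fraction only helps the upper bound.
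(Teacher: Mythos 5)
Your proposal is correct and follows essentially the same two-scale covering strategy as the paper: the $\alpha_1$ bound is identical (cover each of the $\simeq R_k^{(n-m+1)u_2+mu_3-1}$ slabs in the unit ball by $\simeq R_k^{(m+1)/2}$ balls of radius $R_k^{-1}$), and the $\alpha_2$ bound uses the same scale $R_k^{-1/2}$, the same counts, and the same crossover at $u_2=3/4$ between the fraction count $R_k^{2u_2-1}$ and the trivial count $R_k^{1/2}$. The only difference is organizational: the paper groups the slabs into disjoint families in the $x_1$ and $x''$ directions (invoking the separation $R_k^{1-2u_2}>R_k^{-1/2}$ when $u_2\le 3/4$) and covers each family by a neighborhood of a plane, whereas you cover the three coordinate projections separately and multiply, which gives the same exponents and, as you observe, does not even need the disjointness of the $x_1$-intervals.
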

\begin{proof}
Since $F = \limsup_{k \to \infty} F_k \subset \bigcup_{k\ge N} F_k$ 
for all $N>0$,
it suffices to cover $F_k$ for every $k \in \mathbb N$. 
From the definition in \eqref{eq:F_R}, 
$F_k$ is formed by 
\begin{equation}
Q\cdot R^{u_1}\cdot R^{(n-m-1)u_2}\cdot R^{mu_3} = 
	R^{(n-m+1)u_2 + mu_3 - 1}
\end{equation} 
slabs $E_k(p,q)$.
Each of those slabs is covered by $R_k^{1/2} \, \big(R_k^{1/2}\big)^m$ balls 
of radius $R_k^{-1}$. 
In all, each $F_k$ is covered by 
\begin{equation}
R^{(n-m+1)u_2 + mu_3 - 1} \, R^{\frac{m+1}{2}} = R^{(n-m+1)u_2 + mu_3 + \frac{m-1}{2}},
\end{equation}
so taking $\delta = R_N^{-1}$, we get
\begin{equation}
\mathcal H^\alpha_{R_N^{-1}}(F) \leq \sum_{k=N}^\infty R^{-\alpha} \, R^{(n-m+1)u_2 + mu_3 + \frac{m-1}{2}}.
\end{equation}
Thus, if $\alpha \geq  (n-m+1)u_2 + mu_3 + \frac{m-1}{2} = \alpha_1$,
we get $\mathcal H^\alpha(F) = \lim_{N \to \infty} H^\alpha_{R_N^{-1}} (F) = 0$, 
so $\dim F \leq \alpha_1$. 


To prove $\dim F\le \alpha_2$, we need to arrange 
the slabs of $F$ differently
(see Figure~\ref{fig:Arranging_Slabs} for visual support). 
\begin{figure}
\centering
\includegraphics[width=0.7\textwidth]{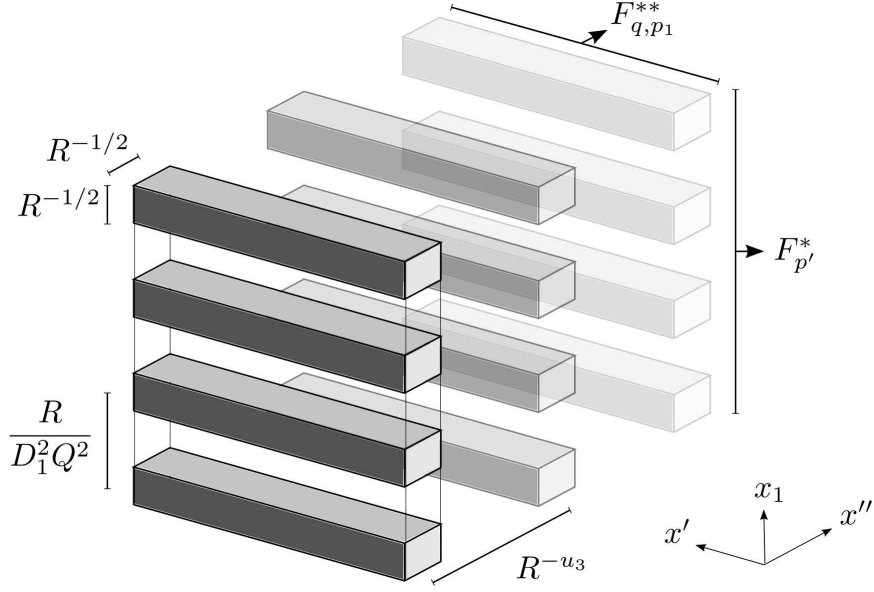}
\caption{Arrangement of the slabs of $F_k$.}
\label{fig:Arranging_Slabs}
\end{figure}
First, observe that in the direction $x''$ 
the slabs are disjoint. 
Thus, it is useful to arrange $F_k$ as  
\begin{equation}\label{eq:Arrangement1}
F_k = \bigcup_{p''\in \Z^m}
	\Bigg[\bigcup_{q\simeq Q}
	\bigcup_{(p_1,p')\in G(q)} E_k(p,q)\Bigg]
  = \bigcup_{p''\in \Z^m} F^*_{k, p''}.
\end{equation}

Let us look at the separation between two slabs 
$E_k(p,q)$ and $E_k(\tilde{p},\tilde{q})$ 
in the direction $x_1$, which is
\begin{equation}
\frac{R}{D_1^2} \frac{| p_1 \tilde{q} - \tilde{p}_1 q |}{q \tilde{q}} \gtrsim \frac{R}{D_1^2 Q^2} = R^{1 - 2u_2}.
\end{equation}
Thus, if we ask $R^{1 - 2u_2} > R^{-1/2}$, which amounts to $u_2 \leq 3/4$, 
the slabs in direction $x_1$ are disjoint.
Consequently, we can further arrange 
\begin{equation}
F^*_{k,p''} = 	\bigcup_{q\simeq Q} \bigcup_{p_1}
	\bigg[\bigcup_{p' : p \in G(q)} 
		E(p,q) \bigg]
	  = 	\bigcup_{q\simeq Q} \bigcup_{p_1} F^{**}_{k,q,p_1, p''},
\end{equation}
and the number of sets $ F^{**}_{k,q,p_1, p''}$ in $F_k$ is at most $R^{2u_2 - 1}\, R^{m\, u_3}$.
Since each set $F^{**}_{k,q,p_1, p''}$ can be covered 
by an 
$R^{-1/2}_k$ neighborhood of a $(n-m-1)$-plane,
in particular we can cover it by  $R^{(n-m-1)/2}_k$ balls of radius 
 $R^{-1/2}_k$.
 Thus, 
 \begin{equation}
 \mathcal H_{R_N^{-1/2}}^\alpha (F) 
 \leq \sum_{k = N}^\infty R_k^{-\alpha / 2} \, R_k^{2u_2 - 1 + mu_3 + (n-m-1)/2},
\end{equation}   
so $ \mathcal H^\alpha(F) = \lim_{N\to \infty}    \mathcal H_{R_N^{-1/2}}^\alpha (F)  = 0$
if $\alpha \geq n - m - 3 + 4u_2 + 2mu_3 = \alpha_2$.
Thus, $\dim F \leq \alpha_2$. 

When $u_2 > 3/4$, the slabs in direction $x_1$ need not be disjoint anymore.
Still, from the arrangement \eqref{eq:Arrangement1},
every $F_{k,p''}^*$ can be covered by a $R_k^{-1/2}$ neighborhood 
of a $(n-m)$-plane, which in turn is covered by $R_k^{(n-m)/2}$ balls
of radius $R_k^{-1/2}$. 
Since there are $R_k^{mu_3}$ different $F_{k,p''}^*$ in $F_k$,
\begin{equation}
 \mathcal H_{R_N^{-1/2}}^\alpha (F) 
 \leq \sum_{k=N}^\infty R_k^{-\alpha/2} \, R_k^{ m u_3 + (n-m)/2 }.
\end{equation}
 Thus, if $\alpha > n-m + 2mu_3 = \alpha_2$, we get 
 $\mathcal H^\alpha (F) = \lim_{N \to \infty}  \mathcal H_{R_N^{-1/2}}^\alpha (F) =0$,
 which implies 
 $\dim F \leq \alpha_2$. 
\end{proof}

\subsection{Lower bound}

As we announced in the introduction, 
to prove the lower bound for $\dim F$ 
we use the Mass Transference Principle from rectangles to rectangles
proved by Wang and Wu \cite{WangWu2021}.
For that, in the following lines we identify our setting with the notation 
and definitions introduced in 
 \cite[Section 3.1]{WangWu2021}.
 
 Let us index each slab $E_k(p,q)$ with $\alpha = (k, p , q)$ and 
gather the indices in 
\begin{gather}\label{eq:Indices_MTP}
J = \bigcup_{k \gg 1} J_k, \\
J_k = \{(k, p, q) \mid Q_k/2 \le q\textrm{ odd} \le Q_k \textrm{ and } (p_1, p', p'') \in G(q) \times \Z^m \}.
\end{gather}
The resonant set $\{\mathcal{R}_\alpha \mid \alpha \in J\}$ 
from \cite[Definition~3.1]{WangWu2021}
corresponds to the set of centers of the slabs, 
so we work with $\kappa = 0$.
Define the function $\beta : J \to \R_+$ by $\beta((k,p,q)) = R_k$, 
and we set $u_k = l_k = R_k$ so that 
$J_k = \{\alpha \mid l_k \le \beta(\alpha) \le u_k \} = \{\alpha \mid  \beta(\alpha) = R_k \} $.
Also, we set $\rho(u) = u^{-1}$, 
 so our slabs can be rewritten as
\begin{equation}\label{eq:Slabs_For_MTP}
E_k(p, q) = B(\mathcal{R}_\alpha, \rho(R_k))^{\vc{b}} = 
	\prod_{i = 1}^n B\big(\mathcal{R}_{\alpha, i}, \rho(R_k)^{b_i}\big), 
\end{equation}
where the exponent $\boldsymbol{b} = (b_1, \ldots, b_n)$ is  
\begin{equation}
\vc{b} = (1/2,\, 
	\underbrace{1,\ldots,1}_{n-m-1},\,
	\underbrace{1/2,\ldots, 1/2}_m). 
\end{equation}
Let us also define the dilation exponent
\begin{equation}
\vc{a} = (a_1,\, 
	\underbrace{a_2,\ldots,a_2}_{n-m-1},\,
	\underbrace{a_3,\ldots, a_3}_m), \qquad \text{ such that } \qquad  a_i \le b_i, \quad \forall i = 1, \ldots, n. 
\end{equation} 
For brevity, most of the time we will just write $\vc{b} = (b_1,b_2,b_3)$
and  $\vc{a} = (a_1,a_2,a_3)$.

We can now adapt the Mass Transference Principle
from rectangles to rectangles in \cite[Theorem~3.1]{WangWu2021} 
to our setting. 
\begin{thm}[Mass Transference Principle from rectangles to rectangles - Theorem 3.1 of \cite{WangWu2021}]
\label{thm:MTP}
Let $\{\mathcal{R}_\alpha \mid \alpha \in J\} \subset \R^n$ be
a set of points. 
Assume that for $(\rho, \vc{a})$ 
there exists $c > 0$ such that for any ball $B$,
\begin{equation} \label{eq:Uniform_local_U}
\mathcal{H}^n \bigg(B \cap \bigcup_{\alpha \in J_k}B \left(\mathcal{R}_\alpha, \rho(R_k) \right)^{\boldsymbol{a}}  \bigg) 
\ge 
	c \, \mathcal{H}^n(B),\qquad
\textrm{for all } k \ge k_0(B),
\end{equation}
where $k_0(B)$ is some constant that depends on the ball $B$. 
Then, for the set
\begin{equation}
W(\vc{b}) = 
	\Big\{x \in \R^n \mid x \in B(\mathcal{R}_\alpha, \rho(R_k))^{\vc{b}} 
	\textrm{ for infinitely many } \alpha \in J \Big\}
\end{equation}
with exponent $\vc{b} = (b_1, \ldots, b_n)$  such that
with $a_i \le b_i$ for all $i= 1, \ldots, n$
we get
\begin{equation}
\dim W(\vc{b}) \ge 
	\min_{B \in \mathcal{B}}\bigg\{
	\sum_{j \in K_1(B)} 1 + 
	\sum_{j \in K_2(B)} \Big(1 - \frac{b_j - a_j}{B}\Big) + 
	\sum_{j \in K_3(B)} \frac{a_j}{B} \bigg\}.
\end{equation}
Here, $\mathcal{B} = \{b_1, \ldots, b_n\}$, and 
for every $B \in \mathcal{B}$ we have 
the
partition of $\{1, \ldots, n\}$ given by
\begin{equation}
\begin{array}{c}
K_1(B) = \{j \mid a_j \ge B\}, \qquad \qquad 
K_2(B) = \{j \mid b_j \le B\}\setminus K_1(B), \\
\\
K_3(B) = \{1, \ldots, n\}\setminus (K_1(B)\cap K_2(B)).
\end{array}
\end{equation}
\end{thm}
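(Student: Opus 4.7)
The plan is to obtain Theorem~\ref{thm:MTP} as a direct specialization of Wang--Wu's Mass Transference Principle from rectangles to rectangles, namely Theorem~3.1 of \cite{WangWu2021}. Their theorem is stated for an abstract system consisting of a family of resonant sets $\{\mathcal{R}_\alpha\}_{\alpha \in J}$ of some common dimension $\kappa$, an index function $\beta : J \to \R_+$ that partitions $J$ into levels $J_k = \{\alpha \mid l_k \le \beta(\alpha) \le u_k\}$, a decaying dilation function $\rho$, exponent vectors $\vc{a}, \vc{b} \in (0,1]^n$ with $a_i \le b_i$, and a local ubiquity hypothesis on the $\vc{a}$-rectangles. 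Once a dictionary between our concrete setup and their abstract framework is fixed, our statement becomes a direct reading of their conclusion.

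What I would do is make that dictionary explicit, exactly along the lines of the assignments laid out in the paragraph preceding the theorem. Set $\kappa = 0$, so that each $\mathcal{R}_\alpha$ is a single point of $\R^n$; take $J_k$ as in \eqref{eq:Indices_MTP}, $\beta((k,p,q)) = R_k$ with $l_k = u_k = R_k$, and $\rho(u) = u^{-1}$; and let $\vc{b}$ and $\vc{a}$ be the coordinatewise exponent vectors displayed in the statement. With these identifications, the anisotropic balls $B(\mathcal{R}_\alpha, \rho(R_k))^{\vc{b}}$ of \cite{WangWu2021} coincide by \eqref{eq:Slabs_For_MTP} with our slabs $E_k(p,q)$, the limsup set $W(\vc{b})$ is precisely the object considered by Wang--Wu in this language, and the hypothesis \eqref{eq:Uniform_local_U} is nothing but their local ubiquity assumption written out.

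Once the translation is in place, the remaining verifications are routine bookkeeping: each $a_i, b_i$ lies in $(0,1]$, we have $a_i \le b_i$ by assumption, and $\rho(R_k) = R_k^{-1} \to 0$ along the dyadic scales. Invoking \cite[Theorem~3.1]{WangWu2021} then yields the lower bound for $\dim W(\vc{b})$, which in our notation is exactly the minimum over $B \in \mathcal{B}$ of the expression in the conclusion. No additional analysis beyond matching notation is needed for this step, and in particular no new probabilistic or Diophantine input enters here.

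The hard part of the argument, in my view, is not the proof of Theorem~\ref{thm:MTP} itself but the companion task of establishing the local ubiquity hypothesis \eqref{eq:Uniform_local_U} for suitable $\vc{a}$ in our concrete geometric setting --- namely, showing that unions of $\vc{a}$-contracted slabs asymptotically fill a uniform positive proportion of any fixed ball in $\R^n$. That verification, which relies on equidistribution properties of the rational centers $R p_1/(D_1^2 q)$, $p'/(D_1 q)$, $p''/D_2$, is what will drive the lower-bound calculations for $\dim F$ in the rest of Section~\ref{sec:Dimension}. It is, however, logically separated from Theorem~\ref{thm:MTP}, which is purely a reformulation of \cite[Theorem~3.1]{WangWu2021} adapted to our notation.
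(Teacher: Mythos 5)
Your proposal is correct and matches the paper exactly: the paper gives no independent proof of Theorem~\ref{thm:MTP}, presenting it as a direct restatement of \cite[Theorem~3.1]{WangWu2021} under the dictionary ($\kappa = 0$, $\beta((k,p,q)) = R_k$, $l_k = u_k = R_k$, $\rho(u) = u^{-1}$, and the exponent vectors $\vc{a}, \vc{b}$) set out in the preceding paragraphs. You also correctly identify that the substantive work is the verification of the ubiquity hypothesis \eqref{eq:Uniform_local_U}, which the paper carries out separately in Lemmas~\ref{thm:Minkowski} and \ref{thm:Lemma_Restriction_a}.
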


\begin{rmk}
As proposed in \cite[Definition 3.3]{WangWu2021},
a system $\{\mathcal{R}_\alpha \mid \alpha \in J\}$ 
that satisfies \eqref{eq:Uniform_local_U} is called 
uniformly locally ubiquitous with respect to $(\rho, \vc{a})$. 
As observed in \cite[Remark 3.2]{WangWu2021}, 
uniform local ubiquity implies that
the $\limsup$ of the dilated slabs has actually full measure.
\end{rmk}

According to \eqref{eq:Indices_MTP} and \eqref{eq:Slabs_For_MTP}, 
 we have
\begin{equation}
\bigcup_{\alpha \in J_k} B \left(\mathcal{R}_\alpha, \rho(R_k) \right)^{\boldsymbol{b}} 
=
\bigcup_{(k,p,q) \in J_k} E_k(p,q)
=
 F_k
\end{equation}
and $W(\vc{b}) = \limsup_{k \to \infty} F_k = F$. 
Thus, to apply Theorem~\ref{thm:MTP} and obtain
a lower bound for $\dim F$, 
we need to find a dilation exponent $\vc{a}$ 
such that the dilated sets
\begin{equation}
F_k^{\vc{a}} = \bigcup_{(k,p,q) \in J_k} E_k^{\vc{a}}(p,q)
\end{equation}
satisfy the uniform local ubiquity condition \eqref{eq:Uniform_local_U}
for every $k \gg 1$. 
To simplify notation, 
we check this for $F_R$ with general $R$ instead of $R_k$. 

First, write $F_R^{\vc{a}}$ as a product 
$F_R^{\vc{a}} = X_R^{a_1, a_2} \times Y_R^{a_3}$ with
\begin{align}
X_R^{a_1,a_2} &= 
	\bigcup_{\substack{Q/2 \le q \le Q \\ q \textrm{ odd} }}\,
	\bigcup_{(p_1,p')\in G(q)} B^1\bigg(  \frac{R}{D_1^2} \, \frac{p_1}{q}, \frac{1}{R^{a_1}}\bigg)\times
	B^{n - m -1}\bigg( \frac{1}{D_1} \,  \frac{p'}{q}, \frac{1}{R^{a_2}}\bigg), \\
Y_R^{a_3} &= 
	\bigcup_{p'' \in \Z^m} B^m\bigg(\frac{p''}{D_2}, \frac{1}{R^{a_3}}\bigg).
\end{align}
Let $B \subset \R^n$ be a ball. Since we always can find a cube inside $B$
with a comparable measure, 
we may assume that $B = B^{n-m} \times B^m$, 
where $B^{n-m}$ and $B^m$
are balls in $\R^{n-m}$ 
and in $\R^m$, respectively.
Then, 
\begin{equation} \label{eq:BXYtoBX}
\begin{split}
\mathcal{H}^n(B\cap ( X_R^{a_1,a_2} \times Y_R^{a_3}) )
& = \mathcal{H}^n( (B^{n-m}\cap  X_R^{a_1,a_2} ) \times (B^m \cap Y_R^{a_3}) ) \\
& \simeq 
	\mathcal{H}^{n-m}(B^{n-m} \cap X_R^{a_1,a_2}) \, \mathcal{H}^m(B^m \cap Y_R^{a_3}).
\end{split}
\end{equation}
Let us first estimate $\mathcal{H}^m(B^m \cap Y_R^{a_3})$. 
Since $D_2 = R^{u_3} \to \infty$ when $R \to \infty$, 
for large enough $R$
there are approximately $D_2^m\, \mathcal H^m(B^m)$ slabs of $Y_R^{a_3}$ 
in the ball $B^m$. 
Thus, 
\begin{equation}
\mathcal{H}^m (B^m \cap Y_R^{a_3}) \simeq D_2^m \, \mathcal H^m(B^m) \, R^{-ma_3} = R^{m(u_3 - a_3 )}.
\end{equation}
Thus, 
\begin{equation}\label{eq:Dilation_3}
a_3 = u_3 \qquad \Longrightarrow \qquad \mathcal{H}^m (B^m \cap Y_R^{a_3}) 
\simeq \mathcal{H}^m (B^m ).
\end{equation}

\begin{figure}[t]
\centering
\includegraphics[width=0.9\textwidth]{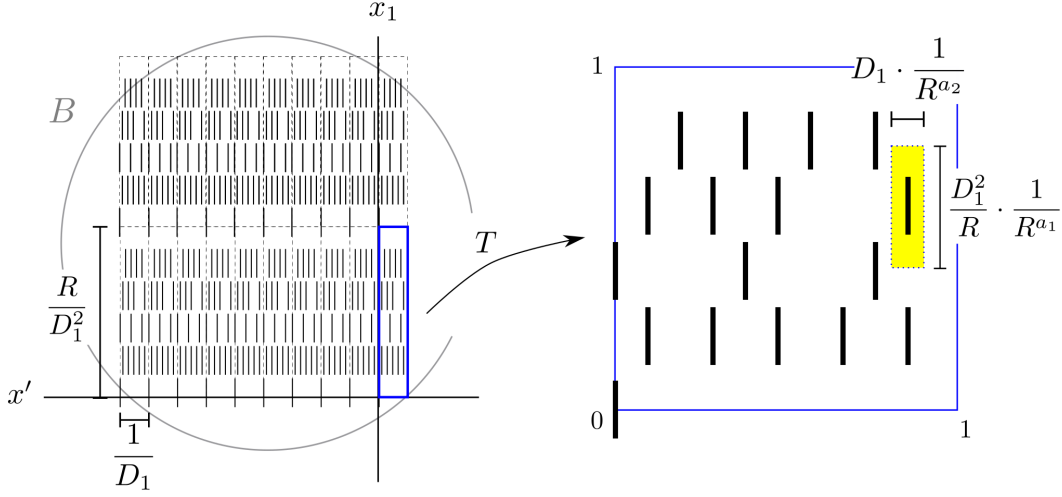}
\caption{In blue, the unit cell $\widetilde{X}_R^{a_1,a_2}$.
On the right, $\Omega_R^{a_1,a_2} = T(\widetilde{X}_R^{a_1,a_2})$.
In black on the right, the image by $T$ of the original slabs, 
and in yellow, 
the image of the slabs dilated by $a_1,a_2$.
To apply the Mass Transference Principle,
we must prove that $\Omega_k^{\vc{a}}$ covers a positive
portion of the unit cell.}
\label{fig:Zoom_Set}
\end{figure}

Regarding $\mathcal{H}^n(B\cap X_R^{a_1,a_2})$,
the set $X_k^{a_1,a_2}$ has periodic a structure, 
as shown in Figure~\ref{fig:Zoom_Set}.
Indeed, under the shrinking condition
\begin{equation}\label{eq:Restriction_Shrinking}
\frac{R}{D_1^2} \ll 1 \quad\iff\quad
	 u_2 - u_1 < \frac{1}{2},
\end{equation}
the set $X_k^{a_1,a_2}$ is a union of copies of 
the unit cell
\begin{equation}
\widetilde{X}_R^{a_1,a_2} =  
	\bigcup_{ \substack{ Q/2 \le q \le Q \\ q \textrm{ odd} } }\,
	\bigcup_{(p_1,p')\in G(q)\cap [0, q)^{n - m}} 
	B^1\bigg(  \frac{R}{D_1^2} \, \frac{p_1}{q}, \frac{1}{R^{a_1}}\bigg)\times
	B^{n - m -1}\bigg( \frac{1}{D_1} \,  \frac{p'}{q}, \frac{1}{R^{a_2}}\bigg),
\end{equation}
which we mark in blue in Figure~\ref{fig:Zoom_Set}. 
In this situation, 
the number of unit cells in a ball $B^{n-m}$ 
is approximately $\mathcal{H}^n(B)D_1^{n - m - 1}D_1^2/R$, 
so
\begin{equation} \label{eq:preliminar_BX}
\mathcal{H}^{n-m}(B^{n-m}\cap X_R^{a_1,a_2}) \simeq 
	\mathcal{H}^{n-m}(B^{n-m}) \, \frac{D_1^{n - m + 1}}{ R } \, \mathcal{H}^{n-m}(\widetilde{X}_R^{a_1,a_2})
\end{equation}
To compute $\mathcal{H}^{n-m}(\widetilde{X}_R^{a_1,a_2})$,
we use the  transformation $T : (x_1,x') \mapsto (D_1^2x_1/R, D_1x')$, 
which sends $\widetilde{X}_R^{a_1,a_2}$ to the set
\begin{equation} \label{eq:Dilation_UC}
\Omega_R^{a_1,a_2} = 
	T\widetilde{X}_R^{a_1,a_2} = 
	\bigcup_{ \substack{ Q/2 \le q  \le Q \\ q \textrm{ odd} }}\,
	\bigcup_{(p_1,p')\in G(q)\cap [0, q)^{n - m}} 
	B^1\bigg(\frac{p_1}{q}, \frac{D_1^2}{R^{1+a_1}}\bigg)\times
	B^{n - m -1}\bigg(\frac{p'}{q}, \frac{D_1}{R^{a_2}}\bigg).
\end{equation}
Since $\mathcal{H}^{n-m}(\widetilde{X}_R^{a_1,a_2}) 
= \mathcal{H}^{n-m}(\Omega_R^{a_1,a_2}) \, R/D_1^{n - m + 1}$, then
from \eqref{eq:BXYtoBX}, \eqref{eq:Dilation_3} and \eqref{eq:preliminar_BX}
we see that 
\begin{equation}
\begin{split}
\mathcal{H}^n(B\cap F_R^{\vc{a}} )
& \simeq 
\mathcal{H}^m (B^m) \, \mathcal{H}^{n-m} (B^{n-m}) \, \mathcal{H}^{n-m}(\Omega_R^{a_1,a_2}) \\
& \simeq \mathcal H^n (B) \, \mathcal{H}^{n-m}(\Omega_R^{a_1,a_2}).
\end{split}
\end{equation}
Thus, having chosen $a_3 = u_3$, 
to verify \eqref{eq:Uniform_local_U}
it suffices to find $a_1,a_2$ such that 
\begin{equation}\label{eq:Objective_Omega_Measure}
\mathcal{H}^{n-m}(\Omega_R^{a_1,a_2}) \ge c > 0, \qquad
\textrm{for } R \gg 1.
\end{equation}
To do so, we use a lemma from \cite{Pierce2021}.
\begin{lem}[Lemma~4.1 of \cite{Pierce2021}] 
\label{thm:almost_disjoint_union}
Let $J$ be a finite set of indices and
$\{I_j\}_{j \in J}$ be a collection of measurable sets in $\R^n$.
Suppose that these sets have comparable size,
that is, $B_0 \le \abs{I_j} \le B_1$ for all $j \in J$, and
that they are regularly distributed in the sense that
\begin{equation} \label{eq:thm:almost_disjoint}
\abs{\{(j,j')\in J \times J \mid 
	I_j\cap I_{j'} \neq \emptyset\}} \le C\abs{J}.
\end{equation} 
Then,
\begin{equation}
\Big|\bigcup_{j \in J} I_j \Big| \ge 
	\frac{B_0}{B_1C}\sum_{j \in J} \abs{I_j}.
\end{equation}
\end{lem}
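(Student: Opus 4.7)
The plan is to prove this by a standard second-moment / Cauchy--Schwarz argument applied to the sum of indicator functions. I would set $f = \sum_{j \in J} \ind_{I_j}$, so that $f$ is supported on $U := \bigcup_{j \in J} I_j$ and $\int f = \sum_{j \in J} \abs{I_j}$. By Cauchy--Schwarz,
\begin{equation}
\Big(\sum_{j \in J}\abs{I_j}\Big)^2 = \Big(\int_U f\Big)^2 \le \abs{U} \int f^2,
\end{equation}
so the task reduces to bounding $\int f^2$ from above.

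Expanding the square gives $\int f^2 = \sum_{(j,j')\in J\times J} \abs{I_j\cap I_{j'}}$. Here the regularity hypothesis \eqref{eq:thm:almost_disjoint} enters: only at most $C\abs{J}$ pairs $(j,j')$ contribute a nonzero term, and each of these terms satisfies $\abs{I_j\cap I_{j'}}\le \min(\abs{I_j},\abs{I_{j'}})\le B_1$. Therefore $\int f^2 \le B_1 C \abs{J}$.

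Combining the two displays yields
\begin{equation}
\abs{U} \ge \frac{\big(\sum_{j \in J}\abs{I_j}\big)^2}{B_1 C \abs{J}}.
\end{equation}
Finally, the lower bound $\abs{I_j}\ge B_0$ gives $\abs{J} \le B_0^{-1}\sum_{j \in J}\abs{I_j}$, and substituting this into the denominator produces the desired estimate $\abs{U}\ge (B_0/(B_1 C))\sum_{j \in J}\abs{I_j}$. There is no real obstacle here; the only delicate point is to apply the size bound $B_1$ to the intersections (not just to the individual $I_j$), which is legitimate since $I_j\cap I_{j'}\subset I_j$. The proof uses the finiteness of $J$ to make all sums literally finite; if one wanted to relax this to countable families, a monotone convergence argument on finite subcollections would suffice.
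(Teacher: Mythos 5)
Your proof is correct and complete: the Cauchy--Schwarz second-moment argument, the bound $\abs{I_j\cap I_{j'}}\le B_1$ on the at most $C\abs{J}$ nonempty intersections, and the final substitution $\abs{J}\le B_0^{-1}\sum_j\abs{I_j}$ all check out. The paper itself gives no proof (it simply cites Lemma~4.1 of \cite{Pierce2021}), and your argument is the standard one used to establish such almost-disjointness lemmas, so there is nothing further to compare.
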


With the aid of Lemma~\ref{thm:almost_disjoint_union},
we adapt  \cite[Lemma~4.2]{Pierce2021} to estimate 
the measure of $\Omega_R^{a_1,a_2}$.
\begin{lem}
\label{thm:Minkowski}
Let $Q \gg 1$, $t_1, t_2 \ge 1$  and $\Omega \subset \R^N$ defined as
\begin{equation}
\Omega = 
	\bigcup_{\substack{ Q/2 \le q \le Q \\ q\textrm{ odd}}}\,
	\bigcup_{\substack{ (p_1, p') \in [0,q)^N \\ \operatorname{gcd}(p_1,q)=1 }}
	B^1\bigg(\frac{p_1}{q}, \frac{1}{Q^{t_1}}\bigg)\times
	B^{N -1}\bigg(\frac{p'}{q}, \frac{1}{Q^{t_2}}\bigg).
\end{equation}  
If
\begin{equation} \label{eq:thm:Minkowski}
t_1 + (N - 1)t_2 = N + 1, 
\end{equation}
there exists $c > 0$ such that
$
\mathcal{H}^N(\Omega) \ge c > 0.
$
\end{lem}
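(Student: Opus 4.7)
The plan is to apply the almost-disjoint-union criterion of Lemma~\ref{thm:almost_disjoint_union} to the family $\{I_\alpha\}_{\alpha \in J}$ of boxes indexed by $\alpha = (q, p_1, p')$ with
\begin{equation*}
I_\alpha = B^1\bigg(\frac{p_1}{q}, \frac{1}{Q^{t_1}}\bigg) \times B^{N-1}\bigg(\frac{p'}{q}, \frac{1}{Q^{t_2}}\bigg).
\end{equation*}
Two preliminary computations set the stage: the hypothesis $t_1 + (N-1)t_2 = N+1$ gives $|I_\alpha| \simeq Q^{-(N+1)}$ (so all sets have comparable size), and counting $(q, p_1, p')$ subject to the coprimality and range conditions yields $|J| \simeq \sum_{q \simeq Q} \phi(q) q^{N-1} \simeq Q^{N+1}$. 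Hence $\sum_\alpha |I_\alpha| \simeq 1$, which is the total mass we aim to preserve in $\Omega$.

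The key step is to verify the regular-distribution hypothesis of Lemma~\ref{thm:almost_disjoint_union}: that the total number of intersecting pairs is $\lesssim |J|$. Since $I_\alpha$ is a product of one-dimensional balls in disjoint coordinates, intersection requires overlap in every coordinate, and these are independent conditions. For a fixed pair $(q,\tilde q)$ with $d = \operatorname{gcd}(q,\tilde q)$, the overlap in coordinate $j$ translates into $|p_j \tilde q - \tilde p_j q| \le 2q\tilde q/Q^{t_j}$; B\'ezout's identity then provides exactly $d$ integer solutions $(p_j, \tilde p_j) \in [0,q) \times [0,\tilde q)$ for each admissible value $k = p_j \tilde q - \tilde p_j q$ (which must satisfy $d \mid k$), giving a per-coordinate count $\lesssim q\tilde q/Q^{t_j} + d$. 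Multiplying across the $N$ coordinates and summing over $(q,\tilde q)$ grouped by the value of $d$, the exponent balance $t_1 + (N-1)t_2 = N+1$ is precisely what forces the total pair count to be $\lesssim |J|$.

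The main obstacle is the arithmetic bookkeeping in the pair count: the sum over pairs with given $d$ introduces contributions that must be absorbed using the balance condition, and the extra coprimality constraint $\operatorname{gcd}(p_1, q) = 1$ in the first coordinate plays a subtle role by forbidding the trivial solution $k = 0$ in that coordinate (which would require $q = \tilde q$), thereby controlling a contribution that would otherwise produce an overcount. Once the regular-distribution hypothesis is verified, Lemma~\ref{thm:almost_disjoint_union} immediately yields $\mathcal{H}^N(\Omega) \gtrsim \sum_\alpha |I_\alpha| \simeq 1$, which is the desired lower bound.
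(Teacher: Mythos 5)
Your proposal follows essentially the same route as the paper: the same reduction to the almost-disjoint-union criterion (Lemma~\ref{thm:almost_disjoint_union}) with $\abs{I_\alpha}\simeq Q^{-(N+1)}$ and $\abs{J}\simeq Q^{N+1}$, and the same pair-counting argument via $d=\operatorname{gcd}(q,\tilde q)$ and the at-most-$d$ representations of each admissible value of $p_j\tilde q-\tilde p_j q$, with the coprimality of $p_1$ and $q$ excluding the value $0$ in the first coordinate exactly as in the paper. The only difference is cosmetic: you keep the extra $+d$ term (from $k=0$ in coordinates $2,\dots,N$) in the per-coordinate count and absorb it in the sum over $d$ using $t_1,t_2\ge 1$, whereas the paper's two-case split ($q=\tilde q$ versus $q\neq\tilde q$) reaches the same bound.
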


\begin{proof}
We apply Lemma \ref{thm:almost_disjoint_union} with
\begin{equation}
J = \{(p_1, p', q) \mid Q/2 \le q \le Q, \quad q\textrm{ odd}, \quad p \in [0,q)^N \, \,  
	\textrm{ and }  \, \, \operatorname{gcd}(p_1,q) = 1\, \}
\end{equation}
and 
\begin{equation}
I_{p_1, p', q} = B^1\bigg(\frac{p_1}{q}, \frac{1}{Q^{t_1}}\bigg)\times
	B^{N -1}\bigg(\frac{p'}{q}, \frac{1}{Q^{t_2}} \bigg).
\end{equation}
By the hypothesis \eqref{eq:thm:Minkowski}, 
$\abs{I_{p_1, p', q}} = Q^{-t_1 - (N-1)t_2} = Q^{- (N + 1)}$. 
Thus, the first hypothesis of Lemma~\ref{thm:almost_disjoint_union} 
is satisfied with $B_0 = B_1$.
On the other hand, 
the size of the index set is
\begin{equation}
\abs{J} = 
	\sum_{\substack{ Q/2 \le q \le Q \\ q\textrm{ odd} }} \varphi(q) \, q^{N - 1}.
\end{equation}
We use the formula $\varphi(q) = q\sum_{d \mid q}\mu(d)/d$, where
$\mu$ is the Möbius function \cite[Sec.~16.3]{hardyWright2008}, to write
\begin{align}\label{eq:Using_Mobius}
\abs{J} &= 
\sum_{\substack{ Q/2 \le q \le Q \\ q\textrm{ odd} }} q^N \, 
	\sum_{d \,  \mid \,  q} \frac{\mu(d)}{d} 
\quad 
\simeq
\quad 
 Q^N \,  \sum_{d \in \mathbb N}\frac{\mu(d)}{d}
	\sum_{\substack{ Q/2 \le q \le Q \\ q\textrm{ odd} }}
		\ind_{\{d \, \mid \,  q\}} \\
&= Q^N  \, \sum_{d \in \mathbb N, \, d \textrm{ odd}}\frac{\mu(d)}{d}
	\sum_{ \substack{ Q/(2d) \le k \le Q/d \\  k \textrm{ odd}}} 1 \\
\end{align}
If $Q$ is large enough, then 
\begin{equation}
|J|   \simeq Q^N \sum_{d \in \mathbb N, \, d \textrm{ odd}} \frac{\mu(d)}{d} \, 
	\frac{Q}{d}  
	\quad  = \quad  Q^{N + 1}\sum_{d \in \mathbb N, \, d \textrm{ odd}} \frac{\mu(d)}{d^2} 
	 \quad  \simeq \quad Q^{N + 1},
\end{equation}
where the last sum is finite because $\mu(d) \in \{-1, 0, 1\}$ for all $d \in \N$. 
Hence, to apply Lemma~\ref{thm:almost_disjoint_union}
we have to prove 
\begin{equation}
\abs{\{(j, j')\in J \times J \,  \mid \, I_j\cap I_{j'} \neq \emptyset\}} \lesssim Q^{N + 1}.
\end{equation}

First, the diagonal contribution of equal indices $j = j'$ is $\abs{J}$,
so it is enough to prove
\begin{equation} \label{eq:almost_disjoint_pq}
\abs{\{(j, j')\in J \times J \mid j \neq j' \textrm{ and }
	I_j\cap I_{j'} \neq \emptyset\}} \lesssim Q^{N + 1}.
\end{equation}
To prove \eqref{eq:almost_disjoint_pq},
let us first fix $q$ and $\widetilde{q}$ and
count all $j = (p_1, p', q)$ and $j'= (\widetilde{p}_1, \tilde{p}', \widetilde{q})$ 
such that $I_j\cap I_{j'} \neq \emptyset$.
In this case, 
\begin{equation} \label{eq:intersection_condition}
\Big|\frac{p_1}{q} - \frac{\widetilde{p}_1}{\widetilde{q}}\Big| < \frac{2}{Q^{t_1}} \qquad \mbox{and} \qquad
\Big|\frac{p_l}{q} - \frac{\widetilde{p}_l}{\widetilde{q}}\Big| < \frac{2}{Q^{t_2}}, \quad
	l = 2, \ldots, N
\end{equation} 
There are two cases:
\begin{itemize}
	\item \textbf{Case $\boldsymbol{q = \widetilde{q}}$.} 
From \eqref{eq:intersection_condition} and $t_1,t_2 \ge 1$ we have that
$0 < \abs{p_l - \widetilde{p}_l} < 2$ for all $l$. 
Thus, for each $q$, 
we can pick $\lesssim Q^N$ pairs $(j, j')$.
Summing over all odd $q$, 
the total contribution is of the order of $Q^{N + 1}$.

	\item \textbf{Case $\boldsymbol{q \neq \widetilde{q}}$.}
	From \eqref{eq:intersection_condition} we have that
\begin{equation} \label{eq:intersection_q_diff}
\abs{\widetilde{q}p_1 - q\widetilde{p}_1} \le 2Q^{2 - t_1} \qquad \mbox{and} \qquad
\abs{\widetilde{q}p_l - q\widetilde{p}_l} \le 2Q^{2 - t_2}, \quad
	l = 2, \ldots, N.
\end{equation}
Let us fix $l = 1,\ldots, N$
and count the number of 
$0\le p_l < q$ and $0 \le \widetilde{p}_l < \widetilde{q}$
that satisfy \eqref{eq:intersection_q_diff}.
Let  $d = \operatorname{gcd}(q, \widetilde{q})$ 
and write $q = sd$, $\widetilde{q} = \widetilde{s}d$ 
such that  $\operatorname{gcd}(s, \widetilde{s}) = 1$.

Call $m = \widetilde{q}p_l - q\widetilde{p}_l$.
We want to count the number of ways we can write $m$ like that,
that is, how many  $0 \leq r_l < q$ and $0 \leq \widetilde{r}_l < \widetilde{q}$
satisfy $\widetilde{q}p_l - q\widetilde{p}_l$?
We would have 
$\widetilde{s}p_l - s\widetilde{p}_l = \widetilde{s}r_l - s\widetilde{r}_l$, 
which implies $s \mid p_l - r_l$,
or equivalently $0 \le r_l = p_l + ks < q$ for some $k \in \N$.
The last inequality can hold at most for $d$ different values of $k$.
Thus, we can write $m$ in at most $d$ different ways.

On the other hand, necessarily $d \mid m$. 
Since $\abs{m} \le 2Q^{2 - t_i}$,
we can work with at most $2Q^{2-t_i}/d$ values of $m$. 
Since each of them can be written in $d$ different ways, 
we conclude that the number of pairs 
$p_l$ and $\widetilde{p}_l$ satisfying \eqref{eq:intersection_q_diff} 
is at most $ Q^{2 - t_i}$.
 
Since $l = 1$ goes with $t_1$ and $l = 2, \ldots, N$ go with $t_2$, 
for each fixed $q$ and $\widetilde{q}$ the number of $p$ and $\widetilde{p}$ 
is at most $Q^{2 - t_1}\, Q^{(2-t_2)(N-1)} = Q^{N-1}$.
Finally, summing over all different $q$ and $\widetilde{q}$ gives 
a total contribution of the order of $Q^{N+1}$. 

%
%

\end{itemize}
The two cases together prove \eqref{eq:almost_disjoint_pq}. 
Thus, we can use Lemma~\ref{thm:almost_disjoint_union} and write
\begin{align}
\mathcal{H}^n(\Omega) &\gtrsim 
	\sum_{\substack{ Q/2 \le q \le Q  \\ q\textrm{ odd}  } }\,
	\sum_{\substack{ (p_1, p') \in [0,q)^N \\ \operatorname{gcd}(p_1,q)=1 }} \abs{I_{p_1, p', q}} 
\quad \simeq \quad  \sum_{\substack{ Q/2 \le q \le Q  \\ q\textrm{ odd}  } }
	\frac{ \varphi(q)\, q^{N - 1} }{ Q^{t_1 + (N - 1)t_2}}  \\
	& 
	\simeq \frac{1}{Q^2} \, \sum_{\substack{ Q/2 \le q \le Q  \\ q\textrm{ odd}  } } \varphi(q) 
	\quad \simeq \quad  1,
\end{align}
where the last equality follows proceeding like in \eqref{eq:Using_Mobius}.
\end{proof}

With Lemma~\ref{thm:Minkowski}
we get the conditions that we need for $a_1$ and $a_2$ 
in order to have \eqref{eq:Objective_Omega_Measure}.
\begin{lem}\label{thm:Lemma_Restriction_a}
For the parameters $u_1, u_2$ satisfying the restrictions
\eqref{eq:Restrictions_Basic}, 
\eqref{eq:Restrictions_Q} and \eqref{eq:Restriction_Shrinking}, 
let $a_1$ and $a_2$ be such that 
\begin{equation}\label{eq:thm:a_restriction_basic}
u_1 \le a_1 \qquad \mbox{and} \qquad u_2 \le a_2, 
\end{equation}
and 
\begin{equation}\label{eq:thm:a_restriction}
a_1 + (n - m - 1)a_2 = (n - m + 1)u_2 - 1 
\end{equation}
Then, there exists $c >0$ such that
\begin{equation} \label{eq:thm:dilation_UC_Large}
\mathcal{H}^{n - m}(\Omega_R^{a_1,a_2}) \ge c > 0,  \qquad
\forall R \gg 1.
\end{equation}
\end{lem}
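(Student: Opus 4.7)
The plan is to recast $\Omega_R^{a_1,a_2}$ so that Lemma~\ref{thm:Minkowski} applies directly, and then simply verify its hypotheses. Substituting the parametrization $Q = R^{2u_2 - u_1 - 1}$ and $D_1 = R^{1 + u_1 - u_2}$ from \eqref{eq:Geometric_Parameters} into the definition \eqref{eq:Dilation_UC} of $\Omega_R^{a_1,a_2}$, a direct exponent computation yields
\begin{equation*}
\frac{D_1^2}{R^{1+a_1}} = Q^{-t_1}, \qquad \frac{D_1}{R^{a_2}} = Q^{-t_2},
\end{equation*}
with
\begin{equation*}
t_1 = \frac{a_1 + 2u_2 - 2u_1 - 1}{2u_2 - u_1 - 1}, \qquad t_2 = \frac{a_2 + u_2 - u_1 - 1}{2u_2 - u_1 - 1}.
\end{equation*}
Therefore $\Omega_R^{a_1,a_2}$ is exactly the set $\Omega$ of Lemma~\ref{thm:Minkowski} with $N = n - m$ and these values of $t_1, t_2$.

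Next, I would verify the three hypotheses of Lemma~\ref{thm:Minkowski}. In the non-degenerate regime we have $2u_2 - u_1 - 1 > 0$ (this is what makes $Q \to \infty$ as $R \to \infty$; the borderline case $2u_2 - u_1 = 1$ forces $Q$ to be bounded and can be treated separately). Clearing this positive denominator, the conditions $t_1 \ge 1$ and $t_2 \ge 1$ become $a_1 \ge u_1$ and $a_2 \ge u_2$, which are precisely the assumptions in \eqref{eq:thm:a_restriction_basic}. A short rearrangement of $t_1 + (n-m-1)t_2 = n-m+1$ then reduces, after multiplying through by $2u_2 - u_1 - 1$, to $a_1 + (n-m-1)a_2 = (n-m+1)u_2 - 1$, which is exactly the balance equation \eqref{eq:thm:a_restriction}.

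With all hypotheses of Lemma~\ref{thm:Minkowski} satisfied, its conclusion gives $\mathcal{H}^{n-m}(\Omega_R^{a_1,a_2}) \ge c > 0$ uniformly for $Q$, and hence $R$, sufficiently large. I do not anticipate a real obstacle here: the genuine content — the Möbius-function sum and the Diophantine near-disjointness count implemented through Lemma~\ref{thm:almost_disjoint_union} — has been packaged once and for all into Lemma~\ref{thm:Minkowski}, and what remains is a purely algebraic identification of the parameters $(t_1, t_2)$ with the geometric exponents $(a_1, a_2, u_1, u_2)$ of the construction.
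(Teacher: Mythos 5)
Your proposal is correct and follows the paper's own proof essentially verbatim: the same identification of the radii $D_1^2/R^{1+a_1}$ and $D_1/R^{a_2}$ with $Q^{-t_1}$ and $Q^{-t_2}$, the same verification that $t_1,t_2\ge 1$ is equivalent to \eqref{eq:thm:a_restriction_basic}, and the same reduction of the balance condition of Lemma~\ref{thm:Minkowski} to \eqref{eq:thm:a_restriction}. The one step you defer, the borderline case $2u_2-u_1=1$ where $Q=1$ and Lemma~\ref{thm:Minkowski} is not applicable, does need to be addressed since \eqref{eq:Restrictions_Q} permits it, but it is immediate (and the paper spells it out): there the constraints \eqref{eq:thm:a_restriction_basic} and \eqref{eq:thm:a_restriction} force $a_1=u_1$ and $a_2=u_2$, and $\Omega_R^{a_1,a_2}$ degenerates to a single box of side lengths comparable to $1$, so its measure is trivially bounded below.
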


\begin{proof}
For $\Omega_R^{a_1,a_2}$, 
which we defined in \eqref{eq:Dilation_UC}, 
we want to apply Lemma~\ref{thm:Minkowski} 
with $N = n - m$ and 
\begin{equation}
\frac{1}{Q^{t_1}} = \frac {D_1^2}{ R^{1 + a_1 }} =  \frac{1}{R^{a_1 - 1 - 2(u_1 - u_2)}} \qquad \mbox{and} \qquad
\frac{1}{Q^{t_2}} = \frac{D_1}{ R^{a_2}} =  \frac{1}{R^{a_2 - 1 - (u_1 - u_2)}}.
\end{equation}
For that, we need $Q  = R^{2u_2 - u_1 - 1} \gg 1$, 
which means $2u_2 - u_1 - 1 > 0$. 
In that case, we get 
\begin{equation}\label{eq:t1t2_a1a2}
 t_1 = \frac{a_1 - 1 - 2(u_1 - u_2)}{ 2u_2 - u_1 - 1 }, 
 \qquad  t_2 = \frac{ a_2 - 1 - (u_1 - u_2)}{ 2u_2 - u_1 - 1 }.
\end{equation} 
The condition $t_1 \geq 1$ implies $a_1 \geq u_1$, 
while $t_2 \geq 1$ implies $a_2 \geq u_2$.
On the other hand, 
replacing \eqref{eq:t1t2_a1a2} in \eqref{eq:thm:Minkowski} we get
the condition
\begin{equation}
a_1 + (n - m - 1) a_2 = (n-m+1) u_2 - 1,
\end{equation}
under which there exists $c>0$ such that 
$\mathcal H^{n-m} (\Omega_R^{a_1,a_2 )} \geq c$. 

According to restriction \eqref{eq:Restrictions_Q}, we are only left with the case 
$2u_2 - u_1 - 1 = 0$, which corresponds to $Q = 1$.
In this case, the set turns into 
\begin{equation}\label{eq:Dilation_UC_2}
\Omega_R^{a_1,a_2} = 
	B^1\bigg(0, \frac{D_1^2}{R^{1+a_1}}\bigg)\times
	B^{n - m -1}\bigg(0, \frac{D_1}{R^{a_2}}\bigg),
\end{equation}
so we get 
$\mathcal H^{n-m} (\Omega_R^{a_1,a_2 )} \geq c > 0$
if we ask $D_1^2 = R^{1+a_1}$ and $D_1 = R^{a_2}$. 
This amounts to $a_1 = u_1$ and $a_2 = u_2$. 
Observe that \eqref{eq:thm:a_restriction} is also satisfied in this case. 
\end{proof}

\begin{rmk}\label{rmk:Restrictions}
The restrictions we found for  the parameters $(u_1, u_2, u_3)$
 and the dilation exponents $(a_1, a_2, a_3)$
are the following:

For the parameters, from \eqref{eq:Restrictions_Basic}, 
\eqref{eq:Restrictions_Q} and \eqref{eq:Restriction_Shrinking} we have
\begin{equation}\label{eq:Restrictions_Parameters}
0 < u_1, u_3 \leq 1/2, \qquad 0 < u_2 \leq 1, \qquad 2u_2 - u_1 \geq 1, \qquad u_2 - u_1 < 1/2.
\end{equation}
In particular, $u_2 > 1/2$. Regarding $(a_1, a_2, a_3)$, we got 
\begin{equation}\label{eq:Restrictions_Dilation}
u_1 \leq a_1 \leq 1/2, \qquad u_2 \leq a_2 \leq 1, \qquad a_3 = u_3, \qquad a_1 + (n-m-1)a_2 = (n-m+1)u_2 - 1.
\end{equation}
From the last restriction in \eqref{eq:Restrictions_Dilation}
together with $a_1 \leq 1/2$ and $a_2 \leq 1$ we get the additional restriction
\begin{equation}\label{eq:Restrictions_Parameters_2}
(n-m+1) u_2 \leq   n - m + 1/2.
\end{equation}
Thus, for $(u_1, u_2, u_3)$ that satisfy \eqref{eq:Restrictions_Parameters} 
and \eqref{eq:Restrictions_Parameters_2},
we can always find $(a_1, a_2, a_3)$ that satisfy \eqref{eq:Restrictions_Dilation}, 
so \eqref{eq:Objective_Omega_Measure} holds
and we can use the Mass Transference Principle. 

The restrictions for $(u_1, u_2, u_3)$ are shown in Figure~\ref{fig:parameter_u}.
\end{rmk}

\begin{figure}
\centering
\includegraphics[width=0.9\textwidth]{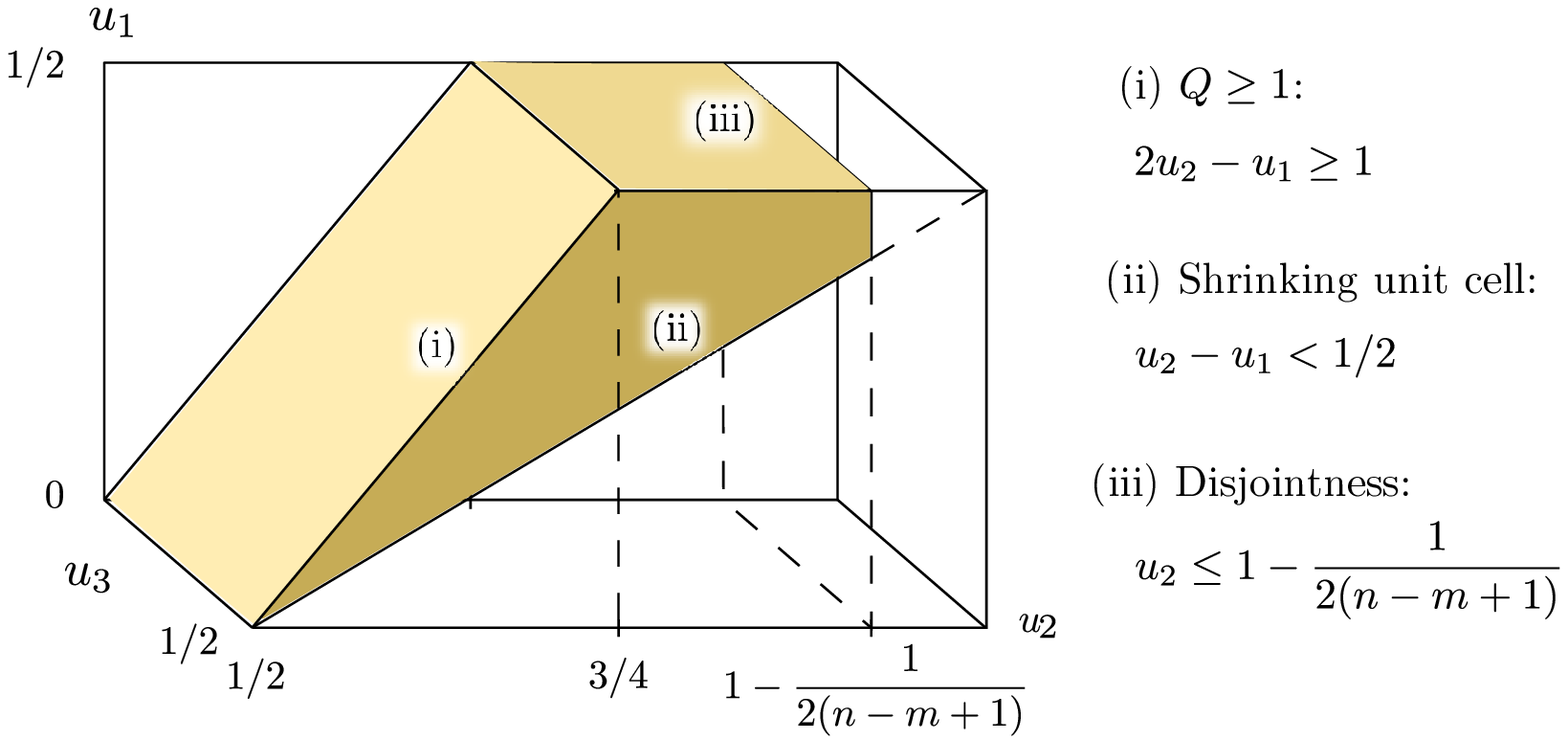}
\caption{Restrictions on $\vc{u} = (u_1, u_2, u_3)$.}
\label{fig:parameter_u}
\end{figure}

According to Remark~\ref{rmk:Restrictions}
we can apply the Mass Transference Principle in Theorem~\ref{thm:MTP}. 
With it, we show that the upper bound given in Proposition~\ref{thm:upper_bound} is sharp. 
\begin{prop}\label{thm:Dimension}
Let $F\subset \R^n$ be the divergence set defined 
in \eqref{eq:F_R} and \eqref{eq:Divergence_Set},
with parameters $(u_1, u_2, u_3)$ as in Remark~\ref{rmk:Restrictions}. 
Then, 
\begin{equation}
\dim F = \min\{\alpha_1, \alpha_2\},
\end{equation}
where
\begin{equation}
\alpha_1 = \alpha_1(u_2, u_3) =  \frac{m-1}{2} + (n-m+1)u_2+mu_3
\end{equation}
and 
\begin{equation}
\alpha_2 = \alpha_2 (u_2, u_3) = \begin{cases}
n-m-3 + 4u_2 + 2mu_3, 
	& \mbox{for} \quad u_2 \le 3/4  \\
n-m+2mu_3, 
	& \mbox{for} \quad u_2 \ge 3/4.
\end{cases}
\end{equation}
\end{prop}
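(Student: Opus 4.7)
The plan is to couple the upper bound already established in Proposition~\ref{thm:upper_bound} with a matching lower bound obtained via the Mass Transference Principle. For the lower bound, I will invoke Theorem~\ref{thm:MTP} applied to the family of slab centers, noting that Lemma~\ref{thm:Lemma_Restriction_a} produces the uniform local ubiquity condition \eqref{eq:Uniform_local_U} for any dilation exponent $\vc{a} = (a_1, a_2, u_3)$ with $u_1 \le a_1 \le 1/2$, $u_2 \le a_2 \le 1$, and $a_1 + (n-m-1)a_2 = (n-m+1)u_2 - 1$. Since $W(\vc{b}) = F$, Theorem~\ref{thm:MTP} then delivers a lower bound for $\dim F$ for every admissible $\vc{a}$, and I take the supremum of these bounds over admissible choices.

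Because $\vc{b} = (1/2, 1, \ldots, 1, 1/2, \ldots, 1/2)$, the set $\mathcal{B}$ of possible minimizers reduces to $\{1/2, 1\}$, so MTP gives two candidate bounds. For $B = 1$, every index lies in $K_1 \cup K_2$ and the formula collapses to $n - \sum_{j \in K_2}(b_j - a_j)$. A short rearrangement using the constraint $a_1 + (n-m-1)a_2 = (n-m+1)u_2 - 1$ yields exactly $\alpha_1 = (m-1)/2 + (n-m+1)u_2 + mu_3$, independently of where on the constraint line we sit.

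For $B = 1/2$, the middle $n-m-1$ coordinates have $a_j = a_2 \ge u_2 > 1/2 = B$, so they all lie in $K_1$ and $K_3 = \emptyset$. The two parameters $a_1 \in [u_1, 1/2]$ and $a_2 \in [u_2, 1]$ on the constraint line must then be chosen to maximize the bound: either push $a_1$ to $1/2$ (placing coordinate~1 into $K_1$), which is feasible iff $u_2 \ge 3/4$ by \eqref{eq:Restrictions_Parameters_2}, or keep $a_1 < 1/2$ and minimize $a_2 = u_2$, forcing $a_1 = 2u_2 - 1$ while using the restriction $2u_2 - u_1 \ge 1$ to ensure $a_1 \ge u_1$. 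A direct computation in each regime yields $n - m + 2mu_3$ for $u_2 \ge 3/4$ and $n - m - 3 + 4u_2 + 2mu_3$ for $u_2 < 3/4$, which is precisely $\alpha_2$.

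The main technical obstacle I anticipate is the careful optimization of $\vc{a}$ along the constraint hyperplane for $B = 1/2$: one must verify that the endpoint extremizers of the constraint segment are indeed admissible in each regime (using \eqref{eq:Restrictions_Parameters_2} and the lower bound on $2u_2 - u_1$) and that they saturate the MTP formula. Once these verifications are complete, Theorem~\ref{thm:MTP} gives $\dim F \ge \min\{\alpha_1, \alpha_2\}$, and combining with Proposition~\ref{thm:upper_bound} yields the desired equality.
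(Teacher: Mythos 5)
Your proposal is correct and follows essentially the same route as the paper: upper bound from Proposition~\ref{thm:upper_bound}, lower bound from Theorem~\ref{thm:MTP} via Lemma~\ref{thm:Lemma_Restriction_a}, the same case split over $B \in \{1/2, 1\}$, and the same optimization of $a_1$ along the constraint hyperplane yielding $\alpha_1$ and the two regimes of $\alpha_2$. The only minor quibble is that the feasibility of $a_1 = 1/2$ hinges on $a_2 \ge u_2$ under the constraint (which is where $u_2 \ge 3/4$ comes from), with \eqref{eq:Restrictions_Parameters_2} only guaranteeing $a_2 \le 1$; otherwise the argument matches the paper's.
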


\begin{proof}
By Lemma~\ref{thm:upper_bound}, we only have to prove the lower bound.

For any $(u_1, u_2, u_3)$ as in Remark~\ref{rmk:Restrictions}, 
we can find $(a_1, a_2, a_3)$ that satisfy \eqref{eq:Restrictions_Dilation}.
Then,  Lemma~\ref{thm:Lemma_Restriction_a} proves \eqref{eq:Objective_Omega_Measure}, 
which allows us to use the Mass Transference Principle in Theorem~\ref{thm:MTP}.   
Since $\boldsymbol{b} = (1/2, 1, 1/2)$, then 
\begin{equation} \label{eq:MTP_Applied}
\dim F \ge \min_{B \in\{1,1/2\}}
	\bigg\{\sum_{j\in K_1(B)} 1 + 
	\sum_{j\in K_2(B)} \Big(1 - \frac{b_j - a_j}{B}\Big) +
	\sum_{j\in K_3(B)} \frac{a_j}{B} \bigg\},
\end{equation}
where
\begin{equation}
\begin{array}{c}
K_1(B) = \{j \mid a_j \ge B\}, \qquad \qquad 
K_2(B) = \{j \mid b_j \le B\}\setminus K_1(B), \\
\\
K_3(B) = \{1, \ldots, n\}\setminus (K_1(B)\cap K_2(B)).
\end{array}
\end{equation}
We compute each term in the minimum \eqref{eq:MTP_Applied} separately:
\begin{itemize}
	\item $B = 1$: If $a_2 < 1$, we get
$K_1(1) = \emptyset$, $K_2(1) = \{1,\ldots,n\}$ and $K_3(1) = \emptyset$,
so the term in braces is
\begin{equation} \label{eq:dim_1}
	n - \sum_{j = 1}^n b_j + \sum_{j = 1}^n a_j \\
	= \frac{m + 1}{2} + 
		a_1 + (n - m - 1)a_2 + ma_3.
\end{equation}
Replacing  \eqref{eq:Restrictions_Dilation} above, we get
\begin{equation}\label{eq:dim_11}
\alpha_1 = 
\alpha_1(u_2, u_3) = 
	\frac{m - 1}{2} + (n - m + 1)u_2 + mu_3.
\end{equation}
If $a_2 = 1$, then $K_1(1) = \{ 2, \ldots, n-m  \}$, $K_2(1) = \{1, n - m + 1, \ldots, n\}$
and $K_3(1) = \emptyset$. Thus, we get
\begin{equation}
(n - m - 1) + (m+1) - (1/2 - a_1) - m(1/2 - a_3) = n - \frac{m+1}{2} + a_1 + ma_3,
\end{equation}
This is equal to \eqref{eq:dim_1}, so we get the same $\alpha_1$.

	\item 
$B = 1/2$: Let us first assume that $a_1, a_3 < 1/2$ so that
$K_1(1/2) = \{2,\ldots, n-m\}$, $K_2(1/2) = \{1, n -m + 1,\ldots,n\}$ and $K_3(1/2) = \emptyset$.
The term in braces is thus
\begin{align}
\alpha_2( u_3, a_1) &= 
	(n - m - 1) + (m + 1) - 
	(1 - 2a_1) - m(1 - 2a_3)\\
	&= n - m - 1 + 2a_1 + 2mu_3. \label{eq:dim_2}
\end{align}
In the case that $a_1 < 1/2$ and that $a_3 = 1/2$, we have
$K_1(1/2) = \{2,\ldots, n\}$, $K_2(1/2) = \{1\}$ and $K_3(1/2) = \emptyset$,
and we get
\begin{align}
\alpha_2(a_1) &= 
(n-1) + 1 - (1 - 2a_1) = n - 1 + 2a_1,
\end{align}
which is the same as \eqref{eq:dim_2} because $u_3 = a_3 = 1/2$. 
Similarly, the cases $a_1 = 1/2$, $a_3 < 1/2$
and $a_1 =  a_3 = 1/2$ yield the same result.
\end{itemize}
Joining the two expressions for the minimum, we get
\begin{equation} \label{eq:dim_Lower_B_raw}
\dim F \ge 
	\min\{\alpha_1(u_2, u_3), \alpha_2(u_3, a_1)\}, \qquad \forall a_1 \text{ like in } \eqref{eq:Restrictions_Dilation}.
\end{equation}
Thus, we want to choose the value of $a_1$ that gives the largest $\alpha_2(u_3, a_1)$. 

According to \eqref{eq:dim_2}, we need to take the largest possible $a_1$.
Since $a_1 \leq 1/2$, in principle we may take $a_1 = 1/2$. 
In view of \eqref{eq:Restrictions_Dilation}, that implies 
\begin{equation}\label{eq:Dilation_Case_1}
(n-m+1) u_2 = 3/2 + (n-m-1) a_2.
\end{equation}
However, we need $a_2 \geq u_2$, which under the 
restriction \eqref{eq:Dilation_Case_1} is equivalent to 
$u_2 \geq 3/4$. 
Thus, we separate two cases:
\begin{itemize}
	\item  If $u_2 \le 3/4$,  then $a_1=1/2$ is admissible, so 
	the maximum for $\alpha(u_3, a_1)$ is
	\begin{equation}\label{eq:dim_2_High}
	\alpha_2(u_3) = n - m + 2mu_3, \qquad \text{ if } u_2 \geq 3/4.
	\end{equation}
	
	\item If $u_2 < 3/4$, then $a_1 = 1/2$ is not admissible, 
	because $a_2 < u_2$. Then, the largest admissible value for $a_1$
	corresponds to $a_2 = u_2$, which in view of \eqref{eq:Restrictions_Dilation} gives
	$a_1 = 2u_2 - 1$. Thus, the maximum $\alpha_2$ is 
	\begin{equation}\label{eq:dim_2_Low}
	\alpha_2(u_2, u_3) = n - m - 3 + 4u_2 + 2m u_3, \qquad \text{ if } u_2 < 3/4. 
	\end{equation}		
\end{itemize}
Consequently, from \eqref{eq:dim_Lower_B_raw}, we obtain
\begin{equation}
\dim F \ge 
	\min\{\alpha_1(u_2, u_3), \alpha_2(u_2, u_3)\},
\end{equation}
where $\alpha_1$ is defined in \eqref{eq:dim_11}
and $\alpha_2$ is defined in \eqref{eq:dim_2_High} and \eqref{eq:dim_2_Low}.
The proof is complete. 
%
\end{proof}

\subsection{The case $\boldsymbol{m=n-1}$}
\label{sec:Dim_Degenerate}

The counterexample is not as interesting in this case because 
the Talbot effect is absent.
We discuss it briefly. 
The set of divergence is actually much simpler, given by
\begin{gather}
F = \limsup_{k \to \infty} F_k
= \limsup_{k \to \infty}	
	\bigcup_{p\in \Z^m} E_k(p) 
		\label{eq:nT_divergence_set} \\
E_k(p) = 
	[-1,0]\times
	B^{n-1}\bigg(\frac{p}{D_{2,k}}, R_k^{-1/2}\bigg),
	\label{eq:noT_slabs}
\end{gather}
so only the parameter $D_2 = R^{u_3}$ survives. 
We use the Mass Transference Principle Theorem~\ref{thm:MTP}
in $\mathbb R^{n-1}$ with $\boldsymbol{a} = (a, \ldots, a)$,
 which corresponds to the original version in 
\cite{BeresnevichVelani2006}. 
The dilation $a$ needed for the local ubiquity condition \eqref{eq:Uniform_local_U}
must satisfy $D_2 = R^{\frac{a}{2(n-1)}}$, that is, $a = 2(n-1)u_3$, 
which implies $\dim F = 1 + 2(n-1)u_3$. 
The dimension can also be computed 
 using the methods in
Section~8.2 of \cite{falconer_book03}.


\section{Sobolev Regularity} \label{sec:Sobolev_Regularity}

We begin by recalling that the Sobolev regularity $s_m = s_m(Q, D_1, D_2)$ of the counterexample 
was given in \eqref{eq:Heuristic_Exponent} by
\begin{equation}
R^{s_m} =  R^{1/4} \, \left( \frac{R}{D_1 Q} \right)^{(n-m-1)/2} \, \left( \frac{R^{1/2}}{D_2} \right)^{m/2 }.
\end{equation}
Using \eqref{eq:Geometric_Parameters}, 
we rewrite it in terms of the geometric parameters $(u_1, u_2, u_3)$ as
\begin{equation}\label{eq:Sobolev_Regularity_2}
s_m(u_2, u_3) = 
	\frac{2n-m-1}{4} - \frac{n-m-1}{2}u_2 - \frac{mu_3}{2}.
\end{equation}

Given a fixed dimension $\alpha$, 
we want to maximize $s_m$.
As we showed in Proposition~\ref{thm:Dimension}, 
the dimension of the divergence set is a function $\alpha(u_2, u_3)$,
so we are imposing the restriction $\alpha = \alpha(u_2, u_3)$. 
This still leaves one degree of freedom $v$ in $s_m(\alpha, v)$,
which we might set either as $u_2$ or as $u_3$.
Let us denote the maximum regularity by $s_m(\alpha) = \max_v s_m(\alpha, v)$.

The case $m=0$ corresponds 
to the counterexample studied 
in \cite{LucaPonceVanegas2021},
which gives the regularity
\begin{equation}
s_m(\alpha) = \frac{n}{2(n+1)} + \frac{n-1}{2(n+1)}(n-\alpha),
\end{equation}
so we focus on $m \geq 1$. 
Fix $\dim F = \alpha$. 
By Proposition~\ref{thm:Dimension}, 
\begin{equation}\label{eq:Alpha}
\alpha = \min \{ \alpha_1(u_2, u_3), \alpha_2(u_2, u_3) \}, 
\end{equation}
where 
\begin{gather}\label{eq:Alphas}
\alpha_1 = \frac{m-1}{2} + (n-m+1)u_2+mu_3
\quad\mbox{and}\quad
\alpha_2 = \begin{cases}
n-m-3 + 4u_2 + 2mu_3, 
	& \mbox{for } u_2 \le 3/4  \\
n-m+2mu_3, 
	& \mbox{for } u_2 \ge 3/4.
\end{cases}
\end{gather}
This is a restriction on $(u_2, u_3)$, which takes the form of
a broken line in the $(u_2, u_3)$ plane.
We want to pick a point $(u_2, u_3)$ that gives the maximum  $s_m(u_2, u_3)$. 
In the arguments that follow,
we suggest the reader to use
Figures~\ref{fig:regularity_max__m_small}, \ref{fig:regularity_max_m_medium} 
and \ref{fig:regularity_max_m_large} as visual support.

According to the restrictions in Remark~\ref{rmk:Restrictions}, 
we have 
\begin{equation}\label{eq:Domain_For_u2u3}
(u_2, u_3) \in \mathcal D = \left[ \, \frac12, \,  1  - \frac{1}{2(n-m+1)} \right] \times \left[ 0, 1/2 \right].
\end{equation}
Let us first determine  in $\mathcal D$
the boundary between the two lines in \eqref{eq:Alpha}.
If $u_2 \geq 3/4$, 
	\begin{equation}
	\alpha_1 \leq \alpha_2 \quad \Longleftrightarrow \quad (n-m+1) u_2 - m u_3  \leq n - \frac{3m}{2} + \frac12,
	\end{equation}
so the boundary is
	\begin{equation}\label{eq:Boundary_Line_1}
	(n-m+1) u_2 - m u_3  = n - \frac{3m}{2} + \frac12, \qquad \text{ when } u_2 \geq 3/4. 
	\end{equation} 
This line crosses the points 
	\begin{equation}
	(u_2,u_3) = \left( 1 - \frac{1}{2(n-m+1)}, \, \frac12  \right) \quad \text{ and } \quad (u_2,u_3) = \left( 1 - \frac{m+1}{2(n-m+1)}, \, 0 \right), 
	\end{equation}		
	so it is completely in $\mathcal D \cap \{ u_2 \geq 3/4 \}$ if 
	\begin{equation}\label{eq:Line_Is_On_The_Right}
	1 - \frac{1}{2(n-m+1)} \geq \frac34  \quad \Longleftrightarrow \quad m \leq \frac{n-1}{3}.
	\end{equation}
This shows that we need to separate cases for $m$,
and it will become evident that we also need to study
the cases $n-3 \leq m \leq n-1$ separately.

\subsection{When $\boldsymbol{m < n-3}$ and $\boldsymbol{m \leq (n-1)/3}$}
\label{sec:SubsectionFirst}

This case is displayed in Figure~\ref{fig:regularity_max__m_small}.
According to \eqref{eq:Line_Is_On_The_Right}, the boundary line \eqref{eq:Boundary_Line_1} 
is completely included in $u_2 \geq 3/4$. 
This suggests that in $u_2 \le 3/4$ we always have $\alpha_1 \leq \alpha_2$. 
Indeed, 
\begin{equation}
\begin{split}
\alpha_1 \leq \alpha_2 \, & \Longleftrightarrow \, \frac{m-1}{2} + (n-m+1)\, u_2 + mu_3 \leq n-m-3 + 4u_2 + 2mu_3 \\
& \Longleftrightarrow (n-m-3)\, u_2 - mu_3 \leq n - \frac{3m}{2} - \frac52,
\end{split}
\end{equation}
and together with $u_2 \leq 3/4$ and $u_3 \geq 0$, 
the condition $m \leq (n-1)/3$ allows us to write
\begin{equation}
(n-m-3)\, u_2 - mu_3 \leq \frac{3}{4}\, (n-m-3) \leq n - \frac{3m}{2} - \frac52.
\end{equation}
Thus, when $u_2 \le 3/4$ we have $\min\{\alpha_1, \alpha_2\} = \alpha_1$.

Let us compute the Sobolev regularity:
\begin{itemize}
	\item  In the region where $\min\{ \alpha_1, \alpha_2\} = \alpha_2$, since $u_2 \geq 3/4$, 
we may write
\begin{equation}\label{eq:alpha2_m_small}
\alpha = n - m + 2mu_3  \quad \Longrightarrow \quad mu_3 = \frac{m - (n-\alpha)}{2}.
\end{equation}
Consequently, $u_3$ is fixed. Replacing in \eqref{eq:Sobolev_Regularity_2}, we get
\begin{equation}\label{eq:s_m_small_alpha2}
s_m(\alpha, u_2) =  \frac{n-\alpha + 1}{4} + \frac{n-m-1}{2}(1 - u_2).
\end{equation}
Thus, to maximize $s_m(\alpha, u_2)$ we need to minimize $u_2$. 
This is attained on the boundary \eqref{eq:Boundary_Line_1}.

	\item In the region where $\min\{ \alpha_1, \alpha_2\} = \alpha_1$ we have
\begin{equation}\label{eq:alpha1_m_small}
\alpha = \frac{m-1}{2} + (n-m+1)u_2 + mu_3, 
\end{equation}
so replacing in \eqref{eq:Sobolev_Regularity_2} we get
\begin{equation}\label{eq:s_m_small}
s_m(\alpha, u_2) = \frac{n - 1 - \alpha}{2} + u_2.
\end{equation}
In this case, to maximize $s_m(\alpha, u_2)$ we need to maximize $u_2$. 
The maximum $u_2$ in this region may be either
on the boundary \eqref{eq:Boundary_Line_1} 
or in $u_3=0$.
\end{itemize}

\begin{figure}[h]
\centering
\includegraphics[width=0.75\linewidth]{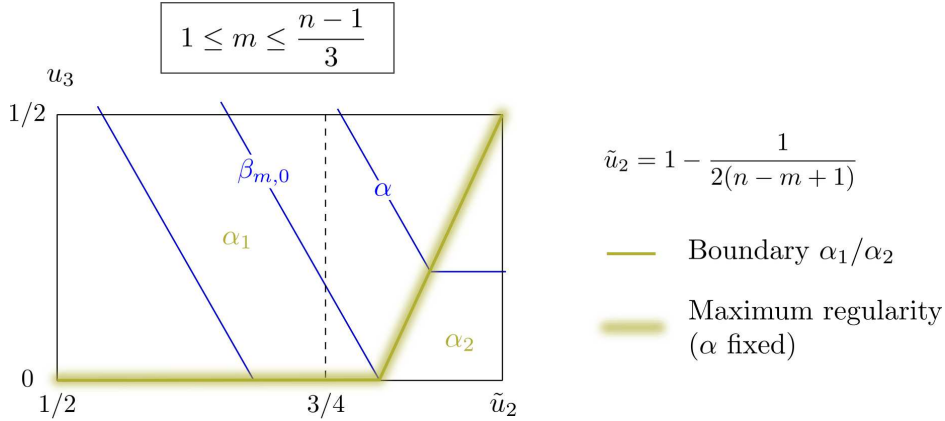}
\caption{For fixed $\alpha$, the maximum regularity is attained on the blurred, green line.} 
\label{fig:regularity_max__m_small}
\end{figure}

Thus, in all cases, given a dimension $\alpha$, 
the maximum $s_m(\alpha) = \max_{u_2} s_m(\alpha, u_2)$ is attained either 
on the boundary \eqref{eq:Boundary_Line_1} 
or on $u_3 =0$. 
Let $\beta_{m,0}$ be
the dimension where this transition happens, that is,
the value $\beta_{m,0}$ such that the line
$\beta_{m,0} = \min\{\alpha_1, \alpha_2\}$ crosses
the intersection of the boundary \eqref{eq:Boundary_Line_1} and $u_3=0$. 
When $\beta_{m,0} = \alpha_2$, we are always in $u_2 \geq 3/4$, 
so we may write $\beta_{m,0} = \alpha_2 = n-m+2mu_3$. 
Since the point of intersection has $u_3 = 0$, we deduce that
\begin{equation}
	\beta_{m,0} = n-m.
\end{equation}
This generates two different cases for $\alpha$:

\begin{itemize}
	\item If $\alpha \le \beta_{m,0}$, then $\alpha = \min\{\alpha_1, \alpha_2\} = \alpha_1(u_2, u_3)$ 
	and the maximum $s_m(\alpha)$ is attained at $u_3=0$. 
	Thus, from \eqref{eq:alpha1_m_small} and \eqref{eq:s_m_small},
	\begin{equation}\label{eq:s_m_small_alpha_small}
	s_m(\alpha)  = \frac{n}{2(n-m+1)} + \frac{n-m-1}{2(n-m+1)} (n-\alpha), \qquad \text{ if } \quad n/2 \leq \alpha \leq n-m. 
	\end{equation}
	Observe that the smallest possible $\alpha$ corresponds to 
	$\alpha = \alpha_1(u_2, u_3)$ crossing the point $(u_2,u_3) = (1/2,0)$, which gives $\alpha_{\text{min}} = n/2$. 

	\item If $\beta_{m,0} \leq \alpha \leq n$, the maximum of $s_m(\alpha,u_2)$
	is attained on the boundary \eqref{eq:Boundary_Line_1}. 
	The intersection between the broken line $\alpha  = \min \{\alpha_1, \alpha_2\}$
	 and the boundary \eqref{eq:Boundary_Line_1} is determined by
	  \begin{equation}
	  (n-m+1)u_2 = n - m - \frac{n-\alpha - 1}{2}
	  \qquad \text{and} \qquad
	  2mu_3 = m - (n-\alpha).
	  \end{equation}
	  Replacing this point either in \eqref{eq:s_m_small_alpha2} 
	  or in \eqref{eq:s_m_small}, we get
	  \begin{equation}\label{eq:s_m_small_alpha_large}
	  s_m(\alpha) = \frac{n-m}{2(n-m+1)} + \frac{n-m}{2(n-m + 1)}(n-\alpha), \qquad \text{ if } \quad n-m \leq \alpha \leq n.
	  \end{equation}
	\end{itemize}

\subsection{When $\boldsymbol{m < n-3}$ and  $\boldsymbol{(n-1)/3 < m \leq n/2 - 1}$}

We display this case at the left of Figure~\ref{fig:regularity_max_m_medium}. 
Now the boundary \eqref{eq:Boundary_Line_1} crosses $u_2 \geq 3/4$
while in $\mathcal D$, and the crossing point is
\begin{equation}\label{eq:Point_Of_Boundary_Line_1}
(u_2, u_3) = \Big(\frac{3}{4}, \, \frac12 - \frac{n-m-1}{4m}\Big).
\end{equation} 
In this case, $\min\{ \alpha_1,\alpha_2 \}$ also changes in $u_2 \le 3/4$
and the boundary is given by 
\begin{equation}\label{eq:Boundary_line_2}
\alpha_1 = \alpha_2 \quad \Longleftrightarrow \quad (n-m-3)u_2 - mu_3 = n - \frac{3m}{2} - \frac52.
\end{equation}
This line has positive slope as long as $m < n-3$, and
it passes through the points \eqref{eq:Point_Of_Boundary_Line_1} and 
\begin{equation}\label{eq:Point_Of_Boundary_Line_2}
(u_2, u_3) = \Big(1 - \frac{m-1}{ 2\, (n-m-3) }, \, 0\Big).
\end{equation}
It makes a difference whether the point \eqref{eq:Point_Of_Boundary_Line_2}
is in $\mathcal D$ or not. 
One immediately sees that 
%
\begin{equation}\label{eq:Point_2_In_D}
\eqref{eq:Point_Of_Boundary_Line_2} \in \mathcal D \quad \Longleftrightarrow \quad 1 - \frac{m-1}{ 2\, (n-m-3) } \geq \frac12 \quad \Longleftrightarrow \quad m \leq \frac{n}{2} - 1,
\end{equation}
which is the case we are considering now.

\begin{rmk}\label{rmk:Optimizing}
We saw in Subsection~\ref{sec:SubsectionFirst} that:
\begin{itemize}
	\item When $\min\{\alpha_1,\alpha_2\} = \alpha_1$ 
	we need to maximize $u_2$. 
	\item When $\min\{\alpha_1,\alpha_2\} = \alpha_2$ and $u_2 \geq 3/4$ 
	we need to minimize $u_2$.
\end{itemize}
Now we have an additional case:
\begin{itemize}
	\item When $\min\{\alpha_1,\alpha_2\} = \alpha_2$ and $u_2 \le 3/4$ we have 
	$\alpha = n - m - 3 + 4u_2 + 2mu_3$, so replacing in \eqref{eq:Sobolev_Regularity_2} we get
	\begin{equation}\label{eq:s_m_middle}
	s_m(\alpha, u_2) = \frac{n-m-2}{2} + \frac{n-\alpha}{4} - \frac{n-m-3}{2}\,  u_2.
	\end{equation}
	Since $m < n-3$, to maximize $s_m(\alpha, u_2)$ we need to minimize $u_2$. 
\end{itemize}
\end{rmk}
Consequently, depending on the value of $\alpha$, 
the maximum of $s_m(\alpha, u_2)$ is attained in the boundary
\eqref{eq:Boundary_Line_1}, in the boundary \eqref{eq:Boundary_line_2} or in $u_3=0$.
Let us determine which $\alpha$ corresponds to each case. 
\begin{itemize}
	\item The interval corresponding to the boundary line \eqref{eq:Boundary_Line_1} 
	is $\alpha \in [\beta_{m,2},n]$, where $\beta_{m,2}$ is such that the  broken line
	$\beta_{m,2} = \min\{ \alpha_1, \alpha_2\}$ 
	crosses the point \eqref{eq:Point_Of_Boundary_Line_1}.
	Thus,
	\begin{equation} \label{eq:beta_m2}
	\beta_{m,2} = n - m + 2m \left(   \frac12 - \frac{n-m-1}{4m} \right) =  \frac{n + m + 1}{2}
\end{equation}	 
	The analysis in this case is identical to that in \eqref{eq:s_m_small_alpha_large}, so
	\begin{equation}
	s_m(\alpha) = \frac{n-m}{2(n-m+1)} + \frac{n-m}{2(n-m+1)}(n-\alpha), \qquad \beta_{m,2} \leq \alpha \leq n.
	\end{equation}
	
	\item The interval corresponding to the boundary \eqref{eq:Boundary_line_2},
	which is in $u_2 \le 3/4$, 
	is $\alpha \in [\beta_{m,1}, \beta_{m,2}]$, where 
	the broken line $\beta_{m,1} = \min \{ \alpha_1, \alpha_2 \}$ 
	crosses the point \eqref{eq:Point_Of_Boundary_Line_2}.
	This means that
	\begin{equation} \label{eq:beta_m1}
	\beta_{m,1} = n - m - 3 + 4 \left( 1 - \frac{m-1}{2(n-m-3)} \right) 
	= n - (m-1) \, \frac{n-m-1}{n-m-3}.
	\end{equation}
	For $\alpha \in [\beta_{m,1}, \beta_{m,2}]$, the point $(u_2,u_3)$ of the 
	broken line $\alpha = \min\{ \alpha_1, \alpha_2\}$ that is in
	the boundary line \eqref{eq:Boundary_line_2} has
	 \begin{equation}
	u_2 = \frac{n + \alpha - 2(m+1) }{2(n-m-1)}.
	\end{equation}
	Thus, the Sobolev regularity we get from \eqref{eq:s_m_middle} is
	\begin{equation}\label{eq:s_m_mid_alpha_mid}
	s_m(\alpha) = \frac12 +   \frac{n-m-2}{2(n-m-1)}(n-\alpha), \qquad \beta_{m,1} \leq \alpha \leq \beta_{m,2}.
	\end{equation}

	\item For the last interval $\alpha \in [\alpha_{\text{min}}, \beta_{m,1}]$
	 we have $\alpha = \alpha_1(u_2, u_3)$, 
	 and the maximum $u_2$ is attained at $u_3=0$. 
	 The procedure is the same as in \eqref{eq:s_m_small_alpha_small}, so 
	 we get 
	\begin{equation}
	s_m(\alpha)  = \frac{n}{2(n-m+1)} + \frac{n-m-1}{2(n-m+1)} (n-\alpha), \qquad n/2 \leq \alpha \leq \beta_{m,1}. 
	\end{equation}	 	 
	 
	\end{itemize}

\begin{figure}[h]
\centering
\includegraphics[width=0.9\linewidth]{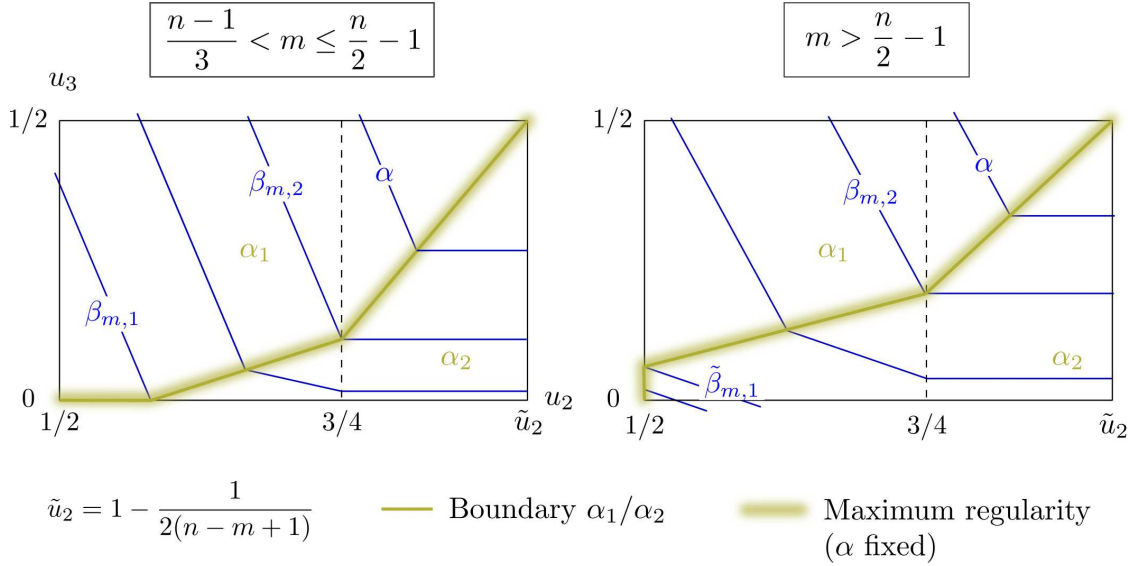}
\caption{For fixed $\alpha$, the maximum regularity is attained on the blurred, green line.} 
\label{fig:regularity_max_m_medium}
\end{figure}

\subsection{When $\boldsymbol{n/2 - 1 < m < n-3}$}

This case is shown at the right of Figure~\ref{fig:regularity_max_m_medium}. 
By \eqref{eq:Point_2_In_D}, we have that 
\eqref{eq:Point_Of_Boundary_Line_2} $\notin \mathcal D$. 
The useful point in this case is the intersection of the boundary \eqref{eq:Boundary_line_2}
with $u_2=1/2$, that is,
\begin{equation}\label{eq:Point_Of_Boundary_Line_2_m_large}
u_2 = \frac12
\qquad\mbox{and}\qquad
u_3 = \frac{m+1-n/2}{m} > 0. 
\end{equation}
In this case, depending on $\alpha$ and again following Remark~\ref{rmk:Optimizing}, 
the maximum $s_m(u_2, u_3)$ is found in the boundary \eqref{eq:Boundary_Line_1}, 
in the boundary \eqref{eq:Boundary_line_2} or on the line $u_2= 1/2$.
Let us determine the ranges for $\alpha$ in each case:
\begin{itemize}
	\item For the interval corresponding to the boundary \eqref{eq:Boundary_Line_1}, 
	the analysis is exactly the same as in the previous case, so we get
	 \begin{equation}
	s_m(\alpha) = \frac{n-m}{2(n-m+1)} + \frac{n-m}{2(n-m+1)}(n-\alpha), \qquad \beta_{m,2} \leq \alpha \leq n.
	\end{equation}
	
	\item The interval corresponding to the boundary \eqref{eq:Boundary_line_2} is 
	$\alpha \in [\widetilde{\beta}_{m,1}, \beta_{m,2}]$,
	 where $\widetilde{\beta}_{m,1} = \min\{\alpha_1, \alpha_2\}$ crosses 
	the point \eqref{eq:Point_Of_Boundary_Line_2_m_large}. 
	Evaluating in $\alpha_1$, we get
	\begin{equation}
	\widetilde{\beta}_{m,1} = \frac{m-1}{2} + \frac{n-m+1}{2} + m+1-\frac{n}{2} = m+1.
	\end{equation}
	For $\alpha \in [\widetilde{\beta}_{m,1}, \beta_{m,2}]$,
	the analysis is the same as in \eqref{eq:s_m_mid_alpha_mid}, so we get
	\begin{equation}
	s_m(\alpha) = \frac12 +   \frac{n-m-2}{2(n-m-1)}(n-\alpha), \qquad \widetilde{\beta}_{m,1} \leq \alpha \leq \beta_{m,2}.
	\end{equation}

	\item The last interval is $\alpha \in [\alpha_{\text{min}}, \widetilde\beta_{m,1}]$, 
	where the maximum is attained in $u_2 = 1/2$. 
	In this case,  $\alpha_{\text{min}}$ is such that 
	$\alpha_{\text{min}} = \min \{ \alpha_1, \alpha_2 \}$ crosses the point $(u_2,u_3) = (1/2, 0)$, 
	that is, 
	\begin{equation}
	\alpha_{\text{min}} = n - m - 1.
	\end{equation}
	Thus, for $\alpha \in [ \alpha_{\text{min}}, \widetilde\beta_{m,1}]$ 
	we have $\alpha = \alpha_2(u_2, u_3)$,
	so replacing  $u_2 = 1/2$ in \eqref{eq:s_m_middle} we get
	\begin{equation}
	s_m(\alpha) = \frac{n-m-1}{4} + \frac{n-\alpha}{4}, \qquad n - m - 1 \leq \alpha \leq \widetilde\beta_{m,1}.
	\end{equation}
	
\end{itemize}

\subsection{When $\boldsymbol{m = n - 3}$}

As shown in Figure~\ref{fig:regularity_max_m_large}, 
the boundary \eqref{eq:Boundary_line_2} 
is now the  horizontal line
\begin{equation}\label{eq:BoundaryHorizontal}
mu_3 = - n + \frac{3m}{2} + \frac52.
\end{equation}
\begin{itemize}
	\item The first interval $\alpha \in [\beta_{m,2}, n]$ does not change with respect to the previous cases:
	\begin{equation}
	s_m(\alpha) = \frac{n-m}{2(n-m+1)} + \frac{n-m}{2(n-m+1)}(n-\alpha), \qquad \beta_{m,2} \leq \alpha \leq n.
	\end{equation}
	
	\item The rest $\alpha < \beta_{m,2}$ are unified in this case. 
	This is because when $u_2 \leq 3/4$
we have $\alpha_2(u_2, u_3) = n - m - 3 + 4u_2 + 2mu_3 = 4u_2 + 2mu_3$.
Thus, for $(u_2, u_3)$ such that 
$\alpha = \alpha_2(u_2, u_3)$, 
the regularity is 
\begin{equation}\label{eq:s_m_n-3_alpha_small}
s_m(u_2, u_3) = \frac{2n - m - 1}{4} - \frac{mu_3}{2} - \frac{n - m - 1 }{2} u_2 
 = \frac{2n-m-1-\alpha}{4}, 
\end{equation}
which is independent of $u_2$ and $u_3$. 
That means that when $\alpha = \alpha_2(u_2, u_3)$ and $u_2 \leq 3/4$, 
all $u_2$ give the same $s_m$. 
In particular, $s_m(u_2, u_3)$ is the same both
in the boundary \eqref{eq:BoundaryHorizontal} and in $u_2 = 1/2$, 
so 
	\begin{equation}
	s_m(\alpha) =  \frac12 + \frac{n-\alpha}{4}, \qquad 2 \leq \alpha \leq \beta_{m-2} = n-1.
	\end{equation}
As in the previous case, $\alpha_{\text{min}} = n-m-1 = 2$. 
\end{itemize}

Observe that this case matches the result of the case $n/2 - 1 \leq m < m-3$.

\begin{figure}[h]
\centering
\includegraphics[width=0.9\linewidth]{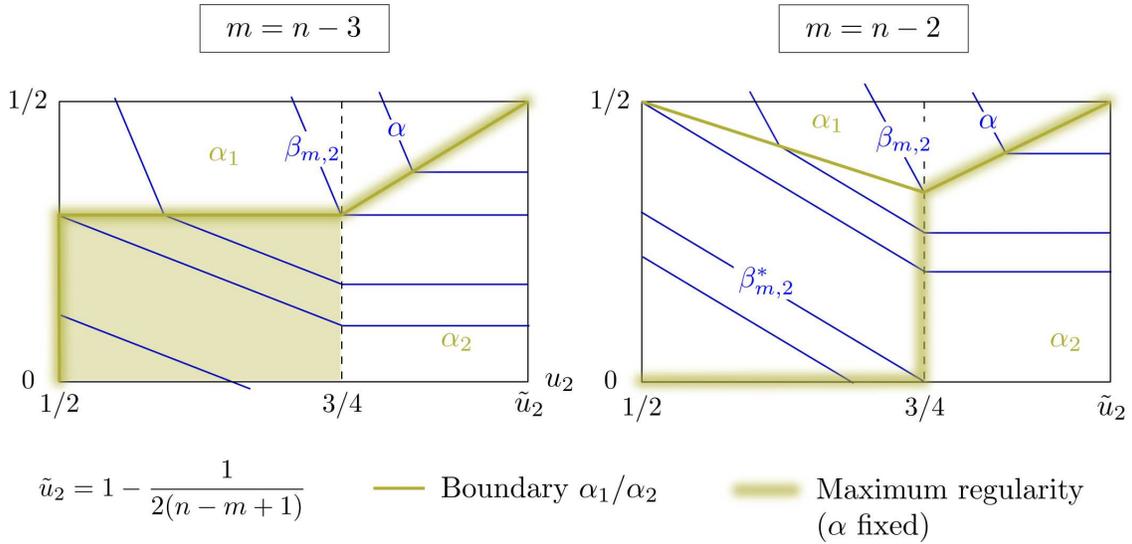}
\caption{For fixed $\alpha$, the maximum regularity is attained on the blurred, green zone.} 
\label{fig:regularity_max_m_large}
\end{figure}

\subsection{When $\boldsymbol{m = n - 2}$}

This case corresponds to the right of Figure~\ref{fig:regularity_max_m_large}. 
Now the boundary \eqref{eq:Boundary_line_2} 
in $u_2 \le 3/4$ takes the form
\begin{equation}
u_2 + (n-2)u_3 = \frac{n-1}{2}.
\end{equation}
It crosses the point $(u_2,u_3) = (1/2,1/2)$, and its slope is $-1/(n-2)$.
Observe that the slope of $\alpha = \alpha_1(u_2,u_3)$ is $-3/(n-2)$, 
while that of $\alpha = \alpha_2(u_2, u_3)$ in $u_2 \le 3/4$ is $-2/(n-2)$. 

When $\alpha = \alpha_1(u_2, u_3)$ we still have \eqref{eq:s_m_small}, 
so we want to maximize $u_2$.
However, when $\alpha = \alpha_2(u_2, u_3)$ and $u_2 \le 3/4$, 
the regularity we computed in \eqref{eq:s_m_middle}
takes the form
\begin{equation}\label{eq:s_m_n-2_alpha_mid}
s_m(\alpha) = \frac{n-\alpha}{4} + \frac{u_2}{2},
\end{equation} 
so we want to maximize $u_2$. 
When $\alpha = \alpha_2(u_2, u_3)$ and $u_2 \geq 3/4$ 
the regularity is \eqref{eq:s_m_small_alpha2},
so we still want to minimize $u_2$. 
Thus, depending on $\alpha$,  the maximum of $s_m(u_2, u_3)$ is attained on 
the boundary \eqref{eq:Boundary_Line_1} in $u_2 \geq 3/4$,
on the line $u_2 = 3/4$ or on the line $u_3=0$. 
We classify $\alpha$ accordingly:

\begin{itemize}
	\item As in all previous cases, in the interval $\alpha \in [\beta_{m,2}, n]$ the result is
	\begin{equation}
	s_{n-2}(\alpha) = \frac{1}{3} + \frac{n-\alpha}{3}, \qquad n-\frac12  \leq \alpha \leq n.
	\end{equation}
	
	\item The second interval is now $\alpha \in [\beta^*_{m,1}, \beta_{m, 2}]$,
	where $\beta^*_{m,1}$ 
	corresponds to the point $(u_2,u_3) = (3/4,0)$, that is, 
	\begin{equation}
	\beta^*_{m,1}  = n - m + 2mu_3 =2.
	\end{equation}
	In this case, the maximum is on $u_2 = 3/4$, so
	from \eqref{eq:s_m_n-2_alpha_mid} we get
	\begin{equation}
	s_{n-2}(\alpha) = \frac38 + \frac{n-\alpha}{4}, \qquad 2 \leq \alpha \leq \beta_{m,2} = n-1/2.
	\end{equation}
	
	\item The last interval is $\alpha \in [\alpha_{\text{min}}, \beta^*_{m,1}]$,
	and as in the previous cases $\alpha_{\text{min}} = n-m-1 = 1$. 
	Now, the maximum is attained at $u_3=0$, 
	and thus, $\alpha = \alpha_2(u_2, u_3) = 4u_2 - 1$. 
	Replacing this in \eqref{eq:s_m_n-2_alpha_mid}
	we get
	\begin{equation}
	s_{n-2}(\alpha) = \frac{n+1}{8} + \frac{n-\alpha}{8}, \qquad 1 \leq \alpha \leq 2.
	\end{equation}
	
\end{itemize}

\subsection{When $\boldsymbol{m = n - 1}$}

From \eqref{eq:Sobolev_Regularity_2}, we have
\begin{equation}
s_{n-1}(u_3) = \frac{n}{4} - \frac{n-1}{2}\, u_3.
\end{equation}
From the dimension in Subsection~\ref{sec:Dim_Degenerate}, 
we have $\alpha = 1 + 2(n - 1)u_3$, 
where we can pick any $0 < u_3 \le 1/2$.
Thus, the regularity is
\begin{equation}
s_{n-1}(\alpha) = \frac{1 + n - \alpha}{4},
	\qquad 1\le \alpha \le n.
\end{equation}

\subsection{Summary of the results of this subsection}
Let us gather the results we got by defining
\begin{gather}
s_{3,m}(\alpha) = \frac{n}{2(n-m+1)} + \frac{n-m-1}{2(n-m+1)}(n-\alpha), \\
s_{4,m}(\alpha) = \frac{n-m}{2(n-m+1)} + \frac{n-m}{2(n-m+1)}(n-\alpha), \\
s_{5,m}(\alpha) = \frac12 + \frac{n-m-2}{2(n-m-1)}(n-\alpha),
\end{gather}
and also, from \eqref{eq:beta_m1} and \eqref{eq:beta_m2}, 
\begin{equation}
\beta_{m, 1} = n - (m-1) \,  \frac{n-m-1}{n-m-3}
\qquad \text{and} \qquad 
\beta_{m, 2} = \frac{n+m+1}{2}.
\end{equation}


\begin{prop} \label{thm:SobolevRegularity}
Let $0 \leq m \leq n-1$ and $s_m(\alpha)$ as below. 
Then, for every $s < s_m(\alpha)$,
there exists $f \in H^s(\R^n)$ 
such that $e^{it\Delta} f$ diverges in
a set of dimension $\alpha$.

The exponent $s_m(\alpha)$ is as follows.
For $0 \leq m \leq n-3$:
\begin{enumerate}[(i)]
	
		\item\label{it:m_small} If $0 \le m \le (n - 1)/3$, 
\begin{equation} \label{eq:thm:m_small}
s_m(\alpha) = \begin{cases}
s_{3, m}(\alpha), & 
	\qquad  n/2 \le \alpha \le n - m, \\
s_{4, m}(\alpha), &
	 \qquad  n - m \le \alpha \le n.
\end{cases}
\end{equation}

		\item\label{it:m_medium} If $(n - 1)/3 < m \le n/2 - 1$, 
		\begin{equation}\label{eq:thm:m_medium}
s_m(\alpha) = \begin{cases}
s_{3, m}(\alpha), & 
	\qquad n/2 \le \alpha \le \beta_{m, 1}, \\
s_{5, m}(\alpha), &
	\qquad \beta_{m, 1} \le \alpha \le \beta_{m, 2}, \\
s_{4, m}(\alpha), &
	\qquad \beta_{m, 2} \le \alpha \le n.
\end{cases}
\end{equation}
		
		\item\label{it:m_large} If $n/2 - 1 < m \le n - 3$, 
		\begin{equation}\label{eq:thm:m_large}
s_m(\alpha) = \begin{cases}
s_{3, m}(\alpha), & 
	\qquad n - m - 1 \le \alpha \le m + 1, \\
s_{5, m}(\alpha), &
	\qquad m + 1 \le \alpha \le \beta_{m, 2}, \\
s_{4, m}(\alpha), &
	\qquad \beta_{m, 2} \le \alpha \le n.
\end{cases}
\end{equation}
\end{enumerate}
On the other hand, if $m = n - 2$, then
\begin{equation}
s_{n-2}(\alpha) = \begin{cases}
\dfrac{n+1}{8} + \dfrac{n-\alpha}{8}, & 
	\qquad 1 \le \alpha \le 2, \\[6pt]
\dfrac38 + \dfrac{n-\alpha}{4}, &
	\qquad 2 \le \alpha \le n - 1/2, \\[6pt]
\dfrac{1}{3} + \dfrac{n-\alpha}{3}, &
	\qquad  n - 1/2 \le \alpha \le n, 
\end{cases}
\end{equation}
and if $m = n - 1$, then
\begin{equation}
s_{n-1}(\alpha) = \frac{1 + n - \alpha}{4},
	\qquad 1\le \alpha \le n.
\end{equation}

\end{prop}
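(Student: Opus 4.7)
The plan is to maximize the affine functional
$s_m(u_2, u_3) = (2n-m-1)/4 - (n-m-1)u_2/2 - mu_3/2$
from \eqref{eq:Sobolev_Regularity_2} over $(u_2, u_3)$ in the rectangle $\mathcal D$ of \eqref{eq:Domain_For_u2u3}, subject to the piecewise-linear constraint $\min\{\alpha_1(u_2,u_3), \alpha_2(u_2,u_3)\} = \alpha$ furnished by Proposition~\ref{thm:Dimension}. Because the objective is linear and the feasible set is a broken line inside a rectangle, the maximizer lies at a breakpoint or at a side of $\mathcal D$; the entire problem therefore reduces to locating these vertices explicitly.

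First I would describe the geometry. Inside $\mathcal D$, the locus $\alpha_1 = \alpha_2$ consists of the two segments \eqref{eq:Boundary_Line_1} (valid for $u_2 \ge 3/4$) and \eqref{eq:Boundary_line_2} (valid for $u_2 \le 3/4$). These, together with the sides of $\mathcal D$, chop the admissible curve $\{\alpha = \min\{\alpha_1, \alpha_2\}\}$ into at most three linear pieces. On each piece I would eliminate one variable using the active constraint and substitute into $s_m$, obtaining the one-variable reductions \eqref{eq:s_m_small_alpha2}, \eqref{eq:s_m_small} and \eqref{eq:s_m_middle}. Reading off the sign of the $u_2$-coefficient gives the content of Remark~\ref{rmk:Optimizing}: on an $\alpha = \alpha_1$ piece one pushes $u_2$ to its largest admissible value, while on either $\alpha = \alpha_2$ piece one pushes $u_2$ to its smallest, at least when $m < n-3$.

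Next I would case-split on $m$ according to the relative position of the two boundary lines and the rectangle $\mathcal D$. The thresholds come directly from \eqref{eq:Line_Is_On_The_Right} and \eqref{eq:Point_2_In_D}: for $m \le (n-1)/3$ line \eqref{eq:Boundary_Line_1} remains in $u_2 \ge 3/4$ and line \eqref{eq:Boundary_line_2} is inactive, yielding only the two regimes \eqref{eq:thm:m_small}; for $(n-1)/3 < m \le n/2 - 1$ line \eqref{eq:Boundary_line_2} enters $\mathcal D$ with its left endpoint \eqref{eq:Point_Of_Boundary_Line_2} still admissible, producing the three regimes in \eqref{eq:thm:m_medium}; for $n/2 - 1 < m < n-3$ the endpoint \eqref{eq:Point_Of_Boundary_Line_2} exits $\mathcal D$ and is replaced by \eqref{eq:Point_Of_Boundary_Line_2_m_large}, giving \eqref{eq:thm:m_large}. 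In each regime, the optimizer is the vertex where the relevant constraint piece meets either another constraint piece or a side of $\mathcal D$; back-substitution into $s_m$ then delivers one of $s_{3,m}$, $s_{4,m}$ or $s_{5,m}$, and the transition dimensions are precisely $\beta_{m,1}$, $\beta_{m,2}$, $n-m$, $m+1$ and $n-m-1$ depending on the case. The borderline values $m = n-3$, $m = n-2$, $m = n-1$ each require a small separate argument: in the first, \eqref{eq:Boundary_line_2} degenerates to the horizontal line \eqref{eq:BoundaryHorizontal} and $s_m$ becomes constant along every $\alpha = \alpha_2$ piece, collapsing two regimes into one; in the second, the slope relations between $\alpha_1$, $\alpha_2$ and $s_m$ force the optimum onto $u_2 = 3/4$ for an intermediate range of $\alpha$; in the third, the construction degenerates as in Subsection~\ref{sec:Dim_Degenerate} and only $u_3$ survives as a parameter.

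The main obstacle is not any single estimate but the bookkeeping: in every sub-case one must verify that the candidate vertex actually lies in $\mathcal D$ and meets the full list of restrictions \eqref{eq:Restrictions_Parameters}--\eqref{eq:Restrictions_Parameters_2}, confirm that the value of $s_m$ one gets from the $\alpha = \alpha_1$ side matches the value from the adjacent $\alpha = \alpha_2$ side at each breakpoint (so that $s_m(\alpha)$ is continuous), and ensure that the extremal vertex is indeed a maximum and not a local minimum by comparing with the next vertex along the broken line. Figures~\ref{fig:regularity_max__m_small}, \ref{fig:regularity_max_m_medium} and \ref{fig:regularity_max_m_large} are the natural bookkeeping device, and carrying out the above plan on each of the regions drawn there yields exactly the formulas claimed in the proposition.
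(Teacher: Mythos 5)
Your proposal is correct and follows essentially the same route as the paper: Section~\ref{sec:Sobolev_Regularity} carries out exactly this constrained linear optimization of $s_m(u_2,u_3)$ over the broken line $\alpha=\min\{\alpha_1,\alpha_2\}$ inside $\mathcal D$, with the same case split on $m$ driven by \eqref{eq:Line_Is_On_The_Right} and \eqref{eq:Point_2_In_D}, the same one-variable reductions, and the same separate treatment of the degenerate cases $m=n-3$, $m=n-2$, $m=n-1$.
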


The reader may want to compare the first two cases in Proposition~\ref{thm:SobolevRegularity}
with Lemma~3.2 in \cite{Du2020}; 
in Du's paper replace 
$d$ by $n + 1$,
$m$ by $m + 1$, and
$\kappa_i$ by $s_{i, m}$.


\section{Maximum Regularity}
	\label{sec:Maximum_Regularity}

For each $0 \leq m \leq n-1$, 
Proposition~\ref{thm:SobolevRegularity} gives 
the regularity $s_m(\alpha)$ for the counterexample. 
Thus, we immediately get the following theorem.
\begin{thm}\label{thm:Short}
Let $n/2 \leq \alpha \leq n$. 
For every $0 \leq m \leq n-1$, and for  $s_m(\alpha)$
as in Proposition~\ref{thm:SobolevRegularity}, 
define
\begin{equation} \label{eq:Grand_Maximum}
s(\alpha) = 
	\max_{0\le m \le n - 1} s_m(\alpha).
\end{equation}
Then, for $s < s(\alpha)$
there exists $f \in H^s(\R^n)$ 
such that $e^{it\Delta} f$ diverges in
a set of dimension $\alpha$.
\end{thm}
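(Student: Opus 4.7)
The plan is direct: this theorem is essentially a packaging of the work already carried out in Sections~\ref{sec:Counterexample}, \ref{sec:Dimension}, and \ref{sec:Sobolev_Regularity}. For each individual $m \in \{0, 1, \dots, n-1\}$, Proposition~\ref{thm:SobolevRegularity} already produces, for every $s < s_m(\alpha)$, a function $f_m \in H^s(\R^n)$ whose Schr\"odinger evolution diverges on a set of Hausdorff dimension $\alpha$. Taking the pointwise maximum over the finite index set $\{0, 1, \dots, n-1\}$ and reading off the best $m$ for each $\alpha$ yields the theorem.

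Concretely, I would argue as follows. Fix $\alpha \in [n/2, n]$ and let $s < s(\alpha) = \max_{0 \le m \le n-1} s_m(\alpha)$. Since the maximum is over a finite set, there exists $m^\star \in \{0, 1, \dots, n-1\}$ attaining it, so $s_{m^\star}(\alpha) = s(\alpha) > s$. Applying Proposition~\ref{thm:SobolevRegularity} with this choice of $m^\star$ produces an $f \in H^s(\R^n)$ whose set of divergence for $e^{it\Delta} f$ has Hausdorff dimension exactly $\alpha$. This $f$ is the desired counterexample, and the theorem follows.

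There is no real obstacle to overcome here because all of the analytic and geometric content has already been absorbed into Proposition~\ref{thm:SobolevRegularity}: the construction of the initial data in Section~\ref{sec:Counterexample}, the Mass Transference Principle computation of $\dim F$ in Section~\ref{sec:Dimension}, and the optimization of $(u_1, u_2, u_3)$ performed case by case in Section~\ref{sec:Sobolev_Regularity}. The only step left is the trivial combinatorial one of selecting, among the $n$ admissible intermediate dimensions, the one producing the largest Sobolev exponent at the given $\alpha$. In the companion section that produces Theorem~\ref{thm:Main_Theorem}, this maximum is then worked out explicitly through a piecewise comparison between the curves $s_{3,m}$, $s_{4,m}$, $s_{5,m}$ and the thresholds $\beta_m$, but Theorem~\ref{thm:Short} itself needs only the abstract observation that $s(\alpha)$ is the maximum of finitely many functions, each of which is realized by an honest counterexample.
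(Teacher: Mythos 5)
Your proposal is correct and is exactly the paper's argument: the authors state that Theorem~\ref{thm:Short} follows ``immediately'' from Proposition~\ref{thm:SobolevRegularity}, precisely by picking the $m^\star$ attaining the finite maximum in \eqref{eq:Grand_Maximum} and invoking the proposition for that $m^\star$. No further comment is needed.
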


Our aim in this section is to dissect  this quantity. 
First, 
we show that in the maximum \eqref{eq:Grand_Maximum} it suffices 
to consider small $m$.
\begin{lem}
Let $m_1 = \floor{n/2 - 1}$. Then,
\begin{equation} \label{eq:Maximum_to_m_small}
s(\alpha) = \max_{0\le m \le m_1} s_m(\alpha)
\end{equation}
In particular, 
\begin{equation}
s(\alpha) = s_0(\alpha), \qquad \text{ when } n=2, 3. 
\end{equation}
\end{lem}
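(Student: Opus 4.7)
The plan is to prove the stronger pointwise inequality
\begin{equation}
s_m(\alpha) \le s_{m-1}(\alpha), \qquad \text{for every } m \text{ with } m_1 < m \le n - 1 \text{ and every } \alpha \in [n/2, n],
\end{equation}
and then iterate $m \to m-1 \to \cdots \to m_1$ to conclude $s_m(\alpha) \le s_{m_1}(\alpha) \le \max_{0 \le m' \le m_1} s_{m'}(\alpha)$. This immediately gives the main statement, and the specialization to $n = 2, 3$ follows since $m_1 = 0$ forces the right-hand side to equal $s_0(\alpha)$.

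The core ingredient is a monotonicity-in-$m$ analysis of the constituent blocks appearing in Proposition~\ref{thm:SobolevRegularity}. A direct calculation yields
\begin{equation}
s_{4,m}(\alpha) - s_{4,m-1}(\alpha) = -\frac{n-\alpha+1}{2(n-m+1)(n-m+2)}, \qquad s_{5,m}(\alpha) - s_{5,m-1}(\alpha) = -\frac{n-\alpha}{2(n-m-1)(n-m)},
\end{equation}
both of which are non-positive on $[n/2, n]$; moreover, the linear expression $\frac{n-m-1}{4} + \frac{n-\alpha}{4}$ derived in Subsection~\ref{sec:Sobolev_Regularity} as the first piece of $s_m$ in the range $n/2 - 1 < m \le n - 3$ is trivially strictly decreasing in $m$. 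Crucially, for $m > m_1$ the formula $s_m(\alpha)$ is assembled \emph{only} from these three decreasing blocks (with the specials $m = n-2, n-1$ treated separately), whereas the $s_{3,m}$ block -- which instead satisfies $s_{3,m}(\alpha) - s_{3,m-1}(\alpha) = (\alpha - n/2)/[(n-m+1)(n-m+2)] \ge 0$ on $[n/2, n]$ -- only appears for $m \le m_1$ (cases (i) and (ii) of Proposition~\ref{thm:SobolevRegularity}). This asymmetry is precisely what allows the range of $m$ in the maximum to be truncated at $m_1$.

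The block-by-block comparison against $s_{m-1}$ requires matching the subintervals where each block is active, since the piecewise breakpoints $n-m-1$, $m+1$, $\beta_{m,2}$ shift with $m$; at the boundary $m = m_1 + 1$ one additionally needs to track the transition of $s_{m-1}$ from case (iii) formulas to case (ii) or (i) formulas. This domain-matching at the breakpoints is the main technical obstacle; once resolved, the inequality $s_m \le s_{m-1}$ on $[n/2, n]$ follows piece by piece from the monotonicity identities above plus continuity at the internal breakpoints. The boundary cases $m = n-2$ and $m = n-1$ are handled by direct comparison of the closed-form expressions of Proposition~\ref{thm:SobolevRegularity} against the appropriate piece of $s_{n-3}$ and $s_{n-2}$, respectively, and this closes the iteration.
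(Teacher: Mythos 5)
Your route is genuinely different from the paper's. The paper does not iterate consecutive comparisons: it shows directly that $s_m(\alpha) \le s_0(\alpha)$ for every $m > m_1$ and every $n/2 \le \alpha \le n$, exploiting that $s_0 = s_{3,0}$ is a \emph{single} affine function on this range while each $s_m$ is piecewise affine, so the inequality only needs to be checked at the breakpoints of $s_m$ (at $\alpha \in \{n/2,\, m+1,\, (n+m+1)/2,\, n\}$ when $m_1 < m \le n-3$, plus the analogous points for $m = n-2, n-1$). That reduces the whole lemma to a handful of explicit scalar inequalities. Your three monotonicity identities are computed correctly, and the chain $s_m \le s_{m-1}$ for $m > m_1$ does appear to be true, so your approach is viable in principle; but it is strictly more work than the paper's, since each link of the chain compares two piecewise affine functions with interlaced breakpoints.

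The genuine gap is precisely the step you defer. The ``domain-matching'' is not bookkeeping that ``follows from the monotonicity identities plus continuity'': on each subinterval where $s_m$ and $s_{m-1}$ are given by blocks of \emph{different} types, the needed inequality is new and is not a consequence of your identities. The worst instance is the link $m = m_1+1$ versus $m = m_1$: on $[n/2, \beta_{m_1,1}]$ you must compare the first affine piece of $s_{m_1+1}$ (and then $s_{5,m_1+1}$) against $s_{3,m_1}$, and since $s_{3,\cdot}$ is \emph{increasing} in $m$ for $\alpha > n/2$, none of your three identities applies; one needs direct evaluations such as $s_{m_1+1}(n/2) = \tfrac{n-m_1-2}{4} + \tfrac{n}{8} \le \tfrac{n}{4} = s_{3,m_1}(n/2)$, which holds only because $m_1 \ge n/2 - 2$. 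Similar cross-type checks are required on $[\beta_{m-1,2}, \beta_{m,2}]$ ($s_{5,m}$ vs.\ $s_{4,m-1}$) and on $[m, m+1]$ (first affine piece of $s_m$ vs.\ $s_{5,m-1}$); each does reduce, by affineness, to two endpoint inequalities, but at those endpoints a same-type comparison via continuity is available only sometimes, so several independent inequalities remain to be verified before the chain closes. One further caveat: your structural claim that for $m > m_1$ the function $s_m$ is assembled \emph{only} from decreasing blocks relies on reading the first piece as $\tfrac{n-m-1}{4} + \tfrac{n-\alpha}{4}$ (the formula derived in Section~\ref{sec:Sobolev_Regularity}), whereas Proposition~\ref{thm:SobolevRegularity}\ref{it:m_large} states that piece as $s_{3,m}$; under the latter reading the ``only decreasing blocks'' assertion fails, so this discrepancy must be resolved for your argument to stand as written.
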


\begin{proof}
The objective is to discard the contribution of every $m > m_1$ to the maximum. 
For that, we are going to prove that $s_m(\alpha) \leq s_0(\alpha)$ 
for $n/2 \leq \alpha \leq n$. 

First observe that for $\alpha = n$, 
$s_m(n) \le s_0(n)$ holds for all $m$. 
Thus, we may work with $\alpha < n$. 
We now study each $m $ separately. 

\begin{itemize}
	\item For $m=n-1$, from Proposition~\ref{thm:SobolevRegularity} we have
$s_{n-1}(\alpha) = (1 + n - \alpha)/4$ for every $1 \leq \alpha \leq n$. 
Since $s_{n-1}(n/2) \le s_0(n/2) = n/4$ and $s_{n-1}(n) \le s_0(n)$,
we deduce $s_{n-1}(\alpha) \leq s_0(\alpha)$ for all $\alpha$, 
so we may discard $s_{n - 1}$.
\end{itemize}

In particular, when $n=2$ we get $s(\alpha) = s_0(\alpha)$. 
Thus, we continue with $n \geq 3$. 
\begin{itemize}
	\item If $m = n - 2$,  it suffices to show that 
$s_{n-2}(\alpha) \le s_0(\alpha)$ for $\alpha = n/2$ and $n-1/2$.
When $n \ge 4$ we have 
\begin{gather}
s_{n-2}(n/2) = \frac{n + 3}{8} \le \frac{n}{4} = s_0(n/2) \quad  \iff  \quad 
	3 \le n, \\
s_{n-2}(n - 1/2) = \frac{1}{2} \le \frac{3n - 1}{4(n + 1)} =  s_0(n - 1/2)  \quad   \iff  \quad
	3 \le n.
\end{gather}
When $n = 3$ the point $\alpha = n/2$ changes, but we still have
\begin{equation}
s_{n-2}(n/2) = \frac{11}{16} < \frac{3}{4} = s_0(n/2).
\end{equation}
Hence, we may discard $s_{n - 2}$. 
\end{itemize}

In particular, if $n=3$ we get $s(\alpha) = s_0(\alpha)$, 
and if $n=4$ we get $s(\alpha) = \max\{s_0(\alpha), s_1(\alpha)\}$. 
Thus, we continue with $n \ge 5$.

\begin{itemize}
	\item Let $m_1 < m \le n - 3$.
	From \eqref{eq:thm:m_large},
	it suffices to show that 
$s_m(\alpha) \le s_0(\alpha)$ for $\alpha \in \{ n/2,  m + 1, (n + m + 1)/2 \}$.
For $\alpha = n/2$, we have $s_m(n/2) = n/4 = s_0(n/2)$.
For $\alpha = m + 1$,
\begin{equation}
s_m(m+1) = \frac{n - m - 1}{2} \le 
	 \frac{n}{2(n + 1)} + \frac{n - 1}{2(n + 1)}(n - m - 1) = s_0(m+1)
\end{equation}
holds if and only if $n/2 - 1 \le m$.
In particular, it holds for $m > m_1$. 
For $\alpha = (n + m + 1)/2$, 
\begin{equation}
s_m(\alpha) = \frac{n - m}{4} \le 
	 \frac{n}{2(n + 1)} + \frac{n - 1}{4(n + 1)}(n - m -1) = s_0(\alpha)
\end{equation}
holds if and only if $m \geq (n-1)/2$. In particular, it holds when $m > m_1$. 
\end{itemize}
\end{proof}

Now we determine the maximum regularity among the small $m$.
\begin{lem}\label{thm:Lemma_Small_m}
Let $n \ge 4$ and $m_0 = \floor{(n - 1)/3}$, and define $s^S(\alpha) = 
	\max_{0 \le m \le m_0} s_m(\alpha)$. 
	Then,
\begin{equation}
s^S(\alpha) 
= \begin{cases} \label{eq:thm:sS}
s_{3,m_0}(\alpha), &
	\qquad n/2 \le \alpha \le n - m_0, \\
s_{4,m}(\alpha), & 
	\qquad  n - m \le \alpha \le n - m + \dfrac{n - 2m}{n - m}, \\
s_{3,m-1}(\alpha), &
	\qquad  n - m + \dfrac{n - 2m}{n - m} \le \alpha \le n - m + 1,
\end{cases}
\end{equation}
where $m$ ranges from 1 to $m_0$. 
Moreover, 
\begin{equation} \label{eq:thm:sS_maximum_dim_large}
s(\alpha) = s^S(\alpha), \qquad
\textrm{for } n - m_0 \le \alpha \le n.
\end{equation}
In particular, 
\begin{equation} \label{eq:thm:sS_s}
s(\alpha) = s^S(\alpha), \qquad
\textrm{for } \quad n/2 \le \alpha \le n, \qquad \text{ when } n=4, 5, 7. 
\end{equation}
\end{lem}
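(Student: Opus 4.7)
The plan is to proceed in three parts: (a) establish monotonicity of the basic pieces $s_{3,m}$ and $s_{4,m}$ as functions of $m$, (b) assemble these monotonicities into the explicit formula for $s^S$, and (c) rule out any new contribution from the ``medium'' values $m_0 < m \le m_1$ on the range $[n-m_0, n]$.

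For (a), a direct differentiation in $m$ (treating $m$ as a continuous variable in the closed-form expressions) gives
\begin{equation}
\partial_m s_{3,m}(\alpha) = \frac{2\alpha - n}{2(n-m+1)^2}, \qquad \partial_m s_{4,m}(\alpha) = -\frac{n-\alpha+1}{2(n-m+1)^2},
\end{equation}
so for $\alpha > n/2$ the map $m\mapsto s_{3,m}(\alpha)$ is strictly increasing and $m\mapsto s_{4,m}(\alpha)$ is strictly decreasing. Restricting to integer $m$ then yields the corresponding discrete monotonicity.

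For (b), I split $[n/2, n]$ along the breakpoints $\alpha = n-m$ ($0\le m\le m_0$) coming from Proposition~\ref{thm:SobolevRegularity}(i). On $[n/2, n-m_0]$ every $s_m(\alpha)$ ($0\le m\le m_0$) equals $s_{3,m}(\alpha)$, so (a) immediately puts the maximum at $m=m_0$. On each subinterval $[n-m, n-m+1]$ with $1\le m\le m_0$, the validity windows select $s_{3,m'}$ only for $m'\le m-1$ and $s_{4,m'}$ only for $m'\ge m$, and (a) narrows the competition to $s_{3,m-1}(\alpha)$ versus $s_{4,m}(\alpha)$. Solving $s_{3,m-1}(\alpha) = s_{4,m}(\alpha)$ simplifies cleanly to $\alpha = n-m + (n-2m)/(n-m)$, and evaluating the two candidates at the endpoints $\alpha=n-m$ and $\alpha=n-m+1$ confirms that $s_{4,m}$ dominates to the left of the crossover and $s_{3,m-1}$ to the right, yielding the stated formula for $s^S(\alpha)$.

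For (c), the small-$n$ claim \eqref{eq:thm:sS_s} is immediate, since $n\in\{4,5,7\}$ gives $m_0=m_1$, and the preceding lemma reduces $s(\alpha)=\max_{0\le m\le m_1} s_m(\alpha)$ to $s^S(\alpha)$ on all of $[n/2,n]$. The substantive claim \eqref{eq:thm:sS_maximum_dim_large} requires showing $s_m(\alpha)\le s^S(\alpha)$ for every $m_0<m\le m_1$ and $\alpha\in[n-m_0,n]$; Proposition~\ref{thm:SobolevRegularity}(ii)--(iii) writes $s_m$ piecewise as $s_{3,m}$, $s_{5,m}$, or $s_{4,m}$. The $s_{4,m}$-branch is straightforward from (a) since $s_{4,m}(\alpha)\le s_{4,m_0}(\alpha)\le s^S(\alpha)$; the $s_{3,m}$-branch is only active when $\alpha\le\beta_{m,1}$, and can be handled by bounding it against $s_{3,m_0}$ on this active sub-range. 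The main obstacle I anticipate is the middle piece $s_{5,m}$, which does not appear anywhere in $s^S$. My strategy is to use the linearity of $s_{5,m}$ in $\alpha$: the inequality $s_{5,m}(\alpha)\le s^S(\alpha)$ on $[\beta_{m,1},\beta_{m,2}]\cap[n-m_0,n]$ reduces, by comparing linear functions against the piecewise-linear concave-looking envelope $s^S$, to checking only the endpoints $\alpha=\beta_{m,1}$ and $\alpha=\beta_{m,2}$, where $s_{5,m}$ coincides with $s_{3,m}$ and $s_{4,m}$ respectively, bringing us back to the previous two cases. The remaining work is the bookkeeping to verify that $\beta_{m,1}$ and $\beta_{m,2}$ fall in the correct subintervals of the $s^S$ formula for each admissible $m$; this is where I expect the technical friction.
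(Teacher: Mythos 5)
Your parts (a) and (b) are sound and essentially reproduce the paper's argument: the monotonicity of $m \mapsto s_{3,m}(\alpha)$ (increasing for $\alpha > n/2$) and of $m \mapsto s_{4,m}(\alpha)$ (decreasing), combined with the validity windows of Proposition~\ref{thm:SobolevRegularity}\ref{it:m_small}, reduce each interval $[n-m, n-m+1]$ to the competition between $s_{3,m-1}$ and $s_{4,m}$, and your crossover point and slope comparison are correct. The deduction of \eqref{eq:thm:sS_s} from $m_0 = m_1$ for $n = 4,5,7$ is also exactly the paper's.

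The gap is in the $s_{5,m}$ step of \eqref{eq:thm:sS_maximum_dim_large}. The envelope $s^S$ is \emph{not} concave: on each $[n-m, n-m+1]$ it is the maximum of two linear functions, so it has an upward (convex) kink at every interior crossover $n - m + \frac{n-2m}{n-m}$, and a linear function lying below such a piecewise-linear function at the two endpoints of an interval can still exceed it at an interior convex kink. So your reduction to endpoint checking is invalid. Moreover your left endpoint is wrong: for $m \ge m_0 + 1$ one has $(m-1)\frac{n-m-1}{n-m-3} > m_0$, hence $\beta_{m,1} < n - m_0$, so the relevant interval is $[n - m_0, \beta_{m,2}]$; and at $\alpha = \beta_{m,1}$ the inequality you would check is actually \emph{false}, since there $s_{5,m} = s_{3,m} > s_{3,m_0} = s^S$ by your own monotonicity in (a) --- this is exactly why the intermediate $m$ beat the small ones for $\alpha < \beta_{m_0+1,1}$, and why the claim is restricted to $\alpha \ge n - m_0$. (The same wrong-direction issue undermines your remark that the $s_{3,m}$ branch ``can be handled by bounding it against $s_{3,m_0}$''; the correct observation is that this branch is vacuous on $[n-m_0, n]$.) The repair, which is the paper's route, is to compare the linear function $s_{5,m}$ not with $s^S$ but with the single linear minorant $s_{4,m_0} \le s^S$ of it on $[n-m_0,n]$: one has $s_{5,m}(\beta_{m,2}) = s_{4,m}(\beta_{m,2}) < s_{4,m_0}(\beta_{m,2})$, and the slope of $s_{5,m}$ is no steeper than that of $s_{4,m_0}$ precisely when $-\frac{n-m-2}{2(n-m-1)} \ge -\frac{n-m_0}{2(n-m_0+1)}$, i.e. $m \ge m_0 - 2$, which holds; one must also treat separately the degenerate case $n \in 3\N$, $m = m_0+1$, where $\beta_{m_0+1,2} < n - m_0$ and only the $s_{4,m}$ branch survives on $[n-m_0,n]$.
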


\begin{proof}
Let us prove \eqref{eq:thm:sS} with the aid of Figure~\ref{fig:Proto_Stairs}.
For $0 \le m \le m_0$ we have from \eqref{eq:thm:m_small}
that $s_m(n/2) = s_{3,m}(n/2) = n/4$, and also that
\begin{equation}
\textrm{slope of } s_{3,m}(\alpha) = -\frac{n - m - 1}{2(n - m + 1)},
\end{equation}
which is an increasing function of $m$, that is,
the smaller the $m$, the steeper the slope.
Hence,
\begin{equation} \label{eq:max_a_small}
 l \le m - 1 \quad \Longrightarrow \quad s_{m-1}(\alpha) =  s_{3,m - 1}(\alpha) \ge s_l(\alpha), 
 \qquad n/2 \le \alpha \le n - m + 1.
\end{equation}
On the other hand,
when $l \ge m$ and $n - m \le \alpha \le n$ we have
\begin{equation} \label{eq:max_a_big}
s_m(\alpha) = s_{4,m}(\alpha) \ge s_{4,l}(\alpha) =s_l(\alpha) 
\quad \iff \quad  
\frac{n - m}{n - m + 1} \ge \frac{n - l}{n - l + 1} 
\quad \iff \quad 
l \ge m.
\end{equation}
Together, \eqref{eq:max_a_small} and \eqref{eq:max_a_big} imply
\begin{equation}
s^S(\alpha) = \max\{s_{3,m-1}(\alpha), s_{4,m}(\alpha)\}, \qquad
 n - m \le \alpha \le n - m + 1.
\end{equation}
The last two cases in \eqref{eq:thm:sS} follow. 
The first case follows from \eqref{eq:max_a_small} with $m - 1 = m_0$.

\begin{figure}[h]
\centering
\includegraphics[width=0.64\textwidth]{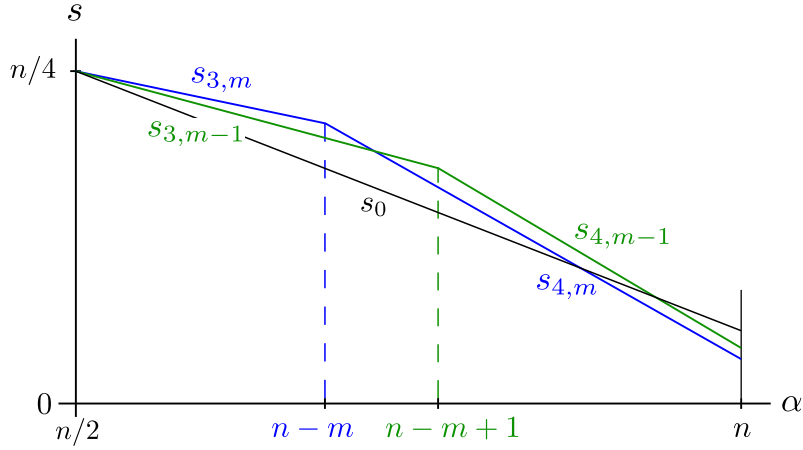}
\caption{Comparison between \textcolor{blue}{$s_m$} 
	and \textcolor{darkGreen}{$s_{m-1}$};
	see Proposition~\ref{thm:SobolevRegularity}\ref{it:m_small}. }
\label{fig:Proto_Stairs}
\end{figure}

To prove \eqref{eq:thm:sS_maximum_dim_large}, 
we need to discard the contribution of $m_0 < m \leq m_1$
in the range $n - m_0 \le \alpha \le n$.
Since $s_{m_0}(\alpha) = s_{4, m_0}(\alpha) \le s^S(\alpha)$ 
in this range of $\alpha$,
then we are done if we can show that $s_m(\alpha) \le s_{4, m_0}(\alpha)$,
where $s_m$ is given by \eqref{eq:thm:m_medium}. 

In the range $\beta_{m, 2} \le \alpha \le n$
we can repeat the analysis in \eqref{eq:max_a_big} to see that
\begin{equation} \label{eq:sS_large_large_dim}
	m_0 + 1 \le m \le m_1 \quad \Longrightarrow \quad s_m(\alpha) = 
		s_{4,m}(\alpha) < s_{4,m_0}(\alpha), \qquad
	 \beta_{m, 2} \le \alpha \le n.
\end{equation}
If $n \in 3\N$ and $m = m_0 + 1$, 
then $\beta_{m_0 + 1,\,2} < n - m_0$ and we are done;
otherwise, we have to consider the interval 
$n - m_0 \leq \alpha \leq \beta_{m,2}$ as well.

Assume that $n \notin 3\N$ or that $m > m_0 + 1$.
In this case, 
\begin{equation} \label{eq:end_point_large}
\beta_{m, 2} = \frac{n + m + 1}{2} \ge n - m_0.
\end{equation}
Since $s_m(\beta_{m, 2}) < s_{4, m_0}(\beta_{m, 2})$
and $s_m(\alpha) = s_{5,m}(\alpha)$ for $n - m_0 \leq \alpha \leq \beta_{m,2}$, 
it suffices to show that 
the slope of $s_{5,m}$ is greater (or less steep) than that of $s_{4,m_0}$, 
which is true because 
\begin{equation}
-\frac{n - m - 2}{2(n - m - 1)} \ge 
	-\frac{n - m_0}{2(n - m_0 + 1)} \iff
m \ge m_0 - 2.
\end{equation}
This concludes the proof of \eqref{eq:thm:sS_maximum_dim_large}.

Finally, \eqref{eq:thm:sS_s} holds because  for $n = 4, 5, 7$
there is no $m$ such that $m_0 < m \le m_1$.
\end{proof}

The next step is to determine 
the maximum regularity among the intermediate $m$.
\begin{lem}\label{thm:Lemma_Intermediate_m}
Let $n = 6$ or $n \geq 8$, 
$m_0 = \floor{(n - 1)/3}$ and $m_1 = \floor{n/2 - 1}$. 
Define $s^I(\alpha) = 
	\max_{m_0 < m \le m_1} s_m(\alpha)$.
Then,
\begin{equation} \label{eq:thm:medium_m_small_n}
s^I(\alpha)  = s_{m_0 + 1}(\alpha), \qquad
\textrm{when } n = 6, 8, 9, 10, 11, 13,
\end{equation}
and if $n=12$ or $n \geq 14$, 
\begin{equation} \label{eq:thm:medium_m}
s^I(\alpha) = \begin{cases}
s_{3, m_1}(\alpha) & 
	\textrm{for } n/2 \le \alpha \le \beta_{m_1, 1}
	\textrm{ and } n \textrm{ odd}, \\
s_{5, m}(\alpha), & 
	\textrm{for } \beta_{m, 1} \le \alpha \le n - m - 1 +3\,\dfrac{n - 2m - 2}{n - m - 4}, \\
s_{3, m - 1}(\alpha), &
	\textrm{for } n - m - 1 +3\,\dfrac{n - 2m - 2}{n - m - 4} \le \alpha \le \beta_{m - 1, 1}, \\
s_{m_0 + 1}(\alpha), &
	\textrm{for } \beta_{m_0 + 1, 1} \le \alpha \le n - m_0,
\end{cases}
\end{equation}
where $m$ ranges from $m_0 + 2$ to $m_1$.

Furthermore,
\begin{equation} \label{eq:thm:medium_dom_dim_small}
s(\alpha) = s^I(\alpha), \qquad
\textrm{for } n/2 \le \alpha \le \beta_{m_0 + 1, 1}.
\end{equation}
\end{lem}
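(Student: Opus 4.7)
When $n \in \{6, 8, 9, 10, 11, 13\}$, a direct check shows that the interval $(m_0, m_1]$ contains exactly one integer, namely $m_0 + 1$, so $s^I(\alpha) = s_{m_0+1}(\alpha)$ by the definition of $s^I$; this gives \eqref{eq:thm:medium_m_small_n}. The substantive content of the lemma is the case $n = 12$ or $n \ge 14$, where the supremum ranges over two or more intermediate values of $m$, and one must identify a ``sawtooth'' maximum built from alternating $s_{5, m}$ and $s_{3, m-1}$ segments.

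\textbf{Monotonicity tools.} The proof rests on three structural observations. First, $s_{3, m}(n/2) = n/4$ for every $m$, so all $s_{3, m}$ curves share the common anchor $(n/2, n/4)$. Second, the slopes $-\tfrac{n-m-1}{2(n-m+1)}$, $-\tfrac{n-m-2}{2(n-m-1)}$ and $-\tfrac{n-m}{2(n-m+1)}$ of $s_{3, m}$, $s_{5, m}$ and $s_{4, m}$ respectively are all increasing (less steep) in $m$. Third, $\beta_{m, 1}$ strictly decreases in $m$, so on the leftmost region $[n/2, \beta_{m_1, 1}]$ every $s_m$ with $m_0 < m \le m_1$ equals its $s_{3, m}$ piece. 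Combined with the shared anchor, this immediately implies that on $[n/2, \beta_{m_1, 1}]$ the maximum is attained at $m = m_1$, yielding the first line of \eqref{eq:thm:medium_m}; this interval is empty when $n$ is even, since then $m_1 = n/2 - 1$ forces $\beta_{m_1, 1} = n/2$, which explains the ``$n$ odd'' restriction.

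\textbf{Piecing the middle.} For each $m$ with $m_0 + 2 \le m \le m_1$, the lines $s_{5, m}$ and $s_{3, m-1}$ intersect at a unique point which a short algebraic manipulation places at $\gamma_m := n - m - 1 + 3\,\tfrac{n - 2m - 2}{n - m - 4}$; one verifies $\beta_{m, 1} \le \gamma_m \le \beta_{m - 1, 1}$ and $\gamma_m \le \beta_{m, 2}$. Since $s_{5, m}$ has a steeper slope than $s_{3, m - 1}$ precisely when $n - m \ge 4$, which holds for all $m \le m_1 \le n/2 - 1$ here, the ordering is $s_{5, m} \ge s_{3, m - 1}$ on $[\beta_{m, 1}, \gamma_m]$ and the reverse on $[\gamma_m, \beta_{m - 1, 1}]$. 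To upgrade this pairwise comparison to the full maximum over all intermediate $m$, the slope monotonicity does the rest: any $m' < m - 1$ yields an $s_{3, m'}$ lying below $s_{3, m - 1}$ on $\alpha \ge n/2$ (shared anchor, steeper slope), and any $m' > m$ is in its $s_{5, m'}$ regime on the window of interest with less steep slope and a smaller breakpoint, so a direct endpoint check gives $s_{5, m} \ge s_{5, m'}$ there. Concatenating the resulting ``roof tiles'' for $m = m_1, m_1 - 1, \ldots, m_0 + 2$, and noting that the remaining interval $[\beta_{m_0 + 1, 1}, n - m_0]$ lies entirely inside the $s_{5, m_0 + 1}$ piece of $s_{m_0 + 1}$, yields the full expression in \eqref{eq:thm:medium_m}.

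\textbf{Identifying the global maximum.} For \eqref{eq:thm:medium_dom_dim_small}, on $[n/2, \beta_{m_0 + 1, 1}]$ one has $s^S = s_{3, m_0}$ by Lemma~\ref{thm:Lemma_Small_m}. Since $s^I \ge s_{m_0 + 1} = s_{3, m_0 + 1}$ throughout this interval, and $s_{3, m_0 + 1}$ shares the anchor $(n/2, n/4)$ with $s_{3, m_0}$ but has a less steep slope, we obtain $s^I \ge s_{3, m_0 + 1} \ge s_{3, m_0} = s^S$, hence $s = \max(s^S, s^I) = s^I$ on this interval. The main obstacle is the careful bookkeeping of the nested intervals $[\beta_{m, 1}, \beta_{m - 1, 1}]$ and of the crossing points $\gamma_m$, together with the verification that no distant $s_{3, m'}$ or $s_{5, m'}$ sneaks above the expected leader on a subinterval; the universal anchor $(n/2, n/4)$ and the slope monotonicity in $m$ are precisely what rules this out.
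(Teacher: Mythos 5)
Your overall strategy coincides with the paper's: the common anchor $s_{3,m}(n/2)=n/4$, the monotonicity in $m$ of the slopes of $s_{3,m}$ and $s_{5,m}$, the pairwise crossing point $\gamma_m$ of $s_{5,m}$ and $s_{3,m-1}$ (your formula for $\gamma_m$ is correct), and the reduction of the global maximum to these pairwise comparisons. The trivial cases \eqref{eq:thm:medium_m_small_n}, the first and the two middle lines of \eqref{eq:thm:medium_m}, and the identity \eqref{eq:thm:medium_dom_dim_small} are all handled correctly and essentially as in the paper.

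There is, however, one genuine gap in the last case of \eqref{eq:thm:medium_m}. You assert that the interval $[\beta_{m_0+1,1},\, n-m_0]$ ``lies entirely inside the $s_{5,m_0+1}$ piece of $s_{m_0+1}$,'' i.e.\ that $\beta_{m_0+1,2}\ge n-m_0$. This is false exactly when $n\in 3\N$: then $m_0=n/3-1$ and $\beta_{m_0+1,2}=(n+m_0+2)/2=2n/3+1/2<2n/3+1=n-m_0$ (for $n=12$, $\beta_{4,2}=8.5<9$). On the sub-interval $[2n/3+1/2,\,2n/3+1]$ one has $s_{m_0+1}=s_{4,m_0+1}$, which is \emph{strictly smaller} than $s_{5,m_0+1}$ there, since $s_{4,m_0+1}$ is the steeper of the two lines and they meet at $\beta_{m_0+1,2}$. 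Hence the $s_5$-monotonicity argument does not yield the last case on that sub-interval, and you must separately prove $s_{4,m_0+1}(\alpha)\ge s_{5,m}(\alpha)$ for $m\ge m_0+2$. By the $s_5$ comparison it suffices to treat $m=m_0+2$, and since both sides are affine it suffices to check the two endpoints: at $\alpha=\beta_{m_0+1,2}$ it follows from $s_{4,m_0+1}=s_{5,m_0+1}\ge s_{5,m_0+2}$, while at $\alpha=2n/3+1$ it requires a genuine computation (for $n=12$ the inequality reads $16/9\ge 7/4$, so it is tight enough that it cannot be waved away). This is precisely the extra step the paper carries out. A secondary, minor omission: to apply the $s_5$ comparison at all on $[\beta_{m_0+1,1},n-m_0]$ you also need $\beta_{m,2}\ge n-m_0$ for every $m\ge m_0+2$, so that $s_m=s_{5,m}$ on the whole interval; this is true (it reduces to $m_0\ge n/3-1$) but should be recorded.
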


\begin{proof}
The identity \eqref{eq:thm:medium_m_small_n} holds because
the interval $m_0 < m \le m_1$ only has one element for those dimensions.

To prove \eqref{eq:thm:medium_m} for $n=12$ or $n \geq 14$,
the analysis is like in Lemma~\ref{thm:Lemma_Small_m}.
Recall that $s_m$ is given by \eqref{eq:thm:m_medium} in this case,
so we have to consider the transition point
\begin{equation}
\beta_{m, 1} = n - m - 1 + 2\frac{n - 2m - 2}{n - m - 3} \in [n - m - 1, n - m).
\end{equation}
Like in \eqref{eq:max_a_small}, we see that
\begin{equation}\label{eq:max_a_intermedium}
l \leq m-1 \quad \Longrightarrow \quad s_{m-1}(\alpha) = s_{3,m-1}(\alpha) \geq s_{3,l}(\alpha) = s_l(\alpha), \qquad n/2 \leq \alpha \leq \beta_{m - 1,1}. 
\end{equation}
On the other hand, when $l\ge m$ and 
$\beta_{m, 1} \le \alpha \le \beta_{m_0 + 1, 1} < \beta_{m, 2}$ we have
\begin{equation} \label{eq:s_5_maximum}
s_m(\alpha) = s_{5,m}(\alpha) \ge s_{5,l}(\alpha) =  s_l(\alpha) \iff
 \frac{n - m - 2}{n - m - 1} \geq \frac{n - l - 2}{n - l - 1} \iff
l \ge m.
\end{equation}
Consequently, 
\begin{equation}
s^I(\alpha) = \max\{s_{3, m-1}(\alpha), s_{5, m}(\alpha)\}, \qquad
 \beta_{m, 1} \le \alpha \le \beta_{m - 1, 1},
\end{equation}
and the two middle cases in \eqref{eq:thm:medium_m} follow.
 
When $n$ is even we get $\beta_{m_1, 1} = n/2$, 
so the computations above cover the whole range $n/2 \le \alpha \le \beta_{m_0 + 1, 1}$. 
When $n$ is odd, though, 
$\beta_{m_1, 1} > n/2$ and the first case in \eqref{eq:thm:medium_m} 
follows from \eqref{eq:max_a_intermedium} by taking $m - 1 = m_1$. 

Now we prove the last case in \eqref{eq:thm:medium_m}, that is,
that $s_{m_0 + 1}(\alpha) \ge s_m(\alpha)$ for $m_0 + 2 \le m \le m_1$
and for $\beta_{m_0 + 1, 1} \le \alpha \le n - m_0$.
From \eqref{eq:thm:m_medium} and \eqref{eq:end_point_large}
we see that $s_m(\alpha) = s_{5, m}(\alpha)$, so
we have to prove $s_{m_0 + 1}(\alpha) \ge s_{5, m}(\alpha)$
for $m \ge m_0 + 2$.

When $n \not\in 3\N$ we have $\beta_{m_0 + 1,\,2} \ge n - m_0$,
so $s_{m_0 + 1}(\alpha) = s_{5, m_0 + 1}(\alpha)$ and
we have to prove $s_{5, m_0 + 1}(\alpha)
\ge s_{5, m}(\alpha)$, but this follows 
like in \eqref{eq:s_5_maximum}.
When $n \in 3\N$, then $m_0 = n/3 - 1$ and
we also have to study the range
\begin{equation}
\beta_{m_0 + 1,\,2} = \frac{n + (m_0 + 1) + 1}{2} = \frac{2n}{3} + \frac{1}{2} \le \alpha \le  \frac{2n}{3} + 1 = n - m_0.
\end{equation}
In this range $s_{m_0 + 1}(\alpha) = s_{4, m_0 + 1}(\alpha)$, so
we must prove $s_{4, m_0 + 1}(\alpha) \ge s_{5, m}(\alpha)$ for $m \ge m_0 + 2$.
For that, it is enough to check $s_{4, m_0 + 1}(\alpha) \ge s_{5, m_0 + 2}(\alpha)$.
Since $s_{m_0 + 1}(\beta_{m_0 + 1,\, 2}) \ge s_{5, m_0 + 2}(\beta_{m_0 + 1,\, 2})$,
we only need to prove the inequality at 
$\alpha = 2n/3 + 1$, 
which follows after algebraic manipulation.

To prove \eqref{eq:thm:medium_dom_dim_small},
by the first case in \eqref{eq:thm:sS}
it is enough to show that 
$s_{3, m_0}(\alpha) < s_{3, m_0 + 1}(\alpha) = s_{m_0 + 1}(\alpha) \le s^I(\alpha)$ 
for $n/2 \le \alpha \le \beta_{m_0 + 1, 1}$.
This follows like in \eqref{eq:max_a_small}, so
we conclude the proof of the lemma.
\end{proof}

After Lemmas~\ref{thm:Lemma_Small_m} and \ref{thm:Lemma_Intermediate_m}, 
it only remains to analyze the range $\beta_{m_0 + 1, 1} \le \alpha \le n - m_0$.
\begin{lem}
Let $m_0 = \floor{(n - 1)/3}$. Then,
\begin{itemize}
	\item When $n = 6$,
\begin{equation} \label{eq:thm:transtion_6}
s(\alpha) = s_{3, m_0}(\alpha), \qquad
\textrm{for } n/2 \le \alpha \le n - m_0.
\end{equation}
\item When $n \ge 8$, 
\begin{equation} \label{eq:thm:transtion_n3}
s(\alpha) = \begin{cases}
s_{5, m_0 + 1}(\alpha), & 
	\qquad \beta_{m_0 + 1, 1} \le \alpha \le n - m_0 - 2 + 3\,\dfrac{n - 2m_0 - 4}{n - m_0 - 5}, \\
s_{3, m_0}(\alpha), & 
	\qquad n - m_0 - 2 + 3\,\dfrac{n - 2m_0 - 4}{n - m_0 - 5} \le \alpha \le n - m_0.
\end{cases}
\end{equation}
\end{itemize}
\end{lem}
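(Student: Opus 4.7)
The plan is to combine the two preceding lemmas with a direct comparison on the remaining range. By Lemma~\ref{thm:Lemma_Small_m}, on $\beta_{m_0+1,1} \le \alpha \le n-m_0$ we have $s^S(\alpha) = s_{3,m_0}(\alpha)$, and by Lemma~\ref{thm:Lemma_Intermediate_m} we have $s^I(\alpha) = s_{m_0+1}(\alpha)$ on the same range; hence $s(\alpha) = \max\{s_{3,m_0}(\alpha),\, s_{m_0+1}(\alpha)\}$ and the task reduces to identifying the larger of the two. Since $(n-1)/3 < m_0+1 \le n/2 - 1$ for every $n$ appearing in the statement, the medium-$m$ formula \eqref{eq:thm:m_medium} applies, so
\begin{equation*}
s_{m_0+1}(\alpha) = \begin{cases} s_{5,m_0+1}(\alpha), & \alpha \in [\beta_{m_0+1,1},\, \beta_{m_0+1,2}], \\ s_{4,m_0+1}(\alpha), & \alpha \in [\beta_{m_0+1,2},\, n], \end{cases}
\end{equation*}
and the second branch enters our range precisely when $\beta_{m_0+1,2} < n-m_0$, which a short check shows is equivalent to $n \in 3\N$.

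The main comparison is $s_{3,m_0}$ versus $s_{5,m_0+1}$ on $[\beta_{m_0+1,1}, \beta_{m_0+1,2}]$. Both are affine in $\alpha$; writing $k = n-m_0$, their slopes are $-(k-1)/(2(k+1))$ and $-(k-3)/(2(k-2))$, and a direct calculation gives
\begin{equation*}
\frac{k-3}{k-2} - \frac{k-1}{k+1} = \frac{k-5}{(k-2)(k+1)},
\end{equation*}
so $s_{5,m_0+1}$ is strictly steeper precisely when $k \ge 6$, i.e. for $n \ge 8$. Setting the two lines equal and solving for $n-\alpha$ yields $(m_0-1)(k-2)/(k-5)$, which after rewriting is exactly the threshold $\alpha^\ast = n - m_0 - 2 + 3(n - 2m_0 - 4)/(n - m_0 - 5)$ of \eqref{eq:thm:transtion_n3}. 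A short case check on $n \bmod 3$ confirms $\alpha^\ast \in [\beta_{m_0+1,1},\, \min\{\beta_{m_0+1,2},\, n-m_0\}]$, and since $s_{5,m_0+1}$ is the steeper line, it dominates $s_{3,m_0}$ on $[\beta_{m_0+1,1}, \alpha^\ast]$ and is dominated on the complement.

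When $n \in 3\N$ one must also compare $s_{3,m_0}$ with $s_{4,m_0+1}$ on $[\beta_{m_0+1,2}, n-m_0]$. Substituting $m_0 = n/3 - 1$ and $\beta_{m_0+1,2} = 2n/3 + 1/2$ shows that both functions equal $n/6$ at $\beta_{m_0+1,2}$, so they meet there, and a direct slope comparison shows $s_{4,m_0+1}$ is steeper, whence $s_{3,m_0}$ wins throughout. Together with the previous paragraph this gives \eqref{eq:thm:transtion_n3}. The case $n = 6$ is exceptional because $k=5$ forces the slopes of $s_{3,m_0}$ and $s_{5,m_0+1}$ to coincide; setting $m_0 = 1$ gives $s_{3,1} \equiv s_{5,2} = 1/2 + (6-\alpha)/3$ on $[3, 9/2]$, and on $[9/2, 5]$ the functions $s_{3,1}$ and $s_{4,2}$ agree at the endpoint with $s_{4,2}$ the steeper one, so $s_{3,1}$ wins, yielding \eqref{eq:thm:transtion_6}.

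The main obstacle is organizational rather than analytic: the winner depends on $n \bmod 3$ (through the position of $\beta_{m_0+1,2}$ relative to $n-m_0$) and on whether $k = 5$ or $k \ge 6$, so the real care is in matching the correct piece of $s_{m_0+1}$ to the correct subinterval of $[\beta_{m_0+1,1}, n-m_0]$ and verifying that $\alpha^\ast$ lies inside the relevant subinterval in each sub-case; no single inequality is deep.
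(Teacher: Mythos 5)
Your argument is correct and takes essentially the same route as the paper: reduce to $s(\alpha) = \max\{s_{3,m_0}(\alpha),\, s_{m_0+1}(\alpha)\}$ on $[\beta_{m_0+1,1},\, n-m_0]$ via the two preceding lemmas, split according to whether $n \in 3\N$ (equivalently, whether $\beta_{m_0+1,2} < n-m_0$), and identify the crossing point of $s_{3,m_0}$ and $s_{5,m_0+1}$ with the stated threshold, treating $n=6$ separately because there the two lines coincide. You are in fact somewhat more explicit than the paper, which asserts that the resulting maxima ``lead to'' \eqref{eq:thm:transtion_n3} without writing out the slope comparisons, the coincidence of the two branches at $\beta_{m_0+1,2}=2n/3+1/2$ when $n\in 3\N$, or the check that the crossing point lies inside the interval.
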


\begin{proof}
From the first case in \eqref{eq:thm:sS} and 
the last case in \eqref{eq:thm:medium_m} we have that
\begin{equation}
s(\alpha) = \max\{s_{3, m_0}(\alpha), s_{m_0 + 1}(\alpha)\}.
\end{equation} 
When $n \notin 3\N$ then \eqref{eq:thm:m_medium} and 
\eqref{eq:end_point_large} imply
that $s_{m_0 + 1}(\alpha) = s_{5, m_0 + 1}(\alpha)$, so
\begin{equation}
s(\alpha) = \max\{s_{3, m_0}(\alpha), s_{5, m_0 + 1}(\alpha)\},
\end{equation}
which is precisely \eqref{eq:thm:transtion_n3}; 
notice that $n - m_0 - 5 > 0$ in this case.
When $n \in 3\N$ and $n \neq 6$, then
$m_0 = n/3 - 1$ and 
\begin{equation}
s(\alpha) = \begin{cases}
\max\{s_{3, m_0}(\alpha), s_{5, m_0 + 1}(\alpha)\}, & 
	\qquad \beta_{m_0 + 1, 1} \le \alpha \le 2n/3 + 1/2, \\
\max\{s_{3, m_0}(\alpha), s_{4, m_0 + 1}(\alpha)\}, &
	\qquad 2n/3 + 1/2 \le \alpha \le 2n/3 + 1,
\end{cases}
\end{equation}
which again leads to \eqref{eq:thm:transtion_n3}; 
notice that
\begin{equation}
	n - m_0 - 2 + 3\,\frac{n - 2m_0 - 4}{n - m_0 - 5} = 
	\beta_{m_0 + 1,\, 2} = \frac{2n}{3} + \frac{1}{2}.
\end{equation}
When $n = 6$, we have $s_{3, m_0}(\alpha) = s_{5, m_0 + 1}(\alpha)$, so
we may choose $s_{3, m_0}$ in the first maximum above to reach \eqref{eq:thm:transtion_6}.
\end{proof}

Gathering the results of this section, we get Theorem~\ref{thm:Main_Theorem}.
\begin{rmk}
Given that $\beta_{m,2}$ plays no role in the final statement of Theorem~\ref{thm:Main_Theorem},
for simplicity we rename $\beta_{m,1}$ as $\beta_m$.
\end{rmk}

\bibliographystyle{acm}
\bibliography{SchrodingerBib}

\end{document}